\documentclass[a4paper, 10pt]{amsart}
\usepackage[top = 1in, bottom = 1in, left=1.2in, right=1.2in]{geometry}
\usepackage{amsthm,amssymb,amsmath,graphicx,mathtools,enumerate,bbm}
\usepackage{aligned-overset}
\usepackage{caption}
\usepackage{subcaption}
\usepackage[shortlabels]{enumitem} 
\usepackage[british]{babel}


\usepackage[mathlines]{lineno}
\usepackage{etoolbox} 

\newcommand*\linenomathpatch[1]{%
	\expandafter\pretocmd\csname #1\endcsname {\linenomath}{}{}%
	\expandafter\pretocmd\csname #1*\endcsname{\linenomath}{}{}%
	\expandafter\apptocmd\csname end#1\endcsname {\endlinenomath}{}{}%
	\expandafter\apptocmd\csname end#1*\endcsname{\endlinenomath}{}{}%
}
\newcommand*\linenomathpatchAMS[1]{%
	\expandafter\pretocmd\csname #1\endcsname {\linenomathAMS}{}{}%
	\expandafter\pretocmd\csname #1*\endcsname{\linenomathAMS}{}{}%
	\expandafter\apptocmd\csname end#1\endcsname {\endlinenomath}{}{}%
	\expandafter\apptocmd\csname end#1*\endcsname{\endlinenomath}{}{}%
}

\expandafter\ifx\linenomath\linenomathWithnumbers
\let\linenomathAMS\linenomathWithnumbers
\patchcmd\linenomathAMS{\advance\postdisplaypenalty\linenopenalty}{}{}{}
\else
\let\linenomathAMS\linenomathNonumbers
\fi

\linenomathpatchAMS{gather}
\linenomathpatchAMS{multline}
\linenomathpatchAMS{align}
\linenomathpatchAMS{alignat}
\linenomathpatchAMS{flalign}
\linenomathpatch{equation}




\usepackage[colorlinks,bookmarks,linkcolor=black,citecolor=black]{hyperref} 




\setlist{leftmargin=*,itemsep=2pt,parsep=1pt,topsep=3pt,partopsep=0pt}  
\setenumerate{leftmargin=*,labelindent=0pt} 


\def\itm#1{\rm ({#1})}
\def\itmit#1{\normalfont (#1)} 
\def\rom{\itmit{\roman{*}}} 
\def\abc{\itmit{\alph{*}}}
\def\arab{\itm{\arabic{*}}} 
\def\itmarab#1{\mbox{\itm{{\it #1\,}\arabic{*}\hspace{.05em}}}}

\def\itmrom#1{\mbox{\itm{{\it #1\,}\roman{*}\hspace{.05em}}}}


\usepackage{tikz}
\usetikzlibrary{positioning}
\usetikzlibrary{backgrounds}
\usetikzlibrary{shapes,decorations,patterns}

\tikzstyle{set} = [minimum size=60,draw=black!80,circle,line width=2,draw opacity=1]
\tikzstyle{white set} = [set, fill=white]
\tikzstyle{blue set}  = [
  postaction={pattern=north east lines, pattern color=blue!60},
  postaction={draw=black!80,circle,line width=2,draw opacity=1},
  fill=blue!30, set]
\tikzstyle{red set} = [set, fill=red!30]
\tikzstyle{vertex} = [draw=black!80,circle,line width=1,fill=black!80]
\tikzstyle{edge} = [on background layer,fill opacity=0.3]
\tikzstyle{blue edge} = [preaction={pattern=north east lines,
  pattern color=blue!60}, blue, edge]
\tikzstyle{red edge} = [red, edge]

\newcommand{\es}{\emptyset}
\newcommand{\eps}{\varepsilon}
\newcommand{\sm}{\setminus}
\renewcommand{\subset}{\subseteq}
\newcommand{\dcup}{\dot\cup}

\theoremstyle{plain}
\newtheorem{theorem}{Theorem}[section]

\newtheorem{corollary}[theorem]{Corollary}
\newtheorem{lemma}[theorem]{Lemma}

\newtheorem{claim}[theorem]{Claim}

\newtheorem{remark}[theorem]{Remark}

\theoremstyle{definition}
\newtheorem{definition}[theorem]{Definition}
\newtheorem{problem}[theorem]{Problem}
\newtheorem{conjecture}[theorem]{Conjecture}

\newcommand{\oldqed}{}
\newcommand{\qedClaim}{\hfill\scalebox{.6}{$\Box$}}
\newenvironment{claimproof}[1][Proof]{
	\renewcommand{\oldqed}{\qedsymbol}
	\renewcommand{\qedsymbol}{\qedClaim}
	\begin{proof}[#1]
	}{
	\end{proof}
	\renewcommand{\qedsymbol}{\oldqed}
} 

\numberwithin{equation}{section}


\def\G{\mathcal{G}}

\def\cP{\mathcal{P}}
\def\T{\mathcal{T}}
\def\V{\mathcal{V}}
\newcommand{\cQ}{\mathcal{Q}}

\newcommand{\Pbri}{P^{\text{Bridge}}}
\renewcommand{\r}{\text{red}}
\renewcommand{\b}{\text{blue}}
\renewcommand{\T}{P}
\newcommand{\exc}{\text{Exc}}
\newcommand{\Pexc}{\T^\text{\exc}}
\newcommand{\Vexc}{V^\text{\exc}}

\newcommand{\Mblow}{\mathcal{M}}

\newcommand{\B}{\mathcal{B}}

\newcommand{\Z}{\mathcal{Z}}
\newcommand{\RdG}{\G}
\newcommand{\Pedge}{P^{\text{Edge}}}

\newcommand{\I}[2]{I_{#1,#2}}
\newcommand{\J}[2]{J_{#1,#2}}

\newcommand{\cI}[2]{\mathcal{I}_{#1,#2}}

\newcommand{\R}{\mathcal{R}}
\newcommand{\cX}{\mathcal{X}}
\newcommand{\cY}{\mathcal{Y}}


\title[Partitioning 2-coloured graphs of min. degree $2n/3 + o(n)$ into monochromatic cycles]{Partitioning a 2-edge-coloured graph of minimum degree $2n/3 + o(n)$ into three monochromatic cycles}
\author[P.~Allen]{Peter Allen}
\author[J.~B\"ottcher]{Julia B\"ottcher}
\author[R.~Lang]{Richard Lang}
\author[J.~Skokan]{Jozef Skokan}
\author[M.~Stein]{Maya Stein}

\address[P.~Allen $\vert$ J.~B\"ottcher $\vert$ J.~Skokan]{London School of Economics, London, WC2A 2AE, UK.}
\email{(p.d.allen$\vert$j.boettcher$\vert$j.skokan)@lse.ac.uk}

\address[R.~Lang]{Institut für Informatik, 69120 Heidelberg, Germany.}
\email{lang@informatik.uni-heidelberg.de}

\address[J.~Skokan]{Department of Mathematics, University of Illinois at Urbana-Champaign,  Urbana, IL 61801, USA.}

\address[M.~Stein]{Departamento de Ingenier\'ia Matem\'atica y Centro de Modelamiento Matemático, Universidad de Chile, Santiago, Chile.}
\email{mstein@dim.uchile.cl}

\thanks{The research leading to these results was partly supported by the DFG grant 450397222 (R.~Lang) and also by Fondecyt Regular Grant 1183080, CONICYT + PIA/Apoyo Basal, C\'odigo AFB170001 (M.~Stein).}


%
%
%

\begin{document}


\begin{abstract}
	Lehel conjectured in the 1970s that every red and blue edge-coloured 
	complete graph can be partitioned into two monochromatic cycles.
	This was confirmed in 2010 by Bessy and Thomass\'e. However, the 
	host graph $G$ does not have to be complete. It it suffices to require 
	that $G$ has minimum degree at least  $3n/4$, where $n$ is the order of
	$G$, as was shown recently by Letzter, confirming a conjecture of 
	Balogh, Bar\'{a}t, Gerbner, Gy\'arf\'as and S\'ark\"ozy. This degree 
	condition is asymptotically tight. 

	Here we continue this line of research, by  proving that for every red 
	and blue  edge-colouring of an $n$-vertex graph of minimum degree at 
	least $2n/3 + o(n)$, there is a partition of the vertex set into three 
	monochromatic cycles. This approximately verifies a conjecture of 
	Pokrovskiy and  is essentially tight.
\end{abstract}
\maketitle

\thispagestyle{empty}

\section{Introduction}
Influentially, Lehel conjectured that every colouring of the edges of the
complete graph~$K_n$ with two colours admits two monochromatic cycles of
distinct colours whose vertex sets partition~$V(K_n)$. Here, an edge, a single
vertex, and the empty set each count as a cycle.  {\L}uczak, R\"odl and
Szemer\'edi~\cite{LRS98} confirmed this conjecture for sufficiently large~$n$
after preliminary work of Gy{\'a}rf{\'a}s~\cite{Gya83}.  Their result was
improved by the first author~\cite{All08}, and finally, Bessy and
Thomass\'e~\cite{BT10} resolved the conjecture for all~$n$.

Since the early 1990s the field of cycle partitioning has evolved in many 
directions. One natural question extensively studied is to what extend 
Lehel's conjecture generalizes to more than two colours. Here, a landmark 
result is due to Erd{\H o}s, Gy{\'a}rf{\'a}s and Pyber~\cite{EGP91}, who 
showed that any $r$-edge-coloured complete graph~$K_n$ admits a 
partition into $ {25r^2 \log r}$ monochromatic cycles and conjectured 
that $r$ cycles suffice. The upper bound was later improved to 
$100 r \log r$ by  Gy{\'a}rf{\'a}s, Ruszink{\'o}, S{\'a}rk{\"o}zy and 
Szemer{\'e}di~\cite{GRSS06}  for large~$n$. Pokrovskiy \cite{Pok14} 
disproved the conjecture of Erd{\H o}s, Gy{\'a}rf{\'a}s and Pyber by 
providing an $r$-edge-colouring of~$K_n$ that needs~$r+1$ cycles for 
every $r\ge 3$ and infinitely many~$n$. So far, it is far from clear 
what is the best possible number of cycles, even for small~$r$, showing 
that this problem is difficult.

Other variants of the problem that received much attention in the past 
decade include the appearance of subgraphs other than cycles, the use 
of colourings with an unbounded number of colours (but local 
restrictions on the colourings), and analogues for hypergraphs, to 
highlight just a few. We refer the reader to the recent survey of 
Gy\'arf\'as~\cite{Gya16} and the references therein for more detail.
Another important direction, which is the variant we are most 
interested in here, concerns the replacement the complete graph~$K_n$ 
by a non-complete host graph. This has been investigated for almost 
complete graphs~\cite{Let20}, complete bipartite graphs~\cite{Hax97,LSS17,LS17}, 
graphs with fixed independence number~\cite{Sar11}, infinite 
graphs~\cite{ESSS17,Rad78} and random graphs~\cite{KMN+18,LL21} among 
others.  We are interested in the problem for graphs of large minimum degree.

The study of minimum degree conditions for spanning substructures has a 
long tradition in extremal graph theory (see for example the survey~\cite{KO14} 
for an overview), and so it is natural to seek a variant of Lehel's conjecture 
for host graphs of large minimum degree. Balogh, Bar\'{a}t, Gerbner, 
Gy\'arf\'as and S\'ark\"ozy~\cite{BBG+14} conjectured the following: 
For any $2$-edge-colouring of the edges of any $n$-vertex graph~$G$ of 
minimum degree~$3n/4$, there are two distinctly coloured monochromatic 
cycles which together partition the vertices of~$G$. Examples show that 
this would be tight. In support of their conjecture, they proved an 
approximate version in which~$G$ has minimum degree~$3n/4+o(n)$ and the 
cycles are allowed to miss~$o(n)$ vertices. DeBiasio and Nelsen~\cite{DN17} 
showed that under this (stronger) degree condition a complete partition is possible.
Finally, Letzter~\cite{Let19} resolved the full conjecture for all sufficiently large~$n$.

Motivated by these advances, Pokrovskiy~\cite{Pok16} conjectured that analogous results are true for graphs of lower minimum degree.
In particular, he conjectured that any 2-edge-coloured graph of minimum degree~$2n/3$ can be partitioned into 3 monochromatic cycles.
The construction in Figure~\ref{fig:sharpness} shows that this bound is essentially tight.
Our main result confirms Pokrovskiy's conjecture approximately.

\begin{figure}
  \begin{center}
    \begin{tikzpicture}[scale=0.8]
      \tikzstyle{set} = [minimum size=60,draw=black!80,circle,line width=2,draw opacity=1]
      \tikzstyle{white set} = [set, fill=white]
      \tikzstyle{blue set}  = [
	postaction={pattern=north east lines, pattern color=blue!60},
	postaction={draw=black!80,circle,line width=2,draw opacity=1},
	fill=blue!30, set]
      \tikzstyle{red set} = [set, fill=red!30]
      \tikzstyle{vertex} = [draw=black!80,circle,line width=1,fill=black!80]
      \tikzstyle{edge} = [on background layer,fill opacity=0.3]
      \tikzstyle{blue edge} = [preaction={pattern=north east lines,
	  pattern color=blue!60}, blue, edge]
      \tikzstyle{red edge} = [red, edge]
      
      \node[white set] (a) at (0,0) {};
      \node[] (aa) at (-3,0) {$m$};
      \node[set,fill opacity=0.3, fill=red] (a) at (0,0) {};
      \node[preaction={pattern=north east lines,
	  pattern color=blue!60}, edge, set,fill opacity=0.3, fill=blue] (a) at (0,0) {};
 
      \node[blue set] (b) at (4,0) {};
      \node[] (bb) at (7,0) {$m+2$};
      \node[red set] (c) at (0,4) {};
      \node[] (cc) at (-3,4) {$m+2$};
      \node[vertex, label={$a$}] (v) at (3.5,4.5) {};
      \node[vertex, label={$b$}] (w) at (4.5,3.5) {};
      \node (K) at (-1.5,5) {$K$};
      \node (K') at (5.5,-1) {$K'$};
      \node (K'') at (-1.5,-1) {$K''$};
      
      \begin{scope}[on background layer]
	\fill[blue edge] (v) to (a.320) -- (a.120) -- (v) -- cycle;
	\fill[blue edge] (v) to (c.80) -- (c.300) -- (v) -- cycle;
        
	\fill[red edge] (w) to (a.320) -- (a.120) -- (w) -- cycle;
	\fill[red edge] (w) to (b.150) -- (b.10) -- (w) -- cycle;
        
	\fill[red edge] (a.north) to (b.north) -- (b.south) -- (a.south) -- (a.north) -- cycle;
	\fill[blue edge] (a.west) to (c.west) -- (c.east) -- (a.east) -- (a.west) -- cycle;
      \end{scope}
    \end{tikzpicture}
  \end{center}
  \caption{A graph on $3m+6$ vertices, consisting of three vertex disjoint
    cliques $K$, $K'$ and $K''$ and two additional vertices~$a$ and~$b$. Here,
    $K$ and $K'$ have $m+2$ vertices each, all edges of~$K$ are coloured red and
    all edges of~$K'$ are coloured blue. The third clique~$K''$ is of size~$m$
    and its edges can be coloured either red or blue. Further, all edges
    between~$K$ and~$K''$ are present and coloured blue and all edges
    between~$K'$ and~$K''$ are present and coloured red. The vertex~$a$ has blue
    edges to $K\cup K''$ and the vertex~$b$ has red edges to $K'\cup K''$.}
  \label{fig:sharpness}
\end{figure}
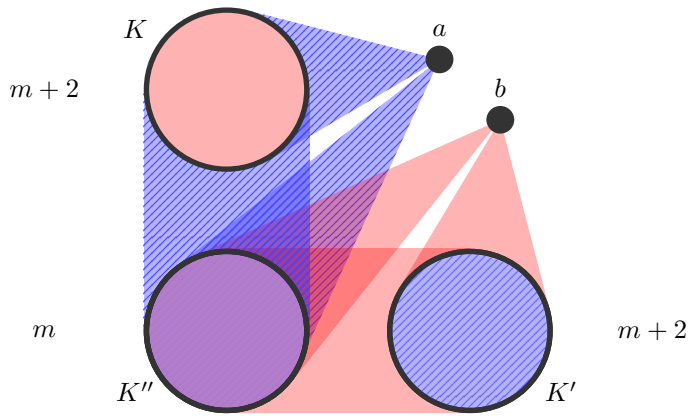

\begin{theorem}\label{thm:main}
  For any~$\beta > 0$, there is~$n_0$ such that for any colouring with
  two colours of the edges of  any  graph $G$  on $n\geq n_0$ vertices 
  and minimum degree at least $(2/3+\beta)n$, there is a partition of 
  $V(G)$ into three monochromatic cycles.
\end{theorem}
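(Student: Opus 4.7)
The plan is to follow the now-standard regularity framework for monochromatic cycle partitioning: reduce the problem to a structural statement about a 2-edge-coloured reduced graph, prove that statement, and then lift the result back to $G$ via a blow-up argument plus an absorption step. First, I would apply Szemer\'edi's regularity lemma to $G$ at scale $\eps \ll \beta$ with density cutoff $d = \beta/10$, and form the 2-coloured reduced graph $R$ on $k$ clusters by assigning each regular pair the majority colour of its edges. Standard computations give $\delta(R) \geq (2/3 + \beta/2)k$.

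The central combinatorial task is then to show that any such $R$ admits three vertex-disjoint monochromatic \emph{connected matchings} $M_1, M_2, M_3$, where each $M_i$ is a matching inside a single monochromatic connected component $C_i$ of $R$, together covering all but at most $\eps k$ vertices. Given this, a routine regularity/blow-up argument converts each $M_i$ into a monochromatic cycle in $G$: traverse each matching edge inside a super-regular pair, and connect consecutive matching edges by short monochromatic paths inside $C_i$. A final absorption step handles the remaining $o(n)$ vertices: the minimum degree condition guarantees that each leftover vertex $v$ meets $\Omega(n)$ vertices on some monochromatic cycle $C$, hence $\Omega(n)$ consecutive pairs of $C$ whose two endpoints are both joined to $v$ by edges of the colour of $C$, which allows us to locally reroute $C$ through $v$.

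The main obstacle is the structural lemma in the reduced graph, which must accommodate the extremal example of Figure~\ref{fig:sharpness}: there, the reduced graph has three nearly-equal monochromatic ``blocks'' that cannot be covered by fewer than three monochromatic matchings and genuinely require two colours. I would split the analysis according to the monochromatic component structure of $R$. In the \emph{connected regime}, one colour (say red) has a spanning connected component, and a Pokrovskiy-style alternating-path argument inside it produces two red connected matchings covering most of $R$, while a third blue matching mops up what remains. In the \emph{split regime}, close to the extremal configuration, $R$ decomposes into three nearly-equal blocks each lying in a distinct monochromatic component, and each block contributes its own cycle after a rebalancing step that redistributes a few vertices between blocks using cross-colour edges. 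The delicate part is to control parity, divisibility, and connectivity uniformly across both regimes --- in particular, ensuring that each $M_i$ really lives in a monochromatic component large enough to be closed into a cycle, and that the three matchings can be chosen simultaneously with matched vertex sets of the correct sizes. This forces a refined taxonomy of auxiliary paths (bridge, connecting, balancing, shifting, parity), hinted at by the paper's notation, and the careful case analysis needed to find all three matchings in the near-extremal regime is where I expect the proof to spend most of its effort.
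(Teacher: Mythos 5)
Your high-level architecture (regularity, a structural lemma in the reduced graph, a dichotomy between a connected regime and a near-extremal regime) is in the right spirit, but there is a concrete gap in the lifting step. Your absorption argument claims that the minimum degree condition guarantees each leftover vertex $v$ has $\Omega(n)$ neighbours on some monochromatic cycle $C$ joined to $v$ \emph{by edges of the colour of $C$}. The degree condition gives no such thing: $v$ has $(2/3+\beta)n$ neighbours in total, but all of its edges to $V(C)$ may be of the opposite colour to $C$. This is exactly what happens in the construction of Figure~\ref{fig:sharpness}: a vertex of $K''$ sees the red clique $K$ only through blue edges and the blue clique $K'$ only through red edges, so it cannot be rerouted into a red cycle living in $K$ nor into a blue cycle living in $K'$. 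Whether a vertex can be absorbed into a given cycle depends on which monochromatic components its red and blue neighbourhoods meet, and controlling this is a substantial part of the work: the paper handles it \emph{before} building the long cycles, by grouping the exceptional vertices according to their coloured degrees into the intersection of components, covering them with short paths via a P\'osa-type cycle cover of an auxiliary graph, and threading those paths into the three cycles (Lemma~\ref{lem:distribute-exceptional-vertices}); an end-of-proof absorption as you describe it would fail. Relatedly, the paper avoids absorption altogether by working with perfect $2$-matchings of a robustly Tutte union of components and a single application of the blow-up lemma to get an exactly spanning structure, rather than connected matchings covering all but $\eps k$ clusters.

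The second issue is that the structural lemma, which you correctly identify as the heart of the matter, is asserted rather than argued, and the extremal configurations you would have to fight are not the one you name. Figure~\ref{fig:sharpness} explains why three cycles are necessary, but the configurations that obstruct the generic argument in the reduced graph are those of Definition~\ref{def:extremal-colouring}: a spanning bipartite red component whose two sides are exactly the two (bipartite) blue components, and a $2\times 2$ grid of two red and two blue components. In these cases no choice of three components yields the robust expansion (equivalently, no suitable connected matchings exist), and one needs either the existence of ``bridge'' vertices joining two same-coloured components of $G$ itself, or a bespoke bare-hands construction in $G$ (Lemma~\ref{lem:extremal-colouring}). Your ``split regime'' gestures at this, but without identifying these structures and the bridge dichotomy the case analysis cannot be carried out, and the parity/connectivity bookkeeping you defer (odd cycles in components, the possibility that all three components are bipartite forcing one cycle to be trivial) is load-bearing rather than routine.
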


The proof of this theorem is partly based on tried and tested techniques such as
graph regularity, the use of matchings, and absorption, but also draws from
several new ideas which could prove useful for future research in this area.  For
an outline of our approach, see Section~\ref{sec:methods}.

\paragraph{Organization.}

The remainder of this paper is organized as follows.  In the next section we
outline our proof method.  In Section~\ref{sec:preliminaries} we then introduce
some notation and concepts related to the regularity method.  In
Section~\ref{sec:big-proof} we present the proof of Theorem~\ref{thm:main} 
alongside the main lemmas needed for this proof.
Sections~\ref{sec:component-lemma},~\ref{sec:distribute-exceptional-vertices},~\ref{sec:balancing}
and~\ref{sec:extremal-colourings} are dedicated to the Lemmas concerning finding
the components, distributing the exceptional vertices and solving the cases of
extremal colourings, respectively. Finally, we offer some open problems and
conjectures in Section~\ref{sec:remarks}.

\section{Proof outline}\label{sec:methods}
In this section we shall briefly outline our proof, which was inspired by a method
developed by Garbe, Lang, Lo, Mycroft and Sanhueza-Matamala~\cite{GLL+22} to solve cycle
partitioning problems in hypergraphs.

Suppose that $G$ is an $n$-vertex graph of minimum degree at least $2n/3 + o(n)$
and that each of its edges is coloured red or blue. We start by applying the
regularity lemma (Lemma~\ref{lem:regularity}) to obtain a regular partition and
the corresponding reduced graph~$\RdG$.  Roughly speaking, our strategy is to
distinguish the case that~$\RdG$ is close to an extremal configuration, which
can be solved with arguments tailored to this particular structure (see
Lemma~\ref{lem:extremal-colouring}), or we can find three monochromatic
components~$C_1$,~$C_2$,~$C_3$ in~$\RdG$ whose union~$C$ has the
following property (see Lemma~\ref{lem:components}). Every stable set~$S$ in~$C$
has linearly many more neighbours in~$\RdG$ than~$|S|$ (see
Definition~\ref{def:robustMatching}).  From the LP duality of maximum fractional
matchings and minimum vertex covers we obtain that~$C$ admits a
\emph{perfect $2$-matching} in a robust way, where a perfect $2$-matching is a
spanning union of vertex disjoint edges and cycles (see
Theorem~\ref{thm:tutte-2-matching} and the discussion preceding~it).

As is canonical, we then connect the clusters of the components in the 
perfect $2$-matching by short monochromatic paths (see
Lemma~\ref{lem:connecting-path}), which we can further require to contain
certain \emph{exceptional vertices} (see
Lemma~\ref{lem:distribute-exceptional-vertices}). Creating these paths will
create some imbalances between the clusters of the matching edges, which 
we correct by solving a weighted matching problem in an auxiliary graph 
and using the robustness of the perfect $2$-matching (see 
Lemma~\ref{lem:balancing}). To finish the proof, we apply the blow-up 
lemma (Lemma~\ref{lem:blowup}) to find monochromatic spanning paths in 
each of the components of the perfect $2$-matching. Together with the 
short paths constructed earlier this yields the desired cycle partition.

We remark that our approach has the advantage that it avoids a number 
of technicalities from earlier approaches, and allows us to isolate the 
main steps (finding the components, distributing exceptional vertices 
and solving extremal configurations) cleanly in separate lemmas.

\section{Preliminaries}\label{sec:preliminaries}
In this section we introduce some notation and tools, which we will need for the
proof of Theorem~\ref{thm:main}.

\subsection{Notation}
We let $[n]$ denote the set $\{1,2,\dots,n\}$.
Let $G=(V,E)$ be a graph.
The \emph{order} of $G$ is $|V(G)|$ and the \emph{size} of $G$ is $|E(G)|$.
We denote the \emph{neighbourhood} of a vertex $v$ by $N_G(v)$ and write $N_G(v,W) = N_G(v) \cap W$ for a set of vertices $W \subset V(G)$.
We denote the \emph{degree} of $v$ by $\deg_G(v) = |N_G(v)|$ and set $\deg_G(v,W) = |N_G(v) \cap W|$.
For a set of vertices $S \subset G$ we write $N_G(S) = \bigcup_{s \in S} N(s)$.
If it is clear from the context, we often drop the index $G$. We write $G[W]$
for the subgraph of~$G$ induced by the vertices in~$W$ and for a set~$U$
disjoint from~$W$ we write $G[W,U]$ for the bipartite subgraph of~$G$ containing
all edges in $E(G)$ with one end in~$U$ and one in~$W$.
For another graph~$H$ we denote by $G\cup H$ the graph on vertex set $V(G) \cup
	V(H)$ and edge set $E(G) \cup E(H)$. We write $H\subset G$ if~$H$ is a subgraph of~$G$.
A \emph{stable set} in~$G$ is a set of vertices $S\subset V(G)$ such that there
are no edges with both endpoints in~$S$.
For vertices~$v$ and~$w$, a $v,w$-\emph{path (walk)} is a path (walk) that
starts in $v$ and ends in $w$. Similarly, for vertex sets~$W$ and~$W'$, a
$W,W'$-path (walk) is a path (walk) that starts in~$W$ and ends in~$W'$.

Suppose that $G$ is a graph whose edges are coloured with red and blue.
We denote by $G_\r$ ($G_\b$) the subgraph on $V(G)$ that contains the red (blue) edges.
For a vertex~$v$ of~$G$ we also write $N_\r(v)$, $N_\b(v)$, $\deg_\r(v)$,
$\deg_\b(v)$ for $N_{G_\r}(v)$, $N_{G_\b}(v)$, $\deg_{G_\r}(v)$,
$\deg_{G_\b}(v)$ for the neighbourhood/degree of $v$ in the respective colour.
We will say that $w \in N_\r(v)$ is a \emph{red neighbour} of $v$ and likewise for colour blue.
A \emph{red (blue) component} of $G$ is a component of~$G_\r$~($G_\b$).
In particular a vertex $v$ with $\deg_\r(v) = 0$ is in a red component of order 1.
For convenience of notation, we also count the empty graph as a component.

In our coloured graphs we do occasionally allow parallel edges (more precisely,
this is the case for our so-called reduced graphs) -- however, there will always
be at most one red edge and at most one blue edge between any pair of vertices.
For such a multi-graph~$G$ we denote by $\delta(G)$ the minimum degree of the
underlying simple graph, and by $\deg_G(u)$ the degree of~$u$ in the underlying
simple graph.

\subsection{Perfect 2-matchings}

We make use of the concept of a $2$-matching in a graph $G$.
\begin{definition}[Perfect $2$-matching]
	A \emph{perfect $2$-matching} of a graph $G$ is a spanning subgraph of $G$ which is the disjoint union of cycles and single edges.
\end{definition}

The next theorem is a convenient analogue of Tutte's classical characterization
of perfect matchings for $2$-matchings. It follows easily from the fact that
maximum fractional matching and minimum vertex cover are LP dual problems both
of which have optimal solutions which are half-integral, and the LP-duality
theorem. For a proof of this result see, e.g., \cite[Corollary~30.1a]{Sch03}. 

\begin{theorem}[Tutte's Theorem for $2$-matchings]\label{thm:tutte-2-matching}
	A graph $G$ has a perfect $2$-matching if and only if every stable set $S \subset V(G)$ satisfies $|N(S)| \geq |S|$.
\end{theorem}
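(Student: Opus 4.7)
The plan is to deduce this theorem from LP duality, as hinted in the text. First I would establish the bijection between perfect $2$-matchings of $G$ and $\{0,1/2,1\}$-valued fractional perfect matchings, i.e.\ vectors $x \in \{0,1/2,1\}^{E(G)}$ with $\sum_{e \ni v} x_e = 1$ for every vertex $v$. Given a perfect $2$-matching, assign weight $1$ to each single edge and weight $1/2$ to each cycle edge; every vertex then receives total weight exactly $1$. Conversely, the weight-$1$ edges of such an $x$ form a matching, while the weight-$1/2$ edges form a $2$-regular subgraph, hence a disjoint union of cycles, and their union is a perfect $2$-matching.

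Second, I would invoke the half-integrality of the fractional matching polytope $P = \{x \ge 0 : \sum_{e \ni v} x_e \le 1 \text{ for all } v \in V(G)\}$, a classical result of Balinski ensuring every vertex of $P$ has coordinates in $\{0, 1/2, 1\}$. Consequently, $G$ admits a perfect $2$-matching if and only if $\max\{\sum_e x_e : x \in P\} = n/2$, where $n = |V(G)|$. The LP dual is the fractional vertex cover LP $\min\{\sum_v y_v : y \ge 0,\ y_u + y_v \ge 1 \text{ for all } uv \in E(G)\}$, and by strong duality the same equivalence holds with the minimum of this dual in place of the maximum.

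To finish, I would translate the vertex cover condition into the stable set condition. If a stable set $S$ violates the hypothesis, i.e.\ $|N(S)| < |S|$, then setting $y_v = 0$ for $v \in S$, $y_v = 1$ for $v \in N(S)$, and $y_v = 1/2$ elsewhere defines a valid fractional vertex cover (the only edges to check are those with one endpoint outside $S$, where both coordinates are at least $1/2$, or with one endpoint in $S$, forcing the other into $N(S)$ with coordinate $1$) of cost $|N(S)| + (n - |S| - |N(S)|)/2 = (n + |N(S)| - |S|)/2 < n/2$, so $G$ has no perfect $2$-matching. Conversely, if $G$ has no perfect $2$-matching, the dual LP has a half-integral optimum $y$ of cost $<n/2$, using half-integrality of the vertex cover polytope. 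Partitioning $V(G)$ as $A_0 \cup A_{1/2} \cup A_1$ by $y$-value, the constraints $y_u + y_v \ge 1$ force $A_0$ to be stable with $N(A_0) \subseteq A_1$, while the cost bound $|A_{1/2}|/2 + |A_1| < (|A_0| + |A_{1/2}| + |A_1|)/2$ simplifies to $|A_1| < |A_0|$, producing a stable set $A_0$ with $|N(A_0)| < |A_0|$.

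The main subtlety is the appeal to the half-integrality of the matching and vertex cover polytopes; once these are granted, the remainder is a clean translation between fractional covers and stable sets via LP duality.
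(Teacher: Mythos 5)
Your proof is correct and follows exactly the route the paper indicates: the paper does not prove this theorem but cites it (Schrijver, Corollary~30.1a) with the one-line justification that maximum fractional matching and minimum fractional vertex cover are LP duals with half-integral optima, which is precisely the argument you flesh out. The half-integrality facts you invoke are the same classical ingredients the paper assumes, and your translation between half-integral covers and deficient stable sets is sound.
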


In order to apply this in our proof, we will need a more robust version of the condition $|N(S)| \geq |S|$. This motivates the following definition.

\begin{definition}[$\gamma$-robust Tutte]
	\label{def:robustMatching}
	We say that a graph~$G$ on~$n$ vertices is \emph{$\gamma$-robust Tutte} if
	for every non-empty stable subset $S$ of $V(G)$ we have $|N(S)| \geq |S| + \gamma n$.
\end{definition}

\subsection{Hamiltonicity}

A \emph{Hamilton cycle} (\emph{Hamilton path}) in a graph~$G$ is a cycle (path)
that covers all the vertices of~$G$.
Chv\'atal's theorem provides a degree sequence criterium for the existence of Hamilton cycles.

\begin{theorem}[Chv\'atal~\cite{Chvatal}]\label{thm:chvatal}
	Let $G$ be a graph with vertex degree sequence $d_1 \leq\dots\leq d_n$.
	If for every $1 \leq i<n/2$ we have $d_i\geq i+1$ or $d_{n-i}\geq n-i$, then
	$G$ has a Hamilton cycle.
\end{theorem}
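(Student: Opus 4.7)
The plan is to follow Chv\'atal's original argument by contradiction. Suppose $G$ satisfies the hypothesis but is not Hamiltonian. Since adding edges can only increase each $d_i$, the hypothesis is preserved under edge-addition, so I may assume $G$ is edge-maximal non-Hamiltonian. I then pick a non-adjacent pair $u,v$ maximising $\deg(u)+\deg(v)$, and set $h := \deg(u) \leq \deg(v)$. Edge-maximality forces $G+uv$ to be Hamiltonian, and any Hamilton cycle in $G+uv$ must use $uv$, so $G$ contains a Hamilton $u,v$-path $v_1 v_2 \cdots v_n$ with $v_1=u$ and $v_n=v$.

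The heart of the argument is a standard rotation: if $uv_{i+1} \in E$ and $vv_i \in E$ for some $i$, then $v_1 v_{i+1} v_{i+2} \cdots v_n v_i v_{i-1} \cdots v_2 v_1$ is a Hamilton cycle in $G$, a contradiction. Encoding the indices of neighbours of $u$ and of $v$ on the path as disjoint subsets of $\{1,\dots,n-1\}$, one obtains both $\deg(u)+\deg(v)\leq n-1$ (and hence $h<n/2$) and the statement that every predecessor on the path of a neighbour of $u$ is a non-neighbour of $v$. By the extremal choice of the pair $u,v$, every non-neighbour of $v$ distinct from $v$ has degree at most $\deg(u) = h$, producing $h$ vertices of degree at most $h$ and hence $d_h \leq h$.

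To conclude, I invoke the Chv\'atal condition at $i=h$. Since $d_h \leq h$ rules out the first alternative, I must have $d_{n-h} \geq n-h$, giving at least $h+1$ vertices of degree at least $n-h$. Such a vertex cannot equal $u$ (since $\deg(u)=h<n-h$), and cannot be a non-neighbour of $u$ either, since the extremal choice would then force its degree to be at most $\deg(v) \leq n-1-h < n-h$. So all $h+1$ of these vertices must be neighbours of $u$, contradicting $|N(u)|=h$. The only mildly delicate part is the rotation bookkeeping in the second paragraph; no regularity, absorption, or other machinery from the rest of the paper is required for this classical result.
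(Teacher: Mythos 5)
Your proof is correct: it is the classical argument for Chv\'atal's theorem via an edge-maximal counterexample, the rotation trick on a Hamilton $u,v$-path, and the extremal choice of the non-adjacent pair, and all the bookkeeping (the disjoint index sets giving $\deg(u)+\deg(v)\le n-1$, hence $h<n/2$; the $h$ non-neighbours of $v$ of degree at most $h$ forcing $d_h\le h$; and the $h+1$ vertices of degree at least $n-h$ that must all lie in $N(u)$) checks out. Note, however, that the paper does not prove this statement at all --- it is quoted as Theorem~\ref{thm:chvatal} with a citation to Chv\'atal's original paper --- so there is no in-paper argument to compare against; what you have written is a faithful reconstruction of the standard published proof, and you are right that none of the paper's machinery is relevant here. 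The only cosmetic caveat is the implicit assumption $n\ge 3$ (so that an edge-maximal non-Hamiltonian graph is not complete), which is the usual convention for this theorem.
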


As a corollary we get the following version for balanced bipartite graphs, which we shall use in our proof.

\begin{corollary}[Bipartite Chv\'atal]\label{cor:bipartite-chvatal}
	Let~$H$ be a bipartite graph with bipartition classes~$X$ and~$Y$, each of size $n$.
	Let $X$ have vertex degree sequence $x_1 \leq\dots\leq x_n$ and $Y$ have vertex
	degree sequence $y_1 \leq\dots\leq y_n$. If for every $i \in [n-1]$ we have
	$x_i\geq i+1$ or $y_{n-i}\geq n-i+1$, then $H$ is Hamiltonian.
\end{corollary}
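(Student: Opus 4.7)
The plan is to adapt the proof of Theorem~\ref{thm:chvatal} to the bipartite setting directly; the combined degree sequence of $H$ has all degrees bounded by $n$, so applying Chv\'atal's condition as a black box would force $d_{n-1}\geq n$, which is too strong, and a simple reduction to Theorem~\ref{thm:chvatal} does not seem available. Instead, assume for contradiction that $H$ satisfies the hypothesis but is not Hamiltonian, and pass to an edge-maximal such bipartite graph: $H$ is not Hamiltonian, but $H+xy$ is Hamiltonian for every $x\in X$, $y\in Y$ with $xy\notin E(H)$. Since adding edges only increases degrees, the hypothesis is preserved by this reduction.

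Pick a non-adjacent pair $u\in X$, $v\in Y$ maximizing $\deg(u)+\deg(v)$, and let $k=\deg(u)$. The hypothesis applied with $i=1$ and $i=n-1$ rules out isolated vertices in either part, so $1\leq k\leq n-1$. Since $H+uv$ is Hamiltonian but $H$ is not, there is a Hamilton $u,v$-path $P=v_1v_2\cdots v_{2n}$ in $H$ with $v_1=u$, $v_{2n}=v$ and odd-indexed vertices in $X$. The standard rotation observation is that if $uv_j\in E(H)$ with $j$ even, then $vv_{j-1}\notin E(H)$, since otherwise
\[
u,\,v_j,\,v_{j+1},\,\ldots,\,v_{2n},\,v_{j-1},\,v_{j-2},\,\ldots,\,v_2,\,v_1=u
\]
would be a Hamilton cycle in $H$. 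This yields $k$ distinct non-neighbours $v_{j-1}$ of $v$ in $X$, and hence $\deg(v)\leq n-k$.

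By the maximality of $\deg(u)+\deg(v)$ among non-adjacent pairs, every non-neighbour of $v$ in $X$ has degree at most $k$, and every non-neighbour of $u$ in $Y$ has degree at most $\deg(v)\leq n-k$. Since $v$ has at least $n-\deg(v)\geq k$ non-neighbours in $X$, this gives $x_k\leq k$, so the hypothesis forces $y_{n-k}\geq n-k+1$. But $u$ has exactly $n-k$ non-neighbours in $Y$, each of degree at most $n-k$, giving $y_{n-k}\leq n-k$, a contradiction. The main technical hurdle I expect is carrying out the rotation so that the ``blocked'' vertices $v_{j-1}$ are accounted for correctly (in particular the case $j=2$, where $v_{j-1}=u$ itself, requires care); once the bound $\deg(u)+\deg(v)\leq n$ is established cleanly, the two halves of the hypothesis become incompatible and the contradiction is immediate.
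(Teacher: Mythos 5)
Your proof is correct, but it takes a genuinely different route from the paper. The paper does find the ``simple reduction'' to Theorem~\ref{thm:chvatal} that you dismissed: it does not apply Chv\'atal's condition to $H$ itself (which, as you rightly observe, would demand $d_{n-1}\ge n$), but first adds a complete graph on $Y$, so that every $Y$-degree jumps by $n-1$ and the combined degree sequence of the new graph $H'$ splits cleanly into the $X$-degrees followed by the shifted $Y$-degrees; the bipartite hypothesis then translates exactly into Chv\'atal's condition for $H'$, and since $|X|=|Y|$ a Hamilton cycle of $H'$ cannot afford to use any of the added $Y$--$Y$ edges, so it lives in $H$. Your argument instead redoes the closure-plus-rotation proof from scratch in the bipartite setting: edge-maximal counterexample, a non-adjacent pair $u\in X$, $v\in Y$ of maximum degree sum, a Hamilton $u,v$-path, and the rotation showing $\deg(u)+\deg(v)\le n$, after which $x_k\le k$ and $y_{n-k}\le n-k$ contradict the hypothesis. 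All the steps check out, including the case $j=2$ you flag (there $v_{j-1}=u$, which is indeed a non-neighbour of $v$ in $X$, so it still counts towards the $k$ non-neighbours); the only degenerate case either proof silently ignores is $n=1$. What each approach buys: the paper's reduction is three lines given Theorem~\ref{thm:chvatal} as a black box, while yours is self-contained, does not need the one-sided completion trick, and makes the extremal structure (the pair $u,v$ witnessing failure) explicit.
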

\begin{proof}
	Obtain $H'$ from $H$ by adding edges between vertices of $Y$ until $H'[Y]$ is complete.
	So every vertex in $Y$ gains $n-1$ new neighbours.
	Let $d_1, \ldots, d_{2n}$ be the degree sequence of $H'$.
	By assumption we have either $x_{n-1}\ge n$ or $y_1\ge 2$. In either case, $y_i \geq 2$ for all $i\in[n]$.
	Hence, for each $i\in[n]$ we have $d_i = x_i$ and $d_{2n-i+1} = y_{n-i+1}+n-1$.
	It follows that $d_i \geq i+1$ or $d_{2n-i} \geq 2n-i$ for $i \in [n-1]$.
	By Theorem~\ref{thm:chvatal}, $H'$ has a Hamilton cycle $C$, which,
	as $|X| =|Y|$, has no edges within $Y$.
	Thus $C$ is a  Hamilton cycle of~$H$.
\end{proof}

This also implies the following result on Hamilton paths.

\begin{corollary}\label{cor:bip-path}
	Let~$H$ be a bipartite graph with bipartition classes $X_1$, $X_2$,
	each of size~$n$. If
	for each $i\in\{1,2\}$, and for each $u\in X_i$ we have
	$\deg_H(u,X_{3-i})\ge\frac12n+1$, then for any  $w\in X_1$, $w'\in X_2$
	there is a Hamilton $w,w'$-path in~$H$.
\end{corollary}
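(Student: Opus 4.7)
The plan is to reduce to Corollary~\ref{cor:bipartite-chvatal} via a two-vertex gadget that forces the desired $w,w'$-path to sit inside a Hamilton cycle. I would construct $H'$ from $H$ by adding a new vertex $z_1$ to $X_1$ and a new vertex $z_2$ to $X_2$, together with exactly the three new edges $wz_2$, $z_1z_2$ and $z_1w'$. Both $z_1$ and $z_2$ then have degree exactly $2$ in $H'$, so any Hamilton cycle of $H'$ must contain all three new edges, hence the sub-path $w{-}z_2{-}z_1{-}w'$; deleting $z_1$ and $z_2$ from such a cycle yields the desired Hamilton $w,w'$-path in $H$.

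To produce a Hamilton cycle in $H'$, I would apply Corollary~\ref{cor:bipartite-chvatal} with parts of size $n+1$. Write $x_1'\le\dots\le x_{n+1}'$ and $y_1'\le\dots\le y_{n+1}'$ for the sorted degree sequences of the two parts of $H'$. Then $x_1'=y_1'=2$ (attained at the gadget vertices), while $x_i',y_i'\ge n/2+1$ for $i\ge 2$, since by hypothesis every vertex of $H$ has degree at least $n/2+1$ into the opposite part, and the gadget only raises the degrees of $w$ and $w'$. It remains to verify that for each $i\in[n]$ either $x_i'\ge i+1$ or $y_{n+1-i}'\ge n-i+2$. This splits into three easy ranges: $i=1$ uses $x_1'=2=i+1$; for $2\le i\le n/2$ we use $x_i'\ge n/2+1\ge i+1$; for $n/2<i\le n-1$ we have $n+1-i\ge 2$, so $y_{n+1-i}'\ge n/2+1\ge n-i+2$; finally $i=n$ uses $y_1'=2=n-i+2$. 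Hence $H'$ is Hamiltonian, which completes the reduction.

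The only points requiring care are the bookkeeping across the interface $i\approx n/2$ in the Chv\'atal condition and the observation that two auxiliary vertices are needed rather than one (a single new vertex cannot be made adjacent to both $w\in X_1$ and $w'\in X_2$ while preserving bipartiteness). Beyond these, no substantive obstacle is expected.
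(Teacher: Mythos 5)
Your proposal is correct and is essentially the paper's own proof: the same two-vertex gadget attaching the path $w,z_2,z_1,w'$ is used, followed by an application of Corollary~\ref{cor:bipartite-chvatal}, whose hypotheses the paper simply declares "easy to verify" while you check them explicitly (with the boundary case $i=\lceil n/2\rceil$ for odd $n$ saved by integrality of degrees). No further comment is needed.
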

\begin{proof}
	Obtain the
	graph~$H'$ from $H$ by adding a new vertex~$v$ to~$X_1$, a new vertex~$v'$ to~$X_2$
	and the edges $wv',v'v,vw'$. It is easy to verify that the conditions of
	Corollary~\ref{cor:bipartite-chvatal} are satisfied, which gives a
	Hamilton cycle in~$H'$. As this cycle clearly uses the edges
	$wv',v'v,vw'$ it contains Hamilton $w,w'$-path in~$H$.
\end{proof}

We  use Corollary~\ref{cor:bip-path} to obtain Hamilton paths in the following somewhat specialized
setting, which appears in the analysis of our extremal cases.

\begin{lemma}\label{lem:bip-path}
	Let $0<\mu\le 1$, $n\ge100\mu^{-1}$, and let~$H$ be an $n$-vertex graph on $V(H)=A\dcup B$ with $A'\subset A$, $|B|\le|A|\le|B|+\mu
		n$ and $|A'|\le\mu n$, such that
	\begin{enumerate}[label=\rom]
		\item for each $v\in A$ we have
		      $\deg_H(v,B)\ge(\frac14+3\mu)n$,
		\item for each $v\in B$ we have
		      $\deg_H(v,A)\ge(\frac14+3\mu)n$, and
		\item for each $v\in A\setminus A'$ we have
		      $\deg_H(v,A)\ge 3\mu n$.
	\end{enumerate}
	Let $u,u'\in V(H)\setminus A'$ be distinct vertices
	such that
	if both $u,u'\in A\setminus A'$ then $|A|\ge|B|+1$.
	Then there exists a Hamilton $u,u'$-path in~$H$.
\end{lemma}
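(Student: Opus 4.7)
The plan is to reduce the claim to a balanced bipartite Hamilton cycle problem and invoke bipartite Chv\'atal (Corollary~\ref{cor:bipartite-chvatal}). Set $d := |A| - |B| \in [0, \mu n]$. A short block-pattern count shows that a $u, u'$-Hamilton path that uses no edges inside~$B$ must contain exactly~$k$ edges inside~$A$, where $k = d - 1$ if both $u, u' \in A \setminus A'$ (permissible since the hypothesis then forces $|A| \ge |B| + 1$), $k = d$ if exactly one endpoint lies in~$B$, and $k = d + 1$ if both do; in all cases $k \le \mu n + 1$. Using $\deg_H(v, A) \ge 3\mu n$ for $v \in A \setminus A'$ together with $|A'| \le \mu n$, the induced subgraph on $(A \setminus A') \setminus \{u, u'\}$ has minimum degree at least $2\mu n - 2$, and so the standard minimum-degree lower bound on matching numbers (applicable since $n \ge 100\mu^{-1}$) yields a matching $M = \{a_i a'_i : i \in [k]\}$ of the required size.

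I next contract each pair $(a_i, a'_i)$ to a single vertex $\hat e_i$. Since $|B| \le n/2$,
\[
|N_H(a_i, B) \cap N_H(a'_i, B)| \ge 2\bigl(\tfrac{1}{4} + 3\mu\bigr) n - |B| \ge 6\mu n,
\]
so I can define a bipartite graph $H^\circ$ with parts $A^\circ := (A \setminus V(M)) \cup \{\hat e_1, \ldots, \hat e_k\}$ and~$B$, in which each $v \in A \setminus V(M)$ retains its $H$-neighbours in~$B$ and each $\hat e_i$ is joined exactly to the common $B$-neighbours of $a_i$ and~$a'_i$. The crucial feature, which sidesteps the usual port/orientation problem that arises in such contractions, is that any segment $v - \hat e_i - w$ of a path in~$H^\circ$ lifts to $v - a_i - a'_i - w$ in~$H$ in either orientation, since both $v$ and~$w$ are $B$-neighbours of both $a_i$ and~$a'_i$. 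Consequently any Hamilton $u, u'$-path in $H^\circ$ unfolds to a Hamilton $u, u'$-path in~$H$.

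Depending on the case I then attach one or two new degree-$2$ vertices to $H^\circ$, exactly as in the proof of Corollary~\ref{cor:bip-path}, to obtain a balanced bipartite graph $H^\circ_\mathrm{ext}$ whose Hamilton cycles correspond to Hamilton $u, u'$-paths in~$H^\circ$. The remaining task, which I expect to be the main technical step, is to verify the hypothesis of Corollary~\ref{cor:bipartite-chvatal} for $H^\circ_\mathrm{ext}$: its two sides each have size around $|B| + 1$; the sorted $A$-side degree sequence starts with at most one entry $\le 2$ (the dummy, if added), up to $k$ further entries of size at least $6\mu n$ (the $\hat e_i$), and the rest at least $(\tfrac{1}{4} + 3\mu)n$, while the sorted $B$-side degree sequence starts with at most one entry $\le 2$ and the rest are at least $(\tfrac{1}{4} + 3\mu)n - 2k \ge (\tfrac{1}{4} + \mu)n$, the $2k$ term accounting for the neighbours in $V(M)$ now gone. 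A routine interval-overlap argument then shows that for every index $i$ in the Chv\'atal range either $x_i \ge i + 1$ (for $i$ up to $(\tfrac{1}{4} + 3\mu)n - 1$, using $6\mu n \ge k + 2$ to cover the $\hat e_i$-range) or $y_{n' - i} \ge n' - i + 1$ (for $i$ from roughly $n/4 + 2 - \mu n$ onwards), the two intervals overlapping thanks to $n \ge 100\mu^{-1}$. This delivers the Hamilton cycle, and hence the sought Hamilton $u, u'$-path in~$H$.
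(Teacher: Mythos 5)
Your argument is correct, but it takes a genuinely different route from the paper's. The paper absorbs the imbalance $|A|-|B|$ at one end: it starts paths at $u$ and $u'$, makes a one-edge adjustment so that the two relevant endpoints land in opposite sides, greedily extends one path inside $A\setminus A'$ (using condition (iii)) until the leftover sets $A^*,B^*$ are balanced, and then finishes with a single application of Corollary~\ref{cor:bip-path}, a Dirac-type statement that only needs minimum cross-degree above $n^*/2$. You instead distribute the excess of $A$ as a matching of the exact required size $k\in\{d-1,d,d+1\}$ inside $(A\setminus A')\setminus\{u,u'\}$, contract each matching edge onto the common $B$-neighbourhood of its two ends (your observation that this kills the orientation problem is right: in a bipartite Hamilton path every internal contracted vertex is flanked by two common neighbours, so the lift works in either order), and then must invoke the full degree-sequence strength of Corollary~\ref{cor:bipartite-chvatal}, because the contracted vertices only have degree at least $6\mu n$ rather than above half the side size. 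Your Chv\'atal verification is sound: the at most $k+1$ small entries at the start of the $A$-side sequence are covered because $6\mu n\ge k+2$, and the two covering intervals overlap with room to spare since $n\ge 100\mu^{-1}$. The trade-offs: the paper's route is shorter and needs only the Dirac-type corollary, at the price of a small case analysis on where the endpoints lie; yours requires an auxiliary matching fact (a graph of minimum degree $\delta$ on $N$ vertices has a matching of size $\min(\delta,\lfloor N/2\rfloor)$, which is standard but not proved in the paper, and whose applicability here silently uses that the hypotheses force $\mu\le 1/12$, so that $|A\setminus A'|-2\ge 2k$) together with the full Chv\'atal condition, but it makes the combinatorial accounting completely transparent -- in particular why exactly $k$ edges inside $A$ are needed and why the hypothesis $|A|\ge|B|+1$ is indispensable when both endpoints lie in $A\setminus A'$.
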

\begin{proof}
	We start paths $P$ in~$u$ and $P'$ in~$u'$.
	If $u,u'\in B$, we extend $P$ by adding an edge from~$u$ to an
	arbitrary neighbour in $A\setminus A'$.
	Similarly, if $u,u'\in A\setminus A'$, we extend $P'$ by adding an edge from~$u'$ to an
	arbitrary neighbour in $B$.
	Finally, if $|A\cap\{u, u'\}|=|B\cap\{u, u'\}|=1$, then
	we assume without loss of generality that $u\in A$ and $u'\in B$ and do not
	perform any extensions.

	Observe that after this first step, we have
	$|A\setminus V(P\cup P')|\ge|B\setminus V(P\cup P')|$, because of the assumption in the last sentence of Lemma~\ref{lem:bip-path}.
	We then greedily extend the path $P$
	inside the set $A\setminus A'$ until we have $|A\setminus V(P\cup P')|=|B\setminus V(P\cup P')|$, which is possible because $|A'|\le\mu n$ and for
	$v\in A\setminus A'$ we have $\deg_H(v,A)\ge 3\mu n$.
	Since $|A|\le|B|+\mu n$, we now have $V(P)\le \mu n+1$.

	Remove from $A$ and $B$ all vertices from the paths $P$ and $P'$ except their endvertices $w$ and $w'$, thus obtaining sets $A^*$ and $B^*$. Note that $n^*:=|A^*|=|B^*|\le(\frac12+\mu)n$. We apply Corollary~\ref{cor:bip-path} to the bipartite graph $H^*$ between $A^*$ and $B^*$ (it is not difficult to see that the degree conditions are satisfied), in order to find a Hamiltonian path $P''$ in $H^*$ connecting $w$ and $w'$. We finally connect $P', P''$ and $P'$ to form the desired Hamilton $u,u'$-path in~$H$.
\end{proof}

\subsection{Regularity}\label{sec:regularity}
Given a graph $G$ and disjoint vertex sets $U, W \subset V(G)$ we denote 
the number of edges between $U$ and $W$ by $e(U,W)$ and the \emph{density} 
of $(U,W)$ by $d(U,W) = e(V,W)/(|U||W|)$. The pair $(U,W)$ is called
\emph{$\eps$-regular} in~$G$ if, for all subsets $X \subset U$, 
$Y\subset W$ with $|X| \geq \eps |U|$ and $|Y| \geq \eps |W|$, we have
$|d(U,W)-d(X,Y)| \leq \eps$. We say that $(U,W)$ is \emph{$(\eps,d)$-regular} 
if it is $\eps$-regular and has density at least~$d$.
We say that a vertex $u \in U$ has \emph{typical degree} in a regular 
pair $(U,W)$ if $\deg(u,W) \geq (d(U,W)-\eps)|W|$. It follows directly 
from the definition of $\eps$-regularity that if $(U,W)$ is $\eps$-regular, 
then
\begin{equation}\label{equ:typical-degree-in-reg-pair}
  \text{all but at most $\eps |U|$ vertices in~$U$ have typical degree in $(U,W)$.}
\end{equation}
Another straightforward consequence of the definition of $\eps$-regularity
concerns large subpairs of regular pairs: If $(U,W)$ is an $(\eps,d)$-regular
pair and $U'\subset U$, $W'\subset W$ are of size $|U'|\ge\frac12|U|$,
$|W'|\ge\frac12|W|$, then
\begin{equation}\label{eq:slicing}
	\text{$(U',W')$ is a $(2\eps,d/2)$-regular pair.}
\end{equation}

Szemer\'edi's regularity lemma allows us to partition the vertex set of a graph
into clusters of vertices  such that most pairs of clusters are regular~\cite{Sze76}.
We will use the regularity lemma in its degree form and with 2 colours
(see~\cite{KS96}). For a coloured graph~$G$, a colour~$\chi$ and two disjoint
vertex sets~$U$, $W$ of~$G$ we write $(U,W)_\chi$ for the bipartite subgraph
of $G[U,W]$ containing all edges of colour~$\chi$.

\begin{lemma}[Regularity lemma]\label{lem:regularity}
	For each $\eps>0$ and $m_0\in\mathbb N$ there is $M = M(\eps,m_0)$ such that  for any $2$-edge colouring of any $n$-vertex graph~$G$, and for each $d>0$ there is a partition
	$V_0,V_1,\ldots,V_m$ of $V(G)$ and a subgraph $G'$ of $G$ with vertex set
	$V(G') \sm V_0$ such that:
	\begin{enumerate}[label=\abc]
		\item $m_0 \leq m \leq M$,
		      \item\label{itm:reg-size} $|V_0| \leq \eps n$ and $|V_1| = \ldots = |V_m| \leq \lceil\eps n \rceil$,
		\item $\deg_{G'}(v) \geq \deg_G(v)-(2d+\eps)n$ for each $v \in V(G) \sm V_0$,
		\item $G'[V_i] $ has no edges for $i \in [m]$,
		\item \label{itm:reg-lemma-eps-regular} for all pairs $(V_i,V_j)$ and each
		      colour~$\chi$ the pair $(V_i,V_j)_\chi$
		      is $\eps$-regular in~$G'$ and has density either 0 or at least $d$.
	\end{enumerate}
\end{lemma}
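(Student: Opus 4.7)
The aim is to deduce this degree form from the standard two-colour Szemer\'edi regularity lemma via a cleanup-and-absorption argument. Set $\eps':=(\eps/100)^2$ and $m_0':=\max\{m_0,1/\eps'\}$, and apply the vanilla two-colour regularity lemma with parameters $\eps'$ and $m_0'$ (this lemma follows from the single-colour proof by tracking the mean-square densities of both colour classes simultaneously, or by iterated refinement applied first to $G_\r$ and then to $G_\b$ inside each resulting pair). This produces an equipartition $V_0^\star,V_1,\ldots,V_m$ with $m_0'\le m\le M(\eps',m_0')$, $|V_0^\star|\le\eps' n$, equal cluster sizes $|V_i|\le\lceil\eps' n\rceil$, and at most $\eps' m^2$ pairs $(V_i,V_j)$ that fail to be $\eps'$-regular in at least one colour.

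Next, I would absorb two kinds of bad structure into the exceptional set. Call a cluster $V_i$ \emph{bad} if for some colour $\chi$ more than $\sqrt{\eps'}\,m$ indices $j\ne i$ make $(V_i,V_j)_\chi$ not $\eps'$-regular; double counting gives at most $2\sqrt{\eps'}\,m$ bad clusters. Call a vertex $v$ in a non-bad cluster $V_i$ \emph{super-atypical} if it fails to have typical degree in more than $\sqrt{\eps'}\,m$ of the $\eps'$-regular pairs containing $V_i$ (counting the two colours separately). By~\eqref{equ:typical-degree-in-reg-pair} each such pair contributes at most $\eps'|V_i|$ atypical vertices to $V_i$, so a Markov argument yields at most $2\sqrt{\eps'}|V_i|$ super-atypical vertices per good cluster, and at most $2\sqrt{\eps'}\,n$ in total. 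Set $V_0$ to be $V_0^\star$ together with every vertex of a bad cluster and every super-atypical vertex; the choice of $\eps'$ then gives $|V_0|\le\eps n/2$, securing items~\itm{a} and~\itm{b}.

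Now define $G'$ by deleting from $G$ every edge incident to~$V_0$, every edge inside a cluster, and every edge of colour $\chi$ lying in a pair $(V_i,V_j)_\chi$ that is either not $\eps$-regular or has density less than~$d$. Items~\itm{d} and~\itm{e} are then immediate. For~\itm{c}, fix $v\in V_i\setminus V_0$; its edge loss decomposes as $|V_0|+(|V_i|-1)+L_{\mathrm{irr}}(v)+L_{\mathrm{low}}(v)$. Since $V_i$ is not bad, $v$ has at most $2\sqrt{\eps'}\,m$ irregular partner clusters across both colours, whence $L_{\mathrm{irr}}(v)\le 2\sqrt{\eps'}\,n$. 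Since $v$ is not super-atypical, in each colour $\chi$ the low-density partners of $V_i$ split into the at most $\sqrt{\eps'}\,m$ atypical ones---contributing at most $\sqrt{\eps'}\,n$ edges---and the typical ones, where $\deg_\chi(v,V_j)\le(d+\eps')|V_j|$, together contributing at most $(d+\eps')n$; summing over the two colours gives $L_{\mathrm{low}}(v)\le 2dn+2\eps' n+2\sqrt{\eps'}\,n$. Adding all four contributions and using $\eps'=(\eps/100)^2$ yields the required bound $(2d+\eps)n$.

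The one subtle point is the per-vertex bound on the low-density loss: naive averaging only controls this on average per cluster, so individual vertices could in principle exceed the budget. The super-atypical absorption above is precisely what allows a per-vertex split of the loss into a density-driven term bounded by $2dn$ and a small combinatorial term. All parameter inequalities close provided $\eps'$ is chosen polynomially smaller than~$\eps$.
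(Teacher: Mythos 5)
The paper offers no proof of this lemma: it is quoted as the standard two-colour degree form of Szemer\'edi's regularity lemma, with a citation to Koml\'os--Simonovits, so there is no in-paper argument to compare yours against. Your derivation from the vanilla two-colour lemma is essentially the standard one, and you have correctly identified the only genuinely delicate point: the per-vertex bound in (c) does not follow by averaging over a cluster, and one must enlarge $V_0$ both by the clusters with too many irregular partners and by the vertices that have atypically \emph{high} degree into too many low-density pairs. Your double-counting of both quantities is correct and the parameters close with room to spare.

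Two small repairs are needed. First, once you move bad clusters and super-atypical vertices into $V_0$, the surviving clusters no longer all have the same size, so item \ref{itm:reg-size} fails as written; you must trim each surviving cluster down to the common minimum size (discarding a further $O(\sqrt{\eps'})n$ vertices into $V_0$, which your budget absorbs), and you should only decide which pairs count as low-density \emph{after} this trimming, since trimming can shift a pair's density by $O(\sqrt{\eps'})$ and a pair sitting exactly at density $d$ could otherwise slip below the threshold after the fact. By the slicing observation \eqref{eq:slicing} the trimmed pairs remain $2\eps'$-regular, which is still far stronger than $\eps$-regular, so items (d) and \ref{itm:reg-lemma-eps-regular} survive. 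Second, the fact you invoke, \eqref{equ:typical-degree-in-reg-pair}, is a lower bound on degrees and says nothing about vertices whose degree into a low-density partner is too \emph{high}; what your super-atypical absorption actually needs is the mirror statement that in an $\eps'$-regular pair $(V_i,V_j)_\chi$ at most $\eps'|V_i|$ vertices of $V_i$ have colour-$\chi$ degree into $V_j$ exceeding $(\text{density}+\eps')|V_j|$. This follows from $\eps'$-regularity by the same one-line argument (take $X$ to be the set of offending vertices and $Y=V_j$), so this is a misattribution rather than a gap, but it should be stated, since it is the inequality on which your bound $L_{\mathrm{low}}(v)\le 2dn+2\eps' n+2\sqrt{\eps'}\,n$ rests.
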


Let $G$ be a red and blue edge coloured graph with a partition $V_0, \ldots,V_m$
obtained from Lemma~\ref{lem:regularity} with parameters $\eps$, $m_0$ and $d$.
We then also call the family $\V=\{V_1,\dots,V_m\}$ an \emph{$(\eps,d)$-regular
	partition} of~$G$ with \emph{exceptional set} $V_0$.
We define the (coloured) $(\eps,d)$-\emph{reduced multi-graph}~$\RdG$ corresponding to this
partition to be the multi-graph graph with vertex set $V(\RdG) = [m]$
in which two vertices
$i$ and $j$ are connected by a red (or blue) edge, if $(V_i,V_j)$ is an
$\eps$-regular pair of density at least $d$ in red (blue, respectively). Note
that it is  possible that some $i$ and $j$ are connected by both a red and a
blue edge.

It is easy to show that $\RdG$ inherits the minimum degree of $G$.
More precisely, if $G$ has minimum degree $cn$, it follows that
\begin{equation}\label{equ:reduced-graph-min-deg}
	\delta(\RdG)\ge(c-2d-\eps)m.
\end{equation}

Further, for a graph~$G$, vertex disjoint sets $V_1,\ldots,V_\ell$ of $V(G)$ and a
graph~$\mathcal H$ on vertex set $[\ell]$, we say that~$G$ is
\emph{$(\eps,d)$-regular on~$\mathcal H$} if $(V_i,V_j)$ is $(\eps,d)$-regular
whenever $ij\in E(\mathcal{H})$.

The next lemma allows us to connect clusters by short paths provided 
their counterparts in the reduced graph lie in the same component.
Though the lemma as stated here does not explicitly appear in previous work,
this technique is standard, and the lemma is straightforward to prove.
Given a family of sets $\V= \{V_1,\ldots,V_t\}$, we say that a set $W$ (or a
graph $D$ with $V(D)=W$) is $\delta$-\emph{sparse} in $\V$ if $|W \cap V_i| \leq
	\delta |V_i|$ for each $i \in [t]$.

\begin{lemma}[Connection lemma]\label{lem:connecting-path}
	Let~$\eps,d>0$ and let $m, \ell, n\in\mathbb N$ such that $10\eps\le d\le\frac1{10}$ and $\frac{10m\cdot\ell}{d}\le n$.
	Let $G$ be a graph on~$n$ vertices, let~$V_0\subseteq V(G)$ with $|V_0|\le\eps n$, let $\V = \{V_1,\ldots,V_m\}$ be an equipartition of $V(G)\setminus V_0$,
	and let~$\RdG$ be a graph on~$[m]$ so that~$G$ is $(\eps,d)$-regular
	on~$\RdG$.
	Let $W=(w_1,\ldots,w_\ell)$ be a walk of order~$\ell$ in $\RdG$.
	Let $X \subset V_{w_1}, Y \subset V_{w_\ell}$ be subsets of size $d|V_{1}|/2$.
	Let $S \subset V(G)$ be $(d/4)$-sparse in $\V$.
	Then there is an $X,Y$-path $P \subset G-S$ of order~$\ell$.
\end{lemma}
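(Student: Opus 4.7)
The plan is to apply the standard backward-reachability plus greedy-forward-extension scheme for threading a path through a sequence of regular pairs, adapted to walks (in which clusters may repeat) by reserving a subset of $Y\setminus S$ for the final vertex of the path.

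First I would fix $Y'\subseteq Y\setminus S$ of size $\lceil d|V_1|/8\rceil$ and, for each $i\in[\ell]$, define a target set $T_i\subseteq V_{w_i}$ as follows: set $T_\ell:=Y'$; for $i<\ell$ set $T_i:=V_{w_i}\setminus S$ if $w_i\ne w_\ell$ and $T_i:=V_{w_i}\setminus(S\cup Y')$ if $w_i=w_\ell$; finally intersect $T_1$ additionally with $X$. Using the $(d/4)$-sparsity of $S$, the near-equality of cluster sizes $|V_1|\ge(1-\eps)n/m$, and $d\le 1/10$, one readily checks that $|T_i|\ge|V_{w_i}|/2$ for $1<i<\ell$, while $|T_1|,|T_\ell|\ge d|V_1|/10$ for $n$ large.

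I would then define backward-reachability sets by $U_\ell:=T_\ell$ and, for $i<\ell$,
\[
  U_i:=\{v\in T_i:\deg_G(v,U_{i+1})\ge(d-\eps)|U_{i+1}|\}.
\]
Since $w_iw_{i+1}\in E(\RdG)$, the pair $(V_{w_i},V_{w_{i+1}})$ is $(\eps,d)$-regular in $G$, and provided $|U_{i+1}|\ge\eps|V_{w_{i+1}}|$, $\eps$-regularity forces at most $\eps|V_{w_i}|$ vertices of $V_{w_i}$ to fail the degree condition. Hence $|U_i|\ge|T_i|-\eps|V_{w_i}|$, and using $d\ge10\eps$ a downward induction gives $|U_\ell|\ge d|V_1|/10\ge\eps|V_1|$, $|U_1|\ge d|V_1|/20$, and $|U_i|\ge|V_{w_i}|/3$ for $1<i<\ell$; in particular every $U_i$ is non-empty.

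The path is then built forward: pick any $v_1\in U_1\subseteq X$, and for $i=2,\ldots,\ell$ select
\[
  v_i\in U_i\cap N_G(v_{i-1})\setminus\{v_1,\ldots,v_{i-1}\}.
\]
By $v_{i-1}\in U_{i-1}$ there are at least $(d-\eps)|U_i|$ candidates before excluding previously used vertices; for $1<i<\ell$ this is at least $d|V_{w_i}|/4$, which comfortably exceeds $\ell-1\le dn/(10m)\le d|V_{w_i}|/9$, so a valid $v_i$ exists. The hard part will be the last step $i=\ell$: here $|U_\ell|=|Y'|$ is only of order $d|V_1|$, so $(d-\eps)|U_\ell|$ is of order $d^2|V_1|$, which can be smaller than $\ell$. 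The reservation resolves this cleanly: by construction $T_j\cap Y'=\emptyset$ for every $j<\ell$, so no previously chosen vertex lies in $Y'$, and each of the $\ge(d-\eps)|Y'|>0$ neighbours of $v_{\ell-1}$ in $Y'$ is therefore a valid choice for $v_\ell$. A minor additional subtlety is the degenerate case $w_1=w_\ell$, where $X$ and $Y$ lie in the same cluster and may overlap heavily; taking $|Y'|$ strictly smaller than $|Y\setminus S|$ is exactly what keeps $|T_1|\ge d|V_1|/10$ even if $X\subseteq Y$, so the same construction goes through.
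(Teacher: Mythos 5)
The paper gives no proof of this lemma at all --- it is declared ``standard'' and ``straightforward to prove'' --- so there is nothing to compare against except the folklore argument it alludes to, which is exactly what you have written: backward reachability sets $U_\ell\supsetneq\cdots$ built through the regular pairs along the walk, followed by a greedy forward selection, with the reserved set $Y'$ correctly handling both the repetition of clusters in a walk and the weak final step where only $(d-\eps)|Y'|$ candidates remain. Your proof is correct; the only (cosmetic) caveat is that your explicit constants ($|T_1|\ge d|V_1|/10$ after subtracting the ceiling's $+1$, and the claim ``for $n$ large'') need $d|V_1|\gtrsim 40$, which the bare hypothesis $n\ge 10m\ell/d$ only guarantees once $\ell\ge 5$ --- harmless, since the constants are easily retuned and the lemma is only ever applied with $\ell=m+2$ large.
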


Finally, the blow-up lemma allows us to find almost spanning cycles or paths in
certain regular partitions.
A pair $(U,W)$ is \emph{$(\eps,d)$-super-regular} if it is $(\eps,d)$-regular
and, in addition, for all vertices $u\in U, w\in W$, we have $\deg(u,W)\ge d|U|$
and $\deg(w,U)\ge d|W|$.
The following version of the blow-up lemma can be found
in~\cite[Lemma~7.1]{blowup}.
Note that we only require the special case $\Delta(\RdG')\leq 2$ here.
We remark that one could replace our use of Lemma~\ref{lem:blowup} with a use of the
original blow-up lemma of Koml\'os, S\'ark\"ozy and Szemer\'edi~\cite{KSS97} and
some additional technical work.
\begin{lemma}[Blow-up lemma]\label{lem:blowup}
	For all $\Delta\ge 2$, $\alpha,\zeta, d>0$, $\kappa>1$ there exist
	$\eps,\rho>0$ such that for all $M$ the following holds for all sufficiently
	large $n$.

	Let $\RdG$ be a graph on~$[m]$ with $m\le M$ and let
	$\RdG'$ be a spanning subgraph of $\RdG$ with $\Delta(\RdG')\leq 2$. Let~$H$, $G$ be graphs whose vertex sets are partitioned into sets $X_1,\dots,X_m$
	and $V_1,\dots,V_m$ respectively, such that $|X_i|=|V_i|\ge \frac n{\kappa m}$ and $|X_i|\le\kappa|X_j|$ for each $i,j\in[m]$. Let
	$\widetilde{X}_i\subset X_i$ satisfy $|\widetilde{X}_i|\ge\alpha|X_i|$ for
	each $i\in[m]$, and let $I_x\subset V_i$ for each $i\in[m]$ and $x\in X_i$.
	Suppose that
	\begin{enumerate}[label=\itmarab{B}]
		\item\label{B:X} $\Delta(H)\leq \Delta$, all edges of $H$ lie on pairs
		$X_iX_j$ with $ij\in\RdG$, and for each $i\in[m]$, all edges of $H$ incident with $\widetilde{X}_i\cup N(\widetilde{X}_i)$ lie on pairs $X_jX_{j'}$ with $jj'\in
			\RdG'$,
		\item\label{B:V} $G$ is $(\eps,d)$-regular on~$\RdG$, and for each $ij\in
			E(\RdG')$ the pair $(V_i,V_j)$ is $(\eps,d)$-super-regular, and
		\item\label{B:I} for each $i\in[m]$ and $x\in X_i$ we have
		$|I_x|\ge\zeta|X_i|$ and there are at most $\rho|X_i|$ vertices $x\in X_i$
		such that $I_x\neq V_i$.
	\end{enumerate}
	Then there is an embedding $\psi\colon V(H)\to V(G)$ with $\psi(x)\in
		I_x$ for each $x\in H$.
\end{lemma}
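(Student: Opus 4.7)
The plan is to follow the standard approach to the Blow-up lemma due to Koml\'os, S\'ark\"ozy and Szemer\'edi, specialised to the case $\Delta(\RdG')\le 2$ flagged as sufficient here. In this case $\RdG'$ decomposes into disjoint paths and cycles, so the bulk of the work is to embed vertices of $H$ incident to $\widetilde X_i$ (or its neighbourhood) into the corresponding super-regular chains of pairs; the remaining vertices of $H$ sit in a ``buffer'' part of $X_i$ and their $H$-edges may use general pairs of $\RdG$.

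First, I would partition each $X_i$ into three sets: the \emph{restricted} vertices $R_i:=\{x\in X_i:I_x\neq V_i\}$, a \emph{buffer} $B_i$ of size roughly $\rho|X_i|$ chosen inside $X_i\setminus(\widetilde X_i\cup N_H(\widetilde X_i))$ so that its $H$-neighbourhood avoids $R_i$, and the \emph{main} part. Then I would embed $R_i$ first via a Hall/K\"onig-type argument, producing an injection $\psi_0(x)\in I_x$; this is possible since $|I_x|\ge\zeta|X_i|$ while $|R_i|\le\rho|X_i|$ with $\rho\ll\zeta$, so a defect form of Hall's condition holds in the natural candidate bipartite graph between $R_i$ and $V_i$.

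Next, I would embed the main part via a randomised greedy algorithm: process vertices $x$ in an order that extends already-embedded components along the paths and cycles of $\RdG'$, and at each step pick $\psi(x)$ uniformly from the set of \emph{candidates}, that is, unused vertices of $V_i$ adjacent in $G$ to every already-embedded $H$-neighbour of $x$. The invariant, maintained by $\eps$-regularity, is that for each unembedded $x$ the candidate set still has size at least roughly $(d/2)^{\deg_H(x)}|V_i|$; a concentration argument (Azuma or the differential equation method, as in~\cite{KSS97}) shows that atypical situations arise in only $o(|V_i|)$ steps, and these can be rerouted using the slack provided by the buffer.

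The main obstacle will be the \emph{endgame}: once the random stage leaves only a small leftover in each $V_i$, one can no longer invoke regularity directly and must argue that $B_i$ completes the embedding. I would formulate this as finding a perfect matching in an auxiliary bipartite graph between unembedded vertices of $B_i$ and the remaining slots of $V_i$, whose density is guaranteed by $(\eps,d)$-super-regularity on $\RdG'$ together with the property that all $H$-edges incident to $B_i$ lie on edges of $\RdG'$; a K\"onig/Hall argument, using that $|B_i|\ge\rho|X_i|$ is not too small and that super-regularity provides a uniform lower bound on each vertex's degree into the leftover, then closes the embedding.
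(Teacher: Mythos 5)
First, a point of comparison: the paper does not prove Lemma~\ref{lem:blowup} at all --- it quotes it from \cite[Lemma~7.1]{blowup} and merely remarks that one could instead use the original blow-up lemma of Koml\'os, S\'ark\"ozy and Szemer\'edi together with additional technical work. So you are attempting a proof where the authors supply a citation, and your sketch has to be judged as an outline of that (long and genuinely technical) argument.

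As an outline it has the right overall shape (pre-embed the image-restricted vertices, randomised greedy embedding, buffer plus K\"onig/Hall endgame), but it contains a concrete structural error that would make the endgame fail: you place the buffer $B_i$ inside $X_i\setminus(\widetilde X_i\cup N_H(\widetilde X_i))$ and give it size roughly $\rho|X_i|$. The set $\widetilde X_i$ appears in the statement precisely in order to host the buffer: hypothesis~\ref{B:X} guarantees that all $H$-edges incident with $\widetilde X_i\cup N(\widetilde X_i)$ lie on pairs of $\RdG'$, i.e.\ on $(\eps,d)$-\emph{super}-regular pairs, and this is exactly what your endgame invokes when you assert that ``all $H$-edges incident to $B_i$ lie on edges of $\RdG'$'' and that ``super-regularity provides a uniform lower bound on each vertex's degree into the leftover''. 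For a buffer chosen \emph{outside} $\widetilde X_i\cup N_H(\widetilde X_i)$ neither assertion is available: plain $\eps$-regularity gives no minimum-degree guarantee for individual leftover vertices of $V_i$, so Hall's condition for large subsets of $B_i$ (equivalently, the requirement that every leftover slot of $V_i$ be a candidate for many buffer vertices) cannot be verified and the completing matching need not exist. The buffer must be taken \emph{inside} $\widetilde X_i$, and it must have size a positive constant fraction of $|X_i|$ --- which is why the statement demands $|\widetilde X_i|\ge\alpha|X_i|$ with $\alpha$ an input parameter --- not size $\rho|X_i|$; the constant $\rho$ is an output of the lemma, chosen small precisely so that the few image-restricted vertices do not interfere with the main embedding. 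A secondary gap: embedding the restricted vertices first ``via a Hall/K\"onig-type argument between $R_i$ and $V_i$'' ignores $H$-edges joining two restricted vertices (these do occur, e.g.\ the two endpoints of a very short path $P_e$ in the paper's application); those adjacency constraints must also be respected, which requires using the regularity of the relevant pair and not merely the bound $|I_x|\ge\zeta|X_i|$.
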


\begin{figure}
  \begin{center}
  \begin{tikzpicture}[scale=0.7]
    \begin{scope}[transform shape]      
      \node[white set, minimum width=48] (a) at (0,0) {};
      \node[white set, minimum width=69] (b) at (4,0) {};
      \node[white set, minimum width=65] (c) at (0,4) {};
      \node[white set, minimum width=54] (d) at (4,4) {};
      \begin{scope}[on background layer]
	\fill[blue edge] (a.south) -- (b.south) -- (b.north) -- (a.north)  -- (a.south) -- cycle;
	\fill[blue edge] (c.north) -- (d.north) -- (d.south) -- (c.south)  -- (c.north) -- cycle;
	\fill[red edge] (a.west) -- (a.east) -- (c.east) -- (c.west) -- cycle;
	\fill[red edge] (a.west) -- (a.east) -- (d.east) -- (d.west) -- cycle;
	\fill[red edge] (b.west) -- (b.east) -- (c.east) -- (c.west) -- cycle;
	\fill[red edge] (b.west) -- (b.east) -- (d.east) -- (d.west) -- cycle;
	\fill[red edge] (b.east) -- (d.east) -- (d.south) -- (c.south)  -- (c.west) -- (b.west) -- (b.east) --  cycle;
      \end{scope}
    \end{scope}
    \begin{scope}[xshift=9cm,transform shape]
      \node[red set] (a) at (0,0) {};
      \node[blue set] (b) at (4,0) {};
      \node[blue set] (c) at (0,4) {};
      \node[red set] (d) at (4,4) {};
      \begin{scope}[on background layer]
	\fill[red edge] (a.south) -- (b.south) -- (b.north) -- (a.north) -- (a.south) --  cycle;
	\fill[red edge] (c.south) -- (d.south) -- (d.north) -- (c.north) -- (c.south) --  cycle;
	\fill[blue edge] (a.west) -- (c.west) -- (c.east) -- (a.east) -- (a.west) --  cycle;
	\fill[blue edge] (b.west) -- (d.west) -- (d.east) -- (b.east) -- (b.west) --  cycle;
      \end{scope}
    \end{scope}
  \end{tikzpicture}
  \end{center}
  \caption{The extremal colourings: The configuration given in
    Definition~\ref{def:extremal-colouring}\ref{conf:bipartite} is depicted on
    the left and the one in
    Definition~\ref{def:extremal-colouring}\ref{conf:four-cycle} on the right.}
  \label{fig:extremal-colourings}
\end{figure}
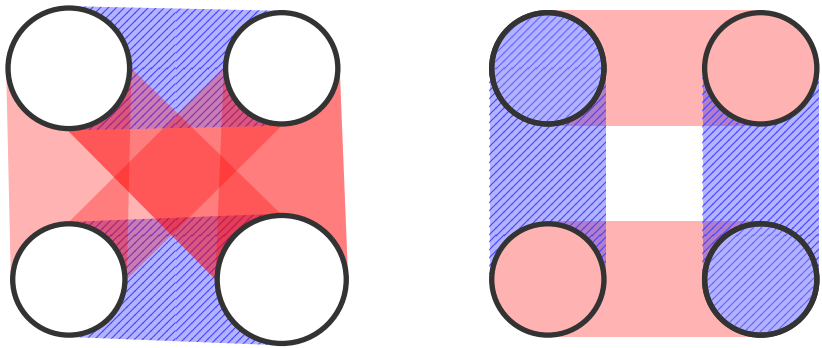


\section{Main lemmas and proof of the main theorem}\label{sec:big-proof}

In this section we prove Theorem~\ref{thm:main}. We first sketch the proof and
give the lemmas we need, and then give a formal proof.

For proving Theorem~\ref{thm:main}, we apply the regularity lemma to $G$ to
obtain a regular partition $\V=\{V_1,\dots,V_m\}$ with exceptional set~$V_0$ and
a corresponding coloured reduced multi-graph~$\RdG$ on vertex set~$[m]$.
Note that~$\RdG$ approximately inherits the (relative) minimum degree conditions as~$G$.
Our task is now to analyse the structure of monochromatic components in $\RdG$.

We begin by introducing the following extremal colourings (see also Figure~\ref{fig:extremal-colourings}), which require special treatment.
Recall that in a coloured multi-graph there is at most one red edge and at most one blue edge between any pair of vertices.

\begin{definition}[Extremal colourings]\label{def:extremal-colouring}
	Let $G$ be an $m$-vertex (multi-)graph whose edges are coloured with red and
	blue. We say that the colouring of $G$ is \emph{$\gamma$-extremal} if one of
	the following holds (modulo swapping colours).
	\begin{enumerate}[label=\abc]
		\item \label{conf:bipartite} $G$ has a spanning bipartite red component with
		      bipartition classes $X_1,X_2$ and such that $\big||X_1|-|X_2|\big|\leq
			      \gamma m$. There are (exactly) two blue components $B_1,B_2$, both
		      bipartite, with $V(B_1) = X_1$ and $V(B_2) = X_2$.
		\item \label{conf:four-cycle}
		      The subgraph $G$ has two red and two blue monochromatic components $R_1,R_2,B_1,B_2$ such that $V(G) = \bigcup_{i,j \in [2]} V(R_i) \cap V(B_j)$ and $|V(R_i) \cap V(B_j)| \leq (1/4 + \gamma)m$ for $1\leq i,j \leq 2$.
			      {Moreover, all but at most $\gamma m^2$ edges of $G[V(R_i) \cap V(B_j)]$ are blue if $i=j$ and red otherwise.}
	\end{enumerate}
\end{definition}

Now we present our main structural lemma, whose proof we defer to Section~\ref{sec:component-lemma}.
It states that if $\RdG$ is not close to an extremal colouring, then we can choose three
monochromatic components covering all vertices of $\RdG$ which have various good
properties that allow us to complete the cycle partition.

\begin{lemma}[Finding components]\label{lem:components}
	Let $1/m\ll\gamma$.
	Let $\RdG$ be a multi-graph on $m$ vertices  with $\delta(\RdG) \geq (2/3 +
		8\gamma)m$, whose edges are coloured in two colours.
	Suppose that the colouring of $\RdG$ is not $(4\gamma)$-extremal.
	Then there are distinct monochromatic components $C_1,C_2,C_3 \subset\RdG$ such that $C :=
		\bigcup C_i$ spans $\RdG$ and all of the following hold.
	\begin{enumerate}[label=\rom]
		\item 
		      \label{itm:robust-matching}
		      $C$ is $\gamma$-robust Tutte.
		\item 
		      \label{itm:overlap} One of the following holds:
		      \begin{enumerate}[label=\abc]
			      \item \label{itm:true-overlap}
			            $|\bigcup_{i\neq j} V(C_i) \cap V(C_j)| \geq (1/3 + \gamma)m$, or
			      \item \label{itm:spanning-and-empty} $C_1$ is spanning, $C_1$ or $C_2$
			            contains an odd cycle, and $C_3 = \es$.
		      \end{enumerate}
		\item 
		      \label{itm:odd-monochromatic-cycle}
		      If $C_1,C_2,C_3$ are each bipartite, then $C_3=\es$.
		\item 
		      \label{itm:connectivity} $C$ is connected.
	\end{enumerate}
\end{lemma}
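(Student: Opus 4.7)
The plan is to exploit the minimum degree $\delta(\RdG)\ge(2/3+8\gamma)m$ to locate one or two dominant monochromatic components, then to select a third component (possibly empty) so that the triple $C_1,C_2,C_3$ satisfies \ref{itm:robust-matching}--\ref{itm:connectivity}; whenever such a selection is impossible I will show that $\RdG$ must exhibit one of the extremal configurations of Definition~\ref{def:extremal-colouring}, contradicting the hypothesis.

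The first step is a Gy\'arf\'as-style argument to find a monochromatic component of large order. Each vertex has monochromatic degree at least $(1/3+4\gamma)m$ in one colour, which a standard promoting argument upgrades to a monochromatic component, say a red one $R$, of order at least $(2/3+O(\gamma))m$. Write $A=V(R)$ and $Z=V(\RdG)\sm A$. Every $Z$--$A$ edge is blue (else its $Z$-endpoint would lie in $R$), and the degree hypothesis forces each $z\in Z$ to have many blue neighbours in $A$, so when $Z\neq\es$ there is a blue component $B$ containing $Z$ together with a substantial portion of $A$ of size at least $(1/3+\gamma)m$. I then split into cases: if $R$ spans $\RdG$ and contains an odd cycle, I set $C_1=R$, $C_3=\es$, and pick $C_2$ to be a suitable blue component, aiming for \ref{itm:overlap}\ref{itm:spanning-and-empty}; if $R$ spans and is bipartite with parts $X_1,X_2$, I look for two blue components meeting both $X_i$, whose absence would force configuration \ref{conf:bipartite}; if $Z\neq\es$, I set $C_1=R$, $C_2=B$, and choose $C_3$ among the remaining monochromatic components to guarantee connectivity and break bipartiteness when needed, an obstruction here forcing configuration \ref{conf:four-cycle}.

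The main obstacle is verifying the $\gamma$-robust Tutte condition \ref{itm:robust-matching}. For a non-empty stable $S\subset V(C)$ I would partition $S$ according to which pair of $C_i$'s each vertex inhabits: a vertex lying in two $C_i$'s has essentially its entire $\RdG$-neighbourhood inside $C$, while a vertex in only one component contributes only its monochromatic neighbourhood of the corresponding colour to $N_C(S)$. Since $\delta(\RdG)\ge(2/3+8\gamma)m$ and no $C$-edge lies within $S$, a careful double-count of $C$-edges between $S$ and $N_C(S)$ should yield $|N_C(S)|\ge|S|+\gamma m$ --- unless $S$ is close to a balanced bipartition class of $C$, in which case the structure of $\RdG$ reduces to an $(4\gamma)$-extremal colouring, contradicting the hypothesis. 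Conditions \ref{itm:odd-monochromatic-cycle} and \ref{itm:connectivity} then follow from the case-split choices: connectivity is automatic from the large pairwise overlap in \ref{itm:overlap}\ref{itm:true-overlap} or from $C_1$ being spanning in \ref{itm:overlap}\ref{itm:spanning-and-empty}, and a triple of bipartite components is excluded by the same extremal reduction.
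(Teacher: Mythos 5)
Your opening step is false, and the rest of the argument is built on it. You claim that the minimum degree condition yields a single monochromatic component of order at least $(2/3+O(\gamma))m$. Consider the graph on four sets $I_{1,1},I_{1,2},I_{2,1},I_{2,2}$, each of size $m/4$, with all edges present except between $I_{1,1}$ and $I_{2,2}$ and between $I_{1,2}$ and $I_{2,1}$ (so $\delta=3m/4-1$), coloured as in Definition~\ref{def:extremal-colouring}\ref{conf:four-cycle} except that a constant fraction (more than $4\gamma m^2$) of the edges inside $I_{1,1}$ are recoloured red. This colouring is not $(4\gamma)$-extremal -- configuration \ref{conf:bipartite} fails because no monochromatic component is spanning, and configuration \ref{conf:four-cycle} fails because of the ``moreover'' clause -- yet every monochromatic component has order exactly $m/2$. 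So the hypothesis of the lemma does not put you in the situation you assume, and your subsequent claim that the uncovered set $Z$ lies inside a \emph{single} blue component fails for the same reason (in such colourings $Z$ meets two blue components). The correct starting point, which the paper uses, is the weaker statement that \emph{two} monochromatic components together span $\RdG$ (proved via a K\H{o}nig-type argument on the bipartite incidence graph of red versus blue components), followed by a three-way case split according to whether one of them is spanning, they have different colours, or they have the same colour; your proposal only ever addresses the analogue of the first of these cases.

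Separately, even granting the structure you assume, the verification of the $\gamma$-robust Tutte condition is where essentially all of the work lies, and your description of it (``a careful double-count \dots should yield $|N_C(S)|\ge|S|+\gamma m$ unless $S$ is close to a balanced bipartition class'') does not engage with the actual difficulties: a violating stable set forces the existence of one or two large blue components disjoint from those already chosen, one must then consider several competing candidate triples and show that at least one of them works, and the interaction with bipartiteness (needed for \ref{itm:odd-monochromatic-cycle}) has to be threaded through all of this. As it stands the proposal is not a proof.
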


Our next task is to make a case distinction, depending on the structure of
$\RdG$ and of $G$. For this we need the following definition.

\begin{definition}[Bridges]\label{def:bridges}
	Suppose $G$ is a graph whose edges are coloured in red and blue and which has
	an $(\eps,d)$-regular partition
	$\V=\{V_1,\ldots,V_m\}$ with corresponding coloured $(\eps,d)$-reduced multi-graph $\RdG$.
	Let $H_1, H_2 \subset \RdG$ be monochromatic connected subgraphs of $\RdG$ of
	colour $\chi$ and
	let $G_\chi$ be the subgraph of~$G$ formed by all edges in $E(G)$ of colour $\chi$.
	We say that $(H_1,H_2)$ \emph{admits bridges} if there are two distinct
	vertices $u,u'$ in~$G$ and clusters $V_i$, $V_j$, $V_{i'}$, $V_{j'}$ in~$\V$
	with $i,i'\in V(H_1)$ and $j,j'\in V(H_2)$
	such that
	\begin{align*}
		\deg_{G_\chi}(u,V_i)     & \ge d|V_i|\,,                   & \qquad \deg_{G_\chi}(u,V_j)     & \ge d|V_j|,       \\
		\deg_{G_\chi}(u',V_{i'}) & \ge d|V_{i'}|\,\qquad\text{and} & \qquad \deg_{G_\chi}(u',V_{j'}) & \ge d|V_{j'}| \,.
	\end{align*}
	If the colour~$\chi$ is blue, then we also talk of \emph{blue bridges}, if $\chi$ is red we talk of \emph{red bridges}.
\end{definition}

Note that for any monochromatic connected $H  \subset \RdG$ with at least one edge, the pair
$(H,H)$ admits bridges.

Continuing with our proof strategy, we separate three cases. First, $\RdG$ is
not close to either extremal colouring. Second, $\RdG$ is close to an extremal
colouring but there is a pair of red (blue) components of $\RdG$ which admit
red (blue) bridges. Third, $\RdG$ is close to an extremal colouring and no such
bridges exist. In the last case, the structure of $G$ is so close to the
respective extremal graph that we can find the desired three-cycle partition without the regularity lemma, as the following lemma states. We prove this lemma in
Section~\ref{sec:extremal-colourings}.

\begin{lemma}[Extremal cases]\label{lem:extremal-colouring}
	Let ${1}/{n}  \ll \eps,1/m \ll d \ll \gamma \ll \beta.$
	Suppose $G$ is a graph on $n$ vertices with minimum degree $\delta(G) \geq (2/3 +
		\beta)n$ whose edges are coloured with red and blue.
	Suppose $G$ has a coloured $(\eps,d)$-reduced multi-graph $\RdG$ on $m$ vertices.
	Let the colouring of $\RdG$ be $\gamma$-extremal and such that one of the following is true:
	\begin{itemize}
		\item $\RdG$ has a colouring as in Definition~\ref{def:extremal-colouring}\ref{conf:bipartite} with monochromatic components $R_1,B_1,B_2$ and $(B_1,B_2)$ does not admit bridges.
		\item $\RdG$ has a colouring as in Definition~\ref{def:extremal-colouring}\ref{conf:four-cycle} with monochromatic components $R_1,R_2,B_1,B_2$ and neither $(R_1,R_2)$ nor $(B_1,B_2)$ admits bridges.
	\end{itemize}
	Then $G$ has a partition into three monochromatic cycles.
\end{lemma}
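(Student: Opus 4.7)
The proof handles the two extremal configurations from Definition~\ref{def:extremal-colouring} separately but in parallel fashion, bypassing the reduced graph and constructing the cycle partition directly in $G$. The first step is to lift the extremal structure from $\RdG$ to $G$: in case~\ref{conf:bipartite} set $\tX = \bigcup_{i \in X_1} V_i$ and $\tY = \bigcup_{i \in X_2} V_i$; in case~\ref{conf:four-cycle} define four sets $A'_{ij} = \bigcup_{k \in V(R_i) \cap V(B_j)} V_k$. Each exceptional vertex $v \in V_0$ is then assigned to the set into which it has the largest neighbourhood of the appropriate colour. The regularity of the partition together with $\delta(G) \ge (2/3 + \beta)n$ gives, for every vertex $v$, typical degrees into each of these parts up to an $o(n)$ error.

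The key second step is to invoke the no-bridges hypothesis. A short argument from Definition~\ref{def:bridges} shows that in case~\ref{conf:bipartite} there is at most one ``rogue'' vertex of $G$ with substantial blue degree via regular pairs in both $\tX$ and $\tY$, and every other vertex has essentially all of its blue neighbours on one side; in case~\ref{conf:four-cycle}, the analogous statement holds for both colour pairs. This upgrades the global minimum-degree condition into strong one-sided degree bounds: in case~\ref{conf:bipartite} each vertex $v \in \tX$ satisfies $\deg_\b(v, \tX) \ge (\tfrac13 + \Omega(\beta))|\tX|$ (and analogously for $\tY$), while in case~\ref{conf:four-cycle} each vertex $v \in A'_{ij}$ has large degrees within $A'_{ij}$ and within its two neighbouring parts of the appropriate colour, matching precisely the hypotheses of Lemma~\ref{lem:bip-path}.

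Finally, I build the cycles. In case~\ref{conf:bipartite}, a blue Hamilton cycle in each of $G_\b[\tX]$ and $G_\b[\tY]$ can be extracted via Corollary~\ref{cor:bipartite-chvatal}, with any rogue vertex placed on its majority side and any remaining imbalance absorbed by a short red third cycle, which is available because the spanning red component is bipartite and of high density. In case~\ref{conf:four-cycle}, I apply Lemma~\ref{lem:bip-path} once inside each red component $R_i$, viewing $R_i$ as the almost bipartite graph between $A'_{ii}$ and $A'_{ij}$ ($j \neq i$), with $A'_{ij}$ playing the role of the ``clique-like'' side $A$ and $A'_{ii}$ playing the role of $B$; the resulting two Hamilton paths are closed into cycles by selecting endpoints joined by a red edge (or handled via a short third cycle if closure fails).

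The main obstacle lies in case~\ref{conf:four-cycle}, where Lemma~\ref{lem:bip-path} demands $|B| \le |A| \le |B| + \mu n$; if the lifted parts do not satisfy this balance, one must shift a small number of vertices between $R_1$ and $R_2$, or introduce a third monochromatic cycle (possibly a single edge or vertex) to correct the imbalance. The no-bridges hypothesis is exactly what ensures that these shifts can be performed without destroying the one-sided degree bounds, so that at most three monochromatic cycles suffice.
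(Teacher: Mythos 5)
There is a genuine gap, and it lies in the cycle constructions themselves; the preparatory steps (lifting the structure, assigning $V_0$, and using the no-bridges hypothesis to isolate at most one ``rogue'' vertex per colour) are sound and match the paper.

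In case~\ref{conf:bipartite} your plan is to cover $\tX$ and $\tY$ by blue Hamilton cycles and mop up with a short red cycle, but this fails at the first step: $G_\b[\tX]$ is (up to the negligible non-$G'$ edges) the blow-up of the \emph{bipartite} blue component $B_1$, whose two classes are only constrained to lie between roughly $n/6$ and $n/3$ each — nothing in Definition~\ref{def:extremal-colouring}\ref{conf:bipartite} forces them to be balanced, and an unbalanced bipartite graph has no Hamilton cycle, so Corollary~\ref{cor:bipartite-chvatal} does not apply. Nor can a red cycle absorb the two leftovers: every red edge joins $\tX$ to $\tY$, so a red cycle meets $\tX$ and $\tY$ in equally many vertices, while the excess inside $\tX$ and the excess inside $\tY$ are independent (and possibly linear) quantities. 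The balanced bipartition in this configuration is the \emph{red} one (that is exactly what $\gamma$-extremality guarantees), which is why the paper reverses your colour roles: one long red Hamilton cycle between $X'_R$ and $Y'_R$ via Corollary~\ref{cor:bipartite-chvatal}, one blue cycle of length $O(\gamma n)$ inside $\tX$ (living in the bipartite blue graph $G'_\b[X_{B_1},Y_{B_1}]$) to eat the $O(\gamma n)$ imbalance $|X'_R|-|Y'_R|$, and one blue cycle on at most two vertices for parity and the rogue vertex $z^*$.

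In case~\ref{conf:four-cycle} the same kind of obstruction appears. Writing $a_{ij}=|A'_{ij}|$, your two red applications of Lemma~\ref{lem:bip-path} need the ``clique-like'' side to be the larger one in \emph{both} red components, i.e.\ $a_{12}\ge a_{11}$ and $a_{21}\ge a_{22}$, and the all-blue alternative needs $a_{11}\ge a_{21}$ and $a_{22}\ge a_{12}$; one can have, say, $a_{11}>a_{12}$ and $a_{21}>a_{11}$, so that neither option is available, and a mixed choice (one red, one blue component) does not cover $V(G)$. The discrepancies here are of order $\sqrt{\gamma}\,n$, so they cannot be repaired by ``a single edge or vertex'' as a third cycle, and vertices cannot be ``shifted between $R_1$ and $R_2$'' — membership in a monochromatic component is not something you get to choose. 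The missing idea is the paper's use of the third cycle as a \emph{linear-but-short balancing cycle}: a red cycle of length up to $20\sqrt{\gamma}\,n$ confined to one quadrant (using the internal red edges of $V(R_1)\cap V(B_2)$, and incorporating $x^*_\r$ when needed), chosen so that afterwards the two blue components can each be covered by a spanning blue cycle via Lemma~\ref{lem:bip-path}. Even then a nontrivial case analysis around $x^*_\r$, $x^*_\b$ and the exact size differences (Claims~\ref{cla:nopart-structure}--\ref{cla:one-blue}) is required; none of this is present in your sketch.
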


When $\RdG$ is not close to an extremal colouring, or when it is but some pair
of components admits bridges, we work with the regular partition to find the
desired three-cycle partition. We now describe the proof for the case that
$\RdG$ is not close to either extremal colouring, and explain briefly at the end
the modification required for the case that $\RdG$ is close to an extremal
colouring but some pair of components admit bridges. We set $H_i:=C_i$ for
$i=1,2,3$, where the $C_i$ are the components provided by
Lemma~\ref{lem:components}.

The first step is to fix a perfect $2$-matching $\Mblow$
of~$\RdG$.  We
then move a few vertices from each of the clusters
$V_1,\dots,V_m$ to the exceptional set in order to make the regular pairs of $G$ corresponding to
$\Mblow$ super-regular. We denote the resulting exceptional set by $\Vexc$.

Our next step is to find three pairwise disjoint cycles in $G$ which form the
basis of our three-cycle partition. In each $H_i$ we will find a cycle $D_i$, such that all the $D_i$ together cover the vertices~$\Vexc$.
In addition, the cycles $D_i$ will be `easy to extend to the pairs in $H_i$', which we make precise in
next definition.

\begin{definition}[Support path embeddings]\label{def:spe}
	Let $G$ be a graph whose edges are coloured in red and blue and which has an $(\eps,d)$-regular partition $\V=\{V_1,\dots,V_t\}$ with corresponding coloured $(\eps,d)$-reduced multi-graph $\RdG$.
	Let~$H \subset \RdG_\chi$ be a subgraph of colour $\chi$.
	We say that $D \subset G_\chi$ \emph{{supports path embeddings in $H$}}, if for each edge $ij \in E(H)$ there is an edge $f_{ij}=\{u,v\} \in D$ with $u\in V_i$ and $v\in V_j$ such that~$u$ and~$v$ have typical degree in the regular pair $(V_i,V_j)_\chi$.
	We also say that the edge~$f_{ij}$ \emph{witnesses} that~$D$ supports path embeddings in~$H$ for~$ij$.
\end{definition}

Our next lemma constructs the desired cycles covering the exceptional vertices.
We prove this lemma in Section~\ref{sec:distribute-exceptional-vertices}.

\begin{lemma}[Covering exceptional vertices]\label{lem:distribute-exceptional-vertices}
	Let ${1}/{n}  \ll \eps,\frac{1}{m} \ll \delta \ll d  \ll {\gamma} \ll \beta$.
	Suppose $G$ is a graph on $n$ vertices of minimum degree $\deg(G) \geq (2/3 +
		\beta)n$ whose edges are coloured with two colours.
	Let $G$ have an $(\eps,d)$-regular partition $\V = \{V_1,\ldots,V_m\}$
	with exceptional set~$V_0$ and corresponding coloured $(\eps,d)$-reduced multi-graph $\RdG$.
	Let $W \subset V(G)$ be $\delta$-sparse in $\V$.
	Let $H_1,H_2,H_3$ be subgraphs of $\RdG$ with the following
	properties.
	\begin{enumerate}[label=\rom]
		\item\label{itm:distr-exc-i} One of the following holds:
		\begin{enumerate}[label=\abc]
			\item \label{itm:distr-exc-intersection} $|\bigcup_{i\neq j} V(H_i) \cap V(H_j)| \geq {(1/3 +
					      \gamma)m}$ or
			\item \label{itm:distr-exc-spanning-and-empty} $H_1$ is spanning, $H_1$ or
			      $H_2$ contains an odd cycle, and $H_3 = \es$.
		\end{enumerate}
		\item If $H_1,H_2,H_3$ are each bipartite, then $H_3=\es$.
		\item \label{itm:distr-exc-H1H2H3} $H_1$ and $H_3$ are {monochromatic components} and $H_2$ is the union of two {monochromatic components} which admit bridges.
		\item \label{itm:distr-exc-different-colours} If $V(H_i)\cap V(H_j)\neq\emptyset$, then $H_i$ and $H_j$ are of different colours.
	\end{enumerate}
	Then there are disjoint monochromatic cycles $D_1$, $D_2$, $D_3$ such that $D_i$ supports paths embeddings in $H_i$ for each $i \in [3]$.
	Moreover, for $D = D_1 \cup D_2 \cup D_3$, the set~$V(D)$ is $\sqrt{\delta}$-sparse in $\V$, we have $W \subset V(D)$, and $|V(G) \sm V(D)|$ is even.
\end{lemma}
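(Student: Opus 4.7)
The plan is to build each $D_i$ as a short ``skeleton cycle'' in $G_{\chi_i}$, where $\chi_i$ denotes the colour of $H_i$, that (a) contains a representative edge with typical-degree endpoints for every edge of $H_i$ (yielding the property from Definition~\ref{def:spe}), (b) passes through all exceptional vertices assigned to it, and (c) uses only a small number of vertices from each cluster so that almost-spanning paths can later be extended along the regular pairs. In particular, since $H_2$ consists of two components, I will use the two bridge vertices guaranteed by condition~\ref{itm:distr-exc-H1H2H3} to splice them into a single cycle $D_2$.

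First I would add to $W$ the two bridge vertices for $H_2$ and, if needed, one dummy vertex reserved for parity; the enlarged set $W$ remains sparse in $\V$. Next, I assign each $w\in W$ to an index $i(w)\in\{1,2,3\}$ together with an ordered pair of cluster-indices $a(w),b(w)\in V(H_{i(w)})$ such that $w$ has density at least $d$ with both $V_{a(w)}$ and $V_{b(w)}$ in colour $\chi_{i(w)}$. To justify existence, note that $\deg_G(w)\ge(2/3+\beta)n$ forces $\deg_\chi(w)\ge(1/3+\beta/2)n$ for some colour $\chi$, so $w$ has density at least $d$ with roughly $(1/3+\beta/2)m$ clusters in colour $\chi$. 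Combining this with conditions~\ref{itm:distr-exc-i} and~\ref{itm:distr-exc-H1H2H3}, which control how the $H_i$ of colour $\chi$ cover $\RdG$, pigeonhole then produces an $H_i$ of colour $\chi$ containing at least two such clusters. Bridge vertices are assigned directly via the bridge definition.

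Given the assignment, I construct a closed walk $\pi_i$ in $\RdG$ that traverses every edge of $H_i$ at least once and, for each $w$ with $i(w)=i$, visits $V_{a(w)}$ followed by $V_{b(w)}$. For $H_1$ and $H_3$ this is straightforward because they are connected; for $H_2$ I splice two sub-walks together via the bridge vertices. Each $\pi_i$ has length $O(m+|W|)$. To realise $\pi_i$ as a cycle in $G$, I first pre-select a representative edge $f_e=\{u_e,v_e\}\in G_{\chi_i}$ with typical-degree endpoints for every $e\in E(H_i)$, which is possible by~\eqref{equ:typical-degree-in-reg-pair}. I then iteratively apply the Connection Lemma (Lemma~\ref{lem:connecting-path}) to connect the pre-selected representative edges in the order dictated by $\pi_i$, routing the short connecting path through $w$ at the designated position by entering via a colour-$\chi_i$ neighbour of $w$ in $V_{a(w)}$ and exiting into $V_{b(w)}$. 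By excluding previously used vertices on each call to Lemma~\ref{lem:connecting-path}, the three cycles $D_1,D_2,D_3$ are kept disjoint.

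The main obstacle, in my view, is the assignment step: showing that every exceptional vertex $w$ can be paired with two clusters inside a \emph{single} $V(H_{i(w)})$ of matching colour. This requires carefully combining the high minimum degree of $w$ with the overlap/coverage condition~\ref{itm:distr-exc-i} and the different-colour condition~\ref{itm:distr-exc-different-colours}, in order to rule out the bad cases where $w$'s heavy-colour neighbourhood avoids the matching $H_i$ almost entirely. Once the assignment is done, sparsity of $V(D)$ follows because each cluster contributes $O(|W|/m+1)=o(\sqrt{\delta}|V_j|)$ vertices, and the parity of $|V(G)\sm V(D)|$ is fixed at the end by optionally inserting the reserved dummy vertex into $D_1$.
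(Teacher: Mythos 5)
Your overall architecture (skeleton cycles witnessing every edge of each $H_i$, bridge vertices to splice the two components of $H_2$, connection paths to string everything together) matches the paper's, but there are two genuine gaps. The fatal one is quantitative: you route each exceptional vertex $w\in W$ individually, inserting a visit to $V_{a(w)},V_{b(w)}$ into the walk $\pi_i$ and realising each such detour with a fresh application of Lemma~\ref{lem:connecting-path}. Since $W$ can have up to $\delta n$ vertices and consecutive designated clusters can be far apart in $H_i$, the walk has length $\Omega(m|W|)$, and the resulting cycle can have more than $n$ vertices and in any case cannot be kept $\sqrt{\delta}$-sparse in $\V$ (a single application of the connection lemma already costs up to $m$ vertices, and you would need $|W|\gg 1/\delta$ of them). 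The paper's key idea, which is missing from your proposal, is to reduce to a \emph{bounded} number of connection-lemma applications: it defines an auxiliary graph on $W$ (two exceptional vertices adjacent if they share many common colour-$i$ neighbours inside $Z=\bigcup_{x\in\Z}V_x$, where $\Z$ is the overlap of the $H_i$'s), shows its independence number is at most $r=O(1/\beta)$, and uses P\'osa's theorem to partition $W$ into $O(1/\beta)$ cycles, each of which becomes a single long path alternating between exceptional vertices and common neighbours in $Z$; only these $O(\beta^{-2})$ paths then need connecting. Relatedly, your assignment step (which you flag as the main obstacle but do not resolve) does not work as sketched: the clusters receiving many colour-$\chi$ edges from $w$, where $\chi$ is $w$'s globally majority colour, need not meet $\bigcup\{V(H_i):H_i\text{ of colour }\chi\}$ at all. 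The fix is to first restrict to $Z$ (where $\deg_G(w,Z)\ge\beta n/2$ by hypothesis~\ref{itm:distr-exc-intersection} and the minimum degree), take the majority colour \emph{there}, and use that every cluster of $\Z$ lies in $H_i$'s of both colours by~\ref{itm:distr-exc-different-colours}.

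The second gap is parity. You propose to fix the parity of $|V(G)\setminus V(D)|$ by ``optionally inserting the reserved dummy vertex into $D_1$'', but inserting a vertex into a monochromatic cycle requires two suitably coloured edges to consecutive cycle vertices, and when the relevant colour class is bipartite no odd modification of this kind exists. Tellingly, your argument never uses hypothesis (ii) nor the odd-cycle clause of~\ref{itm:distr-exc-spanning-and-empty}, which are exactly what the parity correction needs: the paper either exploits an odd cycle in some $H_i$ (or in a component $F_j$ of $H_2$) to swap one connecting path for one of opposite parity, or, when all the $H_i$ are bipartite, uses that $H_3=\emptyset$ so that $D_3$ can be taken to be a single vertex. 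You should incorporate this case distinction explicitly.
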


To complete the proof, we aim to extend the cycles $D_1,D_2,D_3$ to
monochromatic cycles partitioning $V(G)$. To do this, we will replace in each
$D_i$ some of the edges $f_e$ (as in Definition~\ref{def:spe}) with longer
paths $P_e$ of the correct colour which are contained in the regular pair
corresponding to $e\in E(\RdG)$ such that, overall, we cover all vertices
of~$G$. Any such path uses the same number~$t'_e$ of
vertices in each of the two clusters, which is the reason why we require in the
conclusion of Lemma~\ref{lem:distribute-exceptional-vertices} that $|V(G)\sm
	V(D)|$ is even.

But how do we choose the numbers~$t'_e$?
For explaining this, we remark first that our strategy is to embed all the paths
$P_e$ vertex-disjointly using only one application of the blow-up lemma
(Lemma~\ref{lem:blowup}) with $\RdG'=\Mblow$.
We need $t'_e$ to be large for all
edges $e\in\Mblow$ (for satisfying condition~\ref{B:X} of
Lemma~\ref{lem:blowup}).
More precisely, we let $z=\lfloor\tfrac{n}{4m}\rfloor$ and require for all
edges~$e$ that
$t'_e\ge\omega(e)z$  where $\omega(f)=2$ if $e$ is on an edge of $\Mblow$, $\omega(f)=1$ if $e$ is on a cycle of $\Mblow$, and $\omega(e)=0$ otherwise.  Changing variables by setting
\begin{equation}\label{eq:change-vars}
	t_e=t'_e-\omega(e)z=t'_e-\omega(e)\lfloor\tfrac{n}{4m}\rfloor\,,
\end{equation}
this is equivalent to requiring~$t_e\ge0$.
Moreover, as we need to
cover all vertices in each cluster $V_i\in\RdG$,  we require
\begin{equation}\label{eq:cover-all}
	\big|V_i\setminus V(D)\big| = \sum_{e\in E(\RdG)\colon i\in e} t'_e
	= \sum_{e\in E(\RdG)\colon i\in e} (t_{e}+\omega(e)z)
	= 2z + \sum_{e\in E(\RdG)\colon i\in e} t_{e}
	\,.
\end{equation}
The following lemma guarantees that such non-negative values $t_e$ exist. When
we apply this lemma, we shall set $t_i=\big|V_i\setminus V(D)\big|-2z$, hence
all these~$t_i$ will be very similar in size.

\begin{lemma}[Balancing lemma]\label{lem:balancing}
	Given $m$ and $0<\gamma\le\frac12$, if $t\ge 5m/\gamma$ the following holds. Let
	$H$ be a connected graph on~$m$ vertices that is $\gamma$-robust Tutte. Let
	$(t_i)_{i\in V(H)}$ be integers such that $\sum_it_i$ is even and such that
	$t_i=\big(1\pm \tfrac15\gamma\big)t$ for each $i\in V(H)$. Then there
	exist non-negative integers $(t_e)_{e\in E(H)}$ such that for each $i\in V(H)$
	we have
	$\sum_{e\in E(H)\colon i\in e}t_e=t_i$.
\end{lemma}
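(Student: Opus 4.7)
\textbf{Proof sketch for Lemma~\ref{lem:balancing}.}

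The sought integers $(t_e)$ are precisely the edge-multiplicity functions of multigraphs on $V(H)$, with edges supported in $E(H)$, and with vertex-degree sequence $(t_i)$. My plan is to realise such a multigraph as the projection of a perfect matching in a suitable blow-up graph, and to verify Tutte's matching condition for this blow-up using the $\gamma$-robust Tutte hypothesis on $H$.

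Let $H^*$ be the blow-up of $H$ in which each vertex $i \in V(H)$ is replaced by an independent set of $t_i$ copies, and for each edge $ij \in E(H)$ one places a complete bipartite graph between the copies of $i$ and those of $j$. A perfect matching of $H^*$ yields the desired $(t_e)$ by taking $t_{ij}$ to be the number of matching edges between the two clusters corresponding to $ij$, and $|V(H^*)| = \sum_i t_i$ is even by hypothesis. So by Tutte's theorem it suffices to verify $o(H^* - S) \le |S|$ for every $S \subseteq V(H^*)$, where $o$ denotes the number of odd components.

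Fix such $S$, and set $T := \{i \in V(H) : \text{all copies of } i \text{ lie in } S\}$, $A := V(H) \setminus T$, and $A^* := \{v \in A : N_H(v) \subseteq T\}$; write $c_v := t_v - |S \cap \mathrm{copies}(v)|$ for $v \in A$. Since copies of $v \in V(H)$ form an independent set of $H^*$ while copies across an $H$-edge form a complete bipartite graph, the components of $H^* - S$ decompose as $c_v$ singletons for each $v \in A^*$, plus one component per component of $H[A \setminus A^*]$; hence $o(H^* - S) \le |A^*|(1+\gamma/5)t + m$. The set $A^*$ is stable in $H$ (if $u, v \in A^*$ were adjacent then $v \in N_H(u) \subseteq T$, contradicting $v \in A$), so whenever $A^* \ne \emptyset$ the $\gamma$-robust Tutte hypothesis gives $|T| \ge |N_H(A^*)| \ge |A^*| + \gamma m$, and thus $|S| \ge (|A^*| + \gamma m)(1-\gamma/5)t$. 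A direct calculation using $|A^*| \le m$, $t \ge 5m/\gamma$, and $\gamma \le 1/2$ then establishes $|S| > o(H^* - S)$. When $A^* = \emptyset$, either $T = \emptyset$ and $H^* - S$ has a single component whose size has the same parity as $|S|$ (using $\sum_i t_i$ even), giving $o(H^* - S) \le 1 \le |S|$ in the only nontrivial subcase; or $T \ne \emptyset$ and $|S| \ge (1-\gamma/5)t \ge 9m \ge o(H^* - S)$.

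The main conceptual hurdle is settling on the right auxiliary graph $H^*$; once it is in place, Tutte's condition reduces to an arithmetic check. The $\gamma$-robust Tutte hypothesis enters precisely to bound $|T|$ from below in terms of $|A^*|$, counteracting the potentially large contribution of isolated copies to $o(H^* - S)$, while the assumption $t \ge 5m/\gamma$ absorbs the remaining $O(m)$ error from components of $H[A \setminus A^*]$.
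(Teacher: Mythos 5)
Your proof is correct, but it takes a genuinely different route from the paper's. You blow each vertex $i$ up into $t_i$ copies and extract the weights $(t_e)$ from a perfect matching of the blow-up, verifying the classical Tutte condition $o(H^*-S)\le|S|$ for arbitrary $S$; the parity hypothesis on $\sum_i t_i$ is absorbed into the odd-component count (your subcase $T=\emptyset$), and the $\gamma$-robust Tutte hypothesis enters, correctly, to bound $|T|\ge|N_H(A^*)|\ge|A^*|+\gamma m$ against the up to $|A^*|(1+\gamma/5)t$ isolated copies. The paper instead first fixes parity combinatorially: it roots a spanning tree of $H$ and sets $\pi_e\in\{0,1\}$ on tree edges so that each residual $t_i-\sum_{e\ni i}\pi_e=2n_i$ is even, then blows each vertex up into only $n_i\approx t_i/2$ copies and applies the stable-set form of Tutte's theorem for perfect $2$-matchings (Theorem~\ref{thm:tutte-2-matching}), which requires checking only $|N(S)|\ge|S|$ for stable sets $S$ that one may take to be unions of whole blow-up classes. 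What each approach buys: the paper's version reuses a theorem it has already stated and needs for Lemma~\ref{lem:components}, and its verification is a two-line computation with no case analysis over general cut sets; yours dispenses with the spanning-tree parity gadget entirely at the cost of invoking the classical (odd-components) Tutte theorem, which the paper never states, and of the $T/A/A^*$ case distinction. I checked your arithmetic: in the main case $(|A^*|+\gamma m)(1-\gamma/5)t-|A^*|(1+\gamma/5)t\ge\gamma mt(3/5-\gamma/5)\ge\gamma mt/2\ge 5m^2/2\ge m$, which indeed dominates the additive $m$ from components of $H[A\setminus A^*]$, and the remaining subcases go through as you indicate. If you write this up in full, do state the version of Tutte's theorem you are using and note explicitly that any $H$-neighbour in $A$ of a vertex of $A\setminus A^*$ again lies in $A\setminus A^*$, so that the component count of $H^*-S$ really does split as you claim.
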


We prove Lemma~\ref{lem:balancing} in Section~\ref{sec:balancing}. This
concludes our proof sketch in the case that $\RdG$ is not close to an extremal
colouring.

The final case to be considered is that $\RdG$ is close to an extremal colouring
but two components admit bridges. In this case we let $H_2$ be the union of
these two components, and let $H_1$, $H_3$ be the other two components (it is
possible that $H_3=\es$). It turns out that the above proof sketch also works in
this setting.

We are now ready to give the details of the proof of Theorem~\ref{thm:main}.

\begin{proof}[Proof of Theorem~\ref{thm:main}]
	We will start by fixing some constants. Given $\beta>0$ we may
	assume that $\beta\le 1/3$. We  choose $\gamma,d,\delta>0$, and $m_0$ such that
	\begin{equation}\label{eq:const1}
		\frac{1}{m_0}\ll \delta \ll d\ll \gamma\ll\beta
	\end{equation}
	are small enough so that we can apply
	Lemma~\ref{lem:components} with input $\frac{1}{m_0}\ll \gamma$,
	Lemma~\ref{lem:extremal-colouring}
	with input $\frac{1}{m_0} \ll d\ll 4\gamma\ll\beta$
	and
	Lemma~\ref{lem:distribute-exceptional-vertices}
	with input $\frac{1}{m_0}\ll\delta \ll d \ll\beta$.
	The blow-up lemma (Lemma~\ref{lem:blowup}) with input
	\[\Delta=2, \quad \alpha=\tfrac15, \quad \zeta=d/2, \quad d/2 \quad\text{and}\quad \kappa=2\]
	provides us with constants $\eps'>0$ and $\rho>0$.
	We choose $\eps$ such that $0<2\eps\le\eps'$,
	and
	\begin{equation}\label{eq:const2}
		\eps\ll\delta.
	\end{equation}
	Note that our choices guarantee we can use
	Lemma~\ref{lem:extremal-colouring}
	with input $\eps,\frac{1}{m_0} \ll d\ll 4\gamma\ll\beta$
	and
	Lemma~\ref{lem:distribute-exceptional-vertices}
	with input $\eps,\frac{1}{m_0}\ll\delta \ll d \ll\beta$.
	The regularity lemma (Lemma~\ref{lem:regularity}) provides us with a
	constant~$M$
	for input~$\eps$ and~$m_0$.
	Finally, we choose~$n_0$ such that
	\begin{equation}\label{eq:const3}
		n_0^{-1}\ll M^{-1}\le m_0^{-1}
	\end{equation}
	is sufficiently small for Lemma~\ref{lem:blowup},
	Lemma~\ref{lem:extremal-colouring}, and
	Lemma~\ref{lem:distribute-exceptional-vertices} (with inputs as set above, but
	with~$M$ instead of~$m_0$).
	Assume that $n\ge n_0$.
	We further assume that our choice of constants in~\eqref{eq:const1},
	\eqref{eq:const2}, and \eqref{eq:const3} is such that these constants are
	chosen sufficiently small in terms of each other for various estimates we will
	use in the proof, for example for the application of Lemma~\ref{lem:balancing}
	below.

	Given an $n$-vertex graph $G$ with minimum degree
	$\delta(G)\ge\big(\tfrac23+\beta\big)n$, whose edges are coloured with red and
	blue, let $\V=\{V_1,\dots,V_m\}$ be a regular partition of $G$ with
	exceptional set~$V_0$, as provided by Lemma~\ref{lem:regularity} for input
	$\eps,m_0,d$. We have $m_0\le m\le M$. Let $\RdG$ be the corresponding
	coloured $(\eps,d)$-reduced multi-graph of $G$.
	By~\eqref{equ:reduced-graph-min-deg} and
	by the choice of $\gamma$ we have $\delta(\RdG)\ge\big(\tfrac23+8\gamma\big)m$.
	Hence we can apply Lemma~\ref{lem:components} with input~$m$ and~$\gamma$ to the
	coloured multi-graph~$\RdG$, which tells us that $\RdG$ is $(4\gamma)$-extremal or there are monochromatic
	components $C_1,C_2,C_3\subset\RdG$ such that $C :=\bigcup C_i$ spans~$\RdG$ and
	\ref{itm:robust-matching}--\ref{itm:connectivity} of
	Lemma~\ref{lem:components} hold.
	We distinguish three cases and define subgraphs $H_1,H_2,H_3 \subset \RdG$ as follows:
	\begin{enumerate}[label=\abc]
		\item\label{case:gen} $\RdG$ is not $(4\gamma)$-extremal. In this case we set $H_i:=C_i$ for each $i=1,2,3$.
		\item\label{case:nearex} $\RdG$ is $(4\gamma)$-extremal and some pair of components admit bridges.
		\begin{enumerate}[label=\arab]
			\item If $\RdG$ is close to the colouring of Definition~\ref{def:extremal-colouring}\ref{conf:bipartite}, with one red component $R_1$ and two blue components $B_1,B_2$ which admit bridges, we set $H_1:=R_1$, $H_2:=B_1\cup B_2$ and $H_3=\es$.
			\item If $\RdG$ is close to the colouring of Definition~\ref{def:extremal-colouring}\ref{conf:bipartite}, with one blue component $B_1$ and two red components $R_1,R_2$ which admit bridges, we set $H_1:=B_1$, $H_2:=R_1\cup R_2$ and $H_3=\es$.
			\item If $\RdG$ is close to the colouring of
			      Definition~\ref{def:extremal-colouring}\ref{conf:four-cycle}, with two red
			      components $R_1,R_2$ and two blue components $B_1,B_2$
			      such that $(B_1,B_2)$ admits bridges, we set $H_1:=R_1$, $H_2:=B_1\cup B_2$ and $H_3=R_2$.
			\item If $\RdG$ is close to the colouring of
			      Definition~\ref{def:extremal-colouring}\ref{conf:four-cycle}, with two
			      red components $R_1,R_2$ and two blue components $B_1,B_2$
			      such that $(R_1,R_2)$ admits bridges,
			      we set $H_1:=B_1$, $H_2:=R_1\cup R_2$ and $H_3=B_2$.
		\end{enumerate}
		\item\label{case:done} None of the above cases apply.
	\end{enumerate}
	In case~\ref{case:done}, by Lemma~\ref{lem:extremal-colouring} there exist three monochromatic cycles which partition the vertices of~$G$, as desired. We may therefore assume one of the other two cases holds. Let~$H$ be the uncoloured spanning subgraph of~$\RdG$ whose edges are all pairs lying in at least one of~$H_1$,~$H_2$ and~$H_3$. We claim that (in either of the two cases) all of the following hold.
	\begin{enumerate}[label=\itmrom{\normalfont H:}]
		\item\label{H:robust-matching} $H$ is $\gamma$-robust Tutte.
		\item\label{H:overlap} One of the following holds
		\begin{enumerate}[label=\abc]
			\item\label{H:true-overlap} $|\bigcup_{i\neq j} V(H_i) \cap V(H_j)| \geq (1/3 + \gamma)m$ or
			\item\label{H:spanning-and-empty} $H_1$ is spanning, $H_1$ or $H_2$ contains an odd cycle, and $H_3 = \es$.
		\end{enumerate}
		\item\label{H:odd-monochromatic-cycle} If $H_1,H_2,H_3$ are each bipartite, then $H_3=\es$.
		\item\label{H:comps} $H_1$ and $H_3$ are monochromatic components of $\RdG$, while $H_2$ is the union of two monochromatic components of $\RdG$ of the same colour which admit bridges.
		\item \label{H:intersect-distinct-colour}  If $V(H_i)\cap V(H_j)\neq\emptyset$, then $H_i$ and $H_j$ are of different colours.
		      \item\label{H:connectivity} $H$ is connected.
	\end{enumerate}

	These statements hold in case~\ref{case:gen} by Lemma~\ref{lem:components},
	with each $H_i$ being a distinct monochromatic component of $\RdG$ (and so $H_2=C_2\cup
		C_2$, and as observed after Definition~\ref{def:bridges}, $(C_2,C_2)$ admits bridges trivially). In
	case~\ref{case:nearex}, $H_1$ and $H_3$ are distinct monochromatic components of $\RdG$
	and $H_2$ is by construction the union of two monochromatic components of
	$\RdG$ of the same colour which admit bridges, hence we get~\ref{H:comps}.
	Further, there are no other other monochromatic components in~$\RdG$ by
	Definition~\ref{def:extremal-colouring}, hence we get~\ref{H:connectivity} and
	$H=\RdG$ has minimum
	degree $\delta(H)\ge 2m/3$. It follows that no stable set has more than $m/3$
	vertices and every stable set has a neighbourhood of size at least $2m/3$,
	giving~\ref{H:robust-matching}. It is easy to verify
	that also~\ref{H:overlap}\ref{H:true-overlap}
	and~\ref{H:odd-monochromatic-cycle} hold.
	From this point on, we do not need to distinguish cases~\ref{case:gen}
	and~\ref{case:nearex}; the above list of properties is all we need.

	By~\ref{H:robust-matching} and Theorem~\ref{thm:tutte-2-matching}, there is an (uncoloured)
	perfect $2$-matching  $\Mblow$ in $H$. Let $\Vexc$ consist of $V_0$ together with, for each $i\in V(\RdG)$, all
	vertices $v\in V_i$ such that $v$ does not have typical degree in any pair
	$(V_i,V_j)$ with $ij\in E(\Mblow)$ in any colour such that $\RdG$ has an edge
	$ij$ of this colour. By~\eqref{equ:typical-degree-in-reg-pair}, there are at
	most $4\eps|V_i|$ such vertices in each $V_i$, so that $\Vexc$ is
	$\delta$-sparse in $\V=\{V_1,\dots,V_m\}$.

	We apply Lemma~\ref{lem:distribute-exceptional-vertices} to~$G$ with the
	partition~$\V$ with reduced multi-graph~$\RdG$, and with $W=\Vexc$. The
	conditions of the lemma are satisfied
	by~\ref{H:overlap},~\ref{H:odd-monochromatic-cycle} and~\ref{H:comps}. We
	obtain pairwise vertex disjoint monochromatic cycles $D_1,D_2,D_3$ in $G$, such
	that for each $i$ the colour of $D_i$ is the same as the colour of $H_i$, and
	$D_i$ supports path embeddings in $H_i$. Furthermore, for $D:=D_1\cup D_2\cup
		D_3$ the set $V(D)$ is $\sqrt{\delta}$-sparse in $\V$, we have $\Vexc\subset V(D)$,
	and
	\begin{equation}\label{eq:main-even}
		\big|V(G)\setminus V(D)\big| \quad \text{is even}\,.
	\end{equation}
	For each $e\in H$, choose some $p(e)\in\{1,2,3\}$ such that $e\in
		E(H_{p(e)})$, and let~$f_{e}$ be the edge of~$D_{p(e)}$ witnessing that
	$D_{p(e)}$ supports path embeddings for~$e$.

	It remains to extend the cycles~$D_1$, $D_2$, $D_3$ to cycles covering all
	of~$G$, for which we shall use the blow-up lemma. This lemma requires a reduced
	graph without multi-edges, and hence we fix a (coloured) subgraph~$\RdG^*$ of
	$\RdG$ by deleting for each multi-edge in~$\RdG$ one of the two parallel edges,
	with the constraint that if $e\in H$ then the edge of the same colour as $f_e$
	is kept. Observe that this does not create problems with connectivity:
	Since~$D_i$ supports path-embeddings in~$H_i$ and we shall use this to
	extend~$D_i$ by merely replacing some of its edges by longer paths, the
	resulting graph will still be a cycle. Hence it only remains to check that we
	can do these extensions so that all vertices of~$G$ get covered. For this we shall use
	the blow-up lemma with reduced graph $\RdG^*$ and with $\RdG'=\Mblow$.

	Let us next check that the vertices not covered by~$D_1$, $D_2$, and~$D_3$
	indeed form a regular partition which is super-regular on the pairs
	corresponding to $\Mblow$, so that we can apply the blow-up lemma. For each
	$i\in [m]$ let $V_i^*=V_i\setminus V(D)$ and let $G^*$ be a graph on vertex set
	$V(G)\setminus V(D)$ that contains for each $ij\in\RdG^*$ of some colour~$\chi$
	the edges of~$G$ in the regular pair $(V_i,V_j)$ of colour~$\chi$ that have one
	endpoint in~$V^*_i$ and the other in~$V^*_j$. Consider the partition
	$\V^*=\{V^*_1,\dots,V^*_m\}$. Observe that since $\Vexc\subset V(D)$, for
	each $ij\in\Mblow$ and for any vertex in $v\in V^*_i$ we have
	$$\deg_{G^*}(v,V^*_j)\ge(d-\eps)|V_j|-\sqrt{\delta}|V_j|>d|V^*_j|/2.$$
	Moreover, by~\eqref{eq:slicing}
	for each edge $ij\in E(\RdG^*)$
	the pair $\big(V^*_i,V^*_j\big)$ in~$G^*$ is $(2\eps,d/2)$-regular.
	In other words, $G^*$ with the partition $\V^*$ is
	$(2\eps,d/2)$-regular on $\RdG^*$ and $\big(V^*_i,V^*_j\big)$ is
	$(2\eps,d/2)$-super-regular for each $ij\in\Mblow$.
	Hence~$G^*$, with the partition~$\V^*$ and the graph~$\RdG^*$ with subgraph
	$\RdG'=\Mblow$, satisfies Lemma~\ref{lem:blowup}\ref{B:V}.

	As explained before, for edges $e\in H$ we shall replace the edge $f_e$ (which
	is part of some~$D_i$) by a path in the same colour of length~$t'_e$. Let us
	now choose the values~$t'_e$ with the help of Lemma~\ref{lem:balancing}. To
	this end, set $z:=\lfloor\tfrac{n}{4m}\rfloor$ and for each $i\in[m]$, set
	$t_i:=\big|V^*_i\big|-2z$.
	Observe that $\sum_{i\in[m]}t_i=|V(G^*)|-2mz$, which is even
	by~\eqref{eq:main-even}. Moreover, since
	$(1-\eps)n/m\le|V_i|\le n/m$ holds by
	Lemma~\ref{lem:regularity}\ref{itm:reg-size}, and since $V(D)$ is
	$\sqrt{\delta}$-sparse in $\V$, we have $(1-\eps)n/m-\sqrt{\delta} n/m\le
		|V^*_i|\le n/m$ and hence $t_i=(1\pm 3\sqrt{\delta})n/(2m)=(1\pm
		\frac15\gamma)n/(2m)$ for each $i\in V(H)$. Since~$H$ is $\gamma$-robust Tutte
	by~\ref{H:robust-matching} and connected by~\ref{H:connectivity}, we can
	apply Lemma~\ref{lem:balancing} to obtain non-negative integers $(t_e)_{e\in
				E(H)}$ such that $\sum_{e\in E(H):i\in e}t_{e}=t_i$
	holds for each $i$. Setting $t'_e:=t_e+\omega(e) z$,  where $\omega(f)=2$ if $e$ is on an edge of $\Mblow$, $\omega(f)=1$ if $e$ is on a cycle of $\Mblow$, and $\omega(e)=0$ otherwise, we obtain
	\begin{equation}\label{eq:main-te}
		\sum_{e\in E(H)\colon i\in e}t'_{e}=\big|V^*_i\big|
		\quad
		\text{and}
		\quad
		t'_{\hat e}\ge z\ge\frac{n}{5m}
	\end{equation}
	for each $i\in[m]$ and each $\hat e\in\Mblow$.

	We now construct a graph~$F$ containing the paths that
	we shall embed with the help of the blow-up lemma, and define a partition
	$V(F)=X_1\cup\dots\cup X_m$ as follows. For each $e=ij\in H$ with $t'_e>0$, we
	add a path $P_e$ with $2t'_e$ vertices to $F$, whose vertices we put
	alternately to $X_i$ and $X_j$. Thus all edges of $F$ lie on pairs $(X_i,X_j)$
	such that $ij\in H$. For each $i\in V(H)$, we let $\widetilde{X}_i\subset X_i$
	be the vertices lying on paths between $X_i$ and some $X_j$ such that $ij\in\Mblow$.
	By~\eqref{eq:main-te} we have $|\widetilde{X}_i|\ge\frac{n}{5m}\ge|V^*_i|/5$
	and $|X_i|=|V^*_i|$ for each $i\in[m]$. Hence~$F$ and the partition
	$X_1\cup\dots\cup X_m$ satisfy Lemma~\ref{lem:blowup}\ref{B:X}.

	Finally, we set up image restrictions~$I_x$ for $x\in V(F)$ as follows, to
	guarantee that when we embed the paths in~$F$ they connect appropriately to the
	corresponding~$D_i$. If $x\in X_i$ has degree~$2$ in $F$, we set $I_x=V^*_i$.
	Otherwise, $x$ is an end-vertex of a path whose vertices are in $X_i\cup X_j$
	and by definition, the edge $e=ij$ lies in~$H_{p(e)}$. In this case let~$z$ be
	the end-vertex of $f_e\in E(D_{p(e)})$ that is in $X_j$ and let~$I_x$ be
	the $G^*$-neighbours~$z'$ of~$z$ in~$X^*_i$. Observe that by definition
	of~$G^*$ each such edge~$zz'$ has the same colour as $f_e$.
	Since~$D_i$ supports path embeddings in~$H_i$, the vertex~$z$ is typical in
	$(V_i,V_j)$ and hence has at least $(d-\eps)|V_i|-\sqrt{\delta}|V_i|\ge\frac12 d|V^*_i|$
	neighbours in~$V^*_i$.
	Further, since for each edge $e$ of $\RdG^*$ we choose a unique $f_e\in E(D)$,
	for each $i\in[m]$ there are at most $m-1\le \rho|X_i|$ vertices $x\in X_i$ with $I_x\neq
		V^*_i$. Hence the $I_x$ satisfy Lemma~\ref{lem:blowup}\ref{B:I}.

	Thus, applying Lemma~\ref{lem:blowup}, we find an embedding~$\phi$ of~$F$
	into~$G^*$ such that for each $x\in V(F)$ we have $x\in I_x$. We  combine
	the embedding of~$F$ with the cycles $D_1,D_2,D_3$, by replacing for each $e\in
		H$ such that $t'_e>0$ the edge $f_e=uv$ on the cycle $D_{p(e)}$ with a path
	as follows. Assume without loss of generality that $u\in V_i$ and $v\in V_j$
	and that the first vertex~$w$ of $\phi(P_e)$ is in~$V_j$ and the last
	vertex~$w'$ of $\phi(P_e)$ is in~$V_i$. Then we replace $f_e$ by the path
	$u,\phi(P_e),v$ to obtain a longer monochromatic cycle. This is possible since
	$\phi(P_e)$ has the same colour $\chi$ as~$f_e$ by the definition of~$G^*$, and since, by the definition of the sets
	$I_x$, also  $uw$ and $w'v$  have colour $\chi$.
	We thus obtain three monochromatic cycles
	$D'_1,D'_2,D'_3$, whose vertices  partition $V(G)$.
\end{proof}


\section{Finding components: The proof of Lemma~\ref{lem:components}}\label{sec:component-lemma}
This section is dedicated to the proof of Lemma~\ref{lem:components}.
Given $\gamma >0$, let $G :=\RdG$ be a red and blue edge coloured graph on $n :=m$ vertices and of minimum degree at least $(2/3 + 8\gamma) n$.
Suppose that the colouring of $G$ is not $(4\gamma)$-extremal.
Our goal is to find monochromatic components $C_1,C_2,C_3 \subset G$  whose union spans $G$ and satisfies \ref{itm:robust-matching}--\ref{itm:connectivity} of Lemma~\ref{lem:components}.

We start by showing that $G$ is spanned by two monochromatic components.
\begin{claim}\label{cla:cover-by-C1-C2}
	There are monochromatic components $C_1, C_2$ that together span $G$.
\end{claim}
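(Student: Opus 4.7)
The plan is to argue by contradiction: assume that no two monochromatic components cover $V(G)$, and derive a contradiction from the minimum degree assumption via a short counting argument. The natural auxiliary object is the bipartite \emph{component graph} $\Gamma$ whose vertex classes are, respectively, the red and the blue components of $G$, where each vertex $v\in V(G)$ contributes one edge connecting $[v]_{\r}$ to $[v]_{\b}$ (parallel edges allowed). Selecting two monochromatic components whose union spans $G$ is exactly the same as finding a vertex cover of size~$2$ in $\Gamma$, so by K\"onig's theorem it suffices to rule out a matching of size~$3$ in $\Gamma$.

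Suppose then that there exist vertices $v_1,v_2,v_3\in V(G)$ whose red components are pairwise distinct and whose blue components are pairwise distinct. Two simple observations set up the counting. First, $\{v_1,v_2,v_3\}$ is an independent set in $G$, since any edge $v_iv_j$ would lie in some colour and thereby place $v_i,v_j$ in a common monochromatic component of that colour. Second, $N(v_1)\cap N(v_2)\cap N(v_3)=\emptyset$: a common neighbour $w$ would be joined to the three $v_i$'s by edges which are each red or blue, and by pigeonhole two of them would share a colour, forcing the two corresponding $v_i$'s into a common monochromatic component.

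The two observations combine to give, for each pair $i\neq j$ in $\{1,2,3\}$ with $k$ the remaining index, that $N(v_i)\cap N(v_j)\subset V(G)\setminus(N(v_k)\cup\{v_k\})$; hence $|N(v_i)\cap N(v_j)|\le n-1-\deg(v_k)\le(\tfrac13-8\gamma)n-1$. Applying inclusion-exclusion (with the triple intersection vanishing by the previous step) together with the minimum degree bound yields
\[
 |N(v_1)\cup N(v_2)\cup N(v_3)|\;\ge\;3\bigl(\tfrac23+8\gamma\bigr)n - 3\bigl((\tfrac13-8\gamma)n-1\bigr) \;=\; (1+48\gamma)n+3\,,
\]
while the independence of $\{v_1,v_2,v_3\}$ forces $N(v_1)\cup N(v_2)\cup N(v_3)\subset V(G)\setminus\{v_1,v_2,v_3\}$, of size at most $n-3$. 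The two bounds contradict each other for $\gamma>0$, so no such triple exists and the desired pair of monochromatic components $C_1,C_2$ arises from the K\"onig vertex cover.

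The main conceptual hurdle is choosing the bipartite reformulation and observing that the obstruction to a cover by two monochromatic components (a matching of size~$3$ in $\Gamma$) is already ruled out by the degree condition alone, with no use of the non-extremality assumption; after that, the verification is a routine inclusion-exclusion calculation.
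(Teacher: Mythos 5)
Your proof is correct and follows essentially the same route as the paper: both reduce the claim to showing that the bipartite red-component/blue-component incidence graph has matching number at most $2$ and then invoke K\H{o}nig's theorem to extract a two-vertex cover, i.e.\ two spanning components. The only difference is the counting step used to exclude a $3$-matching: the paper sums the monochromatic degrees of the witnesses $v_1,\dots,v_q$ (using that distinct red components, and distinct blue components, are pairwise disjoint) to get $\tfrac23 nq\le 2(n-q)$ directly, whereas you run inclusion--exclusion on the full neighbourhoods via the empty triple intersection; both arguments are valid and of comparable length.
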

\begin{claimproof}
	Let $\mathcal C$ be the set of monochromatic components of $G$.
	Let $v_1, \ldots,v_q$ be a maximal number of vertices such that for distinct $i,j\in [q]$, vertices $v_i$ and~$v_j$ are both in distinct red and in distinct blue components. Then \[
		\frac{2}{3}nq  \leq \sum_{i\in[q]} \deg(v_i) =  \sum_{i\in[q]} d_\text{red}(v_i) + \sum_{i\in[q]} d_\text{blue}(v_i) \leq 2(n - q),
	\] and therefore, $q\le 2$.
	Now consider the bipartite graph $H$ whose partition classes are the red and blue components respectively and which has an edge $CC'$ whenever and $V(C)\cap V(C')\neq\emptyset$.
	Observe that the size of any matching in $H$ is at most $q\le 2$.
	So by K\H onig's theorem, the edges of~$H$ are covered by two vertices. These vertices are monochromatic components spanning $G$.
\end{claimproof}

We fix monochromatic components $C_1$ and $C_2$ that together span the vertices of $G$ and distinguish three cases:
\begin{description}[labelwidth=*]
	\item[Case 1] One of $C_1$ and $C_2$ is spanning,
	\item[Case 2] $C_1$ and $C_2$ have distinct colours,
	\item[Case 3] $C_1$ and $C_2$ have the same colour.
\end{description}

We introduce the notion of {contracting sets}, which will be convenient when arguing about  part~\ref{itm:robust-matching} of Lemma~\ref{lem:components}.

\begin{definition}\label{def:contracting-set}
	For a spanning subgraph $H \subset G$, we call a stable set $S \subset V(G)$ \emph{contracting} in~$H$, if $|N_H(S)|  < |S| + \gamma n$.
\end{definition}

Note that if $S$ is contracting in $H$, then $2|N_H(S)| - \gamma n  < |S| +  |N_H(S)| \leq  n$ and, therefore,
\begin{equation}\label{equ:NS>1/2}
	|N_H(S)| <(1/2 + \gamma /2)n.
\end{equation}

\subsection{\texorpdfstring{Case 1: One of $C_1$ and $C_2$ is spanning}{Case 1: One of C1 and C2 is spanning}}
\label{cas:C1-spanning}
We assume that $R:=C_1$ is spanning and red.
We will show that either there exist one or two blue components which together with $R$ satisfy the conditions of Lemma~\ref{lem:components}, or the colouring is $(4\gamma)$-extremal as in Definition~\ref{def:extremal-colouring}\ref{conf:bipartite} in contradiction to our assumption.

Let us begin with the following observation. For a collection of blue components $\{B_i, i\in I\}$,
consider a subgraph $H = R\cup \bigcup_{i\in I} B_i \subset G$, and assume that some stable set $S$ is contracting in~$H$.
If $B$ is a blue component with $v \in S \cap V(B)$, then
\begin{equation}\label{equ:S>2/3-VB}
	|S| + \gamma n > |N_H(S)| \geq \deg_H(v) \geq (2/3 +8\gamma)n - |V(B)|.
\end{equation}
Furthermore, we must have that $B \neq B_i$, $i\in I$, since otherwise $|N_H(S)| \geq \deg_H(v)=\deg_G(v)\ge (2/3 + 8\gamma) n$, in contradiction to~\eqref{equ:NS>1/2}.
Hence, for any $B_i$, $i\in I$, we have
\begin{equation}\label{equ:C-cap-S-es}
	S \cap V(B_i)  =\es.
\end{equation}

The next claim summarizes a few observations on contracting sets.
\begin{claim}\label{cla:observation-on-contracting-sets}
	Let $H = R\cup \bigcup_{i\in I} C_i$ for blue components $C_i$, $i\in I$.
	Suppose $S$ is a contracting set in $H$, and let $B_1,\ldots,B_t$ be the blue components that have non-empty intersection with $S$.
	Then
	\begin{enumerate}[label=\abc]
		\item $t \leq 2$ and $\big|V(\bigcup_{j\in [t]} B_j)\big| \geq (1/3 + 3 \gamma)n$.
	\end{enumerate}
	In addition, we have the following:
	\begin{enumerate}[label=\abc] \setcounter{enumi}{1}
		\item\label{itm:obs-contracting-sets-D-bipartite}	If $S$ intersects with a bipartite blue component, then $t=1$.
		\item \label{itm:obs-sets-t=2}	If $t=2$, then $V(G) \sm N_H(S) \subset V(\bigcup_{j\in [t]} B_j)$ and $V(G)\sm V(\bigcup_{j\in [t]}  B_j) \subset N_{H}(S)$.
	\end{enumerate}
\end{claim}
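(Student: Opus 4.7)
The plan is to prove the three parts in turn, relying throughout on the bound $|S| \le (1/3 - 8\gamma)n$, which holds because each vertex of $S$ has $\delta(G) \ge (2/3 + 8\gamma)n$ neighbours all lying outside the stable set $S$.

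For part~(a), I pick $v_j \in S \cap V(B_j)$ for each $j \in [t]$ and apply~\eqref{equ:S>2/3-VB} to obtain $|V(B_j)| > (2/3 + 7\gamma)n - |S|$. Since the $B_j$ are pairwise vertex-disjoint, $\sum_{j=1}^{t} |V(B_j)| \le n$, so $t \ge 3$ would force $n \ge 3\bigl((2/3 + 7\gamma)n - |S|\bigr)$, i.e.\ $|S| > (1/3 + 7\gamma)n$, contradicting the bound on $|S|$. Substituting $|S| \le (1/3 - 8\gamma)n$ back yields $|V(B_j)| > (1/3 + 15\gamma)n$, so $|V(\bigcup_j B_j)| \ge (1/3 + 3\gamma)n$.

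For part~(b), I assume $B_1$ is bipartite with bipartition $V(B_1) = X \cup Y$ and that $t = 2$, aiming for a contradiction. The key refinement is that for $v \in S \cap X$ the blue edges of $v$ lie in $Y$, so $\deg_{B_1}(v) \le |Y|$ and rerunning the derivation of~\eqref{equ:S>2/3-VB} with $|Y|$ in place of $|V(B_1)|$ gives the stronger bound $|Y| > (2/3 + 7\gamma)n - |S|$; symmetrically if $v \in S \cap Y$. Combined with $|V(B_2)| > (2/3 + 7\gamma)n - |S|$ from~\eqref{equ:S>2/3-VB}, if $S$ meets both parts of $B_1$ then summing three copies of the bound against $|X| + |Y| + |V(B_2)| \le n$ forces $|S| > (1/3 + 7\gamma)n$, a contradiction. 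Otherwise $S \cap V(B_1)$ lies in one part, say~$X$; if $|X| \ge |Y|$ then $|V(B_1)| \ge 2|Y|$ and the same three-summation trick applies. The remaining subcase $|X| < |Y|$ with $S \cap V(B_1) \subset X$ forces $|S \cap V(B_2)| > 22\gamma n$, and is handled by a swap argument: replace $S \cap X$ by a suitable stable subset of~$Y$ (found using that $G[Y]$ carries only red edges and that few $y \in Y$ are red-adjacent to $S \cap V(B_2)$), producing a stable set meeting both parts of some bipartite component and reducing to the previous contradiction.

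For part~(c), assume $t = 2$ and let $D := V(G) \setminus V(B_1 \cup B_2)$. Every $w \in D$ lies in a blue component disjoint from $V(B_1 \cup B_2) \supset S$, so $w$ has no blue edges to $S$; since red edges belong to~$H$, this gives $N_H(S) = N_R(S)$ and reduces the task to showing $D \subset N_R(S)$. Suppose for contradiction some $w \in D$ has $N_R(w) \cap S = \emptyset$; then $S \cup \{w\}$ is stable in~$G$, and combining the lower bound $|N_R(w)| \ge (2/3 + 8\gamma)n - |D| + 1$ (from~\eqref{equ:S>2/3-VB} applied to $w$'s blue component), the contraction bound $|N_H(S)| < |S| + \gamma n$, the inclusion $N_G(S) \setminus N_H(S) \subset V(B_1 \cup B_2)$ (any non-$H$ edge of $G$ from $S$ is a blue edge within $B_1 \cup B_2$), and the bound $|D| < (1/3 - 30\gamma)n$ from~(a), one derives the required contradiction. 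The main obstacle I foresee is completing the swap in the final subcase of~(b) and aligning the counting in~(c) tightly enough to close the last inequality without over-using any single estimate.
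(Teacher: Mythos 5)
Your proof is built on the claim that $|S|\le(1/3-8\gamma)n$ ``because each vertex of $S$ has $\delta(G)\ge(2/3+8\gamma)n$ neighbours all lying outside the stable set $S$''. This is the fatal point: a contracting set is stable in $H=R\cup\bigcup_{i\in I}C_i$, not in $G$. Vertices of $S$ may well be joined by $G$-edges; these are precisely blue edges of the components $B_1,\dots,B_t$, which by~\eqref{equ:C-cap-S-es} are distinct from the $C_i$ and hence contribute no edges to $H$. In fact the opposite of your bound holds: when $t=2$ one has $|S|\ge(1/3+15\gamma)n$ (this is~\eqref{equ:If-S-large-then-t=2}), and contracting sets of size at least $(1/2-\gamma)n$ genuinely occur later in the argument (Claim~\ref{cla:Si>1/2}). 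Consequently every step of yours that closes with ``contradicting the bound on $|S|$'' is unsupported: your $t\le2$ argument in~(a) only shows that $t\ge3$ implies $|S|>(1/3+7\gamma)n$, which is not absurd; the lower bound $|V(B_j)|>(1/3+15\gamma)n$ does not follow; and the contradictions in~(b) evaporate (the remaining ``swap'' subcase is, as you note yourself, not actually carried out). Part~(c) also does not close: $D\subset N_R(S)$ is exactly what must be proved, and the bound $|D|<(1/3-30\gamma)n$ you invoke does not follow from~(a), which only gives $|D|\le(2/3-3\gamma)n$.

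The correct mechanism is an upper bound on the neighbourhood rather than on $S$: since $S$ is contracting, $|N_H(S)|<(1/2+\gamma/2)n$ by~\eqref{equ:NS>1/2}. Setting $W=V(G)\setminus(N_H(S)\cup S)$, each $v_j\in S\cap V(B_j)$ has at least $(2/3+8\gamma)n-|N_H(S)|$ neighbours in $S\cup W$, and all of these lie in $V(B_j)$, because edges from $S$ into $S\cup W$ are not in $H$, hence blue, hence inside $B_j$. Summing these disjoint sets inside $S\cup W=V(G)\setminus N_H(S)$ for $t\ge3$ forces $|N_H(S)|\ge(1/2+12\gamma)n$, the actual contradiction; for~(b) one doubles the count using two adjacent vertices of $S\cap V(B_1)$ in opposite bipartition classes, whose existence follows from $|S|\ge(1/3+15\gamma)n$ together with the minimum degree and stability of $S$ in $H$. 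Part~(c) then follows because $|S|\ge(1/3+15\gamma)n$ guarantees that every vertex outside $N_H(S)\cup S$ has a $G$-neighbour in $S$, and any such edge is blue and lies in $B_1$ or $B_2$. I would rework the whole proof around~\eqref{equ:NS>1/2} and the set $W$.
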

Before we prove Claim~\ref{cla:observation-on-contracting-sets} let us note that, by~\eqref{equ:C-cap-S-es}, the sets $V(B_j)$, $j\in [t]$, are distinct from the sets $V(C_i)$, $i\in I$.
\begin{claimproof}
	Set $W = V(G) \sm (N_{H}(S) \cup S)$  and note that
	\begin{equation}\label{equ:C-no-edges-in-S}
		\text{$H$ has no edges in $S$ and no edges between $S$ and $W$.}
	\end{equation}
	For each $j \in [t]$, fix a vertex  $v_j\in V(B_j) \cap S$ and observe that
	\begin{equation} \label{equ:ScupWcapB-geq-2/3-NS}
		|(S \cup W) \cap V(B_j)| \geq \deg_{B_j}(v_j,S \cup W) \overset{\eqref{equ:C-no-edges-in-S}}{\geq} (2/3  + 8 \gamma)n - |N_H(S)|.
	\end{equation}
	To prove $t \leq 2$, let us assume that $t \geq 3$ and obtain a contradiction.
Since $B_1, B_2, B_3$ are vertex disjoint, we have
	By~\eqref{equ:C-no-edges-in-S} we have
	\begin{align*}
		|S \cup W| & \geq \sum_{i \in [3]} |(S \cup W) \cap V(B_j)| \overset{\eqref{equ:ScupWcapB-geq-2/3-NS}}{\geq} (2+ 24 \gamma)n - 3|N_H(S)|.
	\end{align*}
	This, together with the fact that $V(G)=S \cup N_H(S) \cup W$, contradicts \eqref{equ:NS>1/2}:
	\begin{align*}
		3 |N_H(S)| + |S \cup W| & \overset{ }{\geq} (6/3  + 24 \gamma)n & \Leftrightarrow \nonumber \\
		2|N_H(S)|               & \geq(3/3  + 24 \gamma)n               & \Leftrightarrow \nonumber \\
		|N_H(S)|                & \geq(1/2  + 12 \gamma)n .              & \nonumber
	\end{align*}
	Hence $t \leq 2$.

	In preparation for proving parts~\ref{itm:obs-contracting-sets-D-bipartite} and~\ref{itm:obs-sets-t=2}, we show the following:
	\begin{align}\label{equ:If-S-large-then-t=2}
		\text{If $t=2$, then $|S| \geq (1/3 + 15\gamma)n$.}
	\end{align}
	Indeed, assuming that $t=2$, we have 
	\begin{align*}
	 |W| & \geq |W \cap V(B_1)| + |W \cap V(B_2)|\\
                    &\geq |(W\cup S) \cap V(B_1)| -  |S \cap V(B_1)| + |(W\cup S) \cap V(B_2)|   - |S \cap V(B_2)|\\
		    &\stackrel{\eqref{equ:ScupWcapB-geq-2/3-NS}}{\geq} 2 (2/3 + 8 \gamma)n  - 2|N_H(S)| - |S \cap (V(B_1)\cup V(B_2))|\\
		    &\geq  (4/3 + 16 \gamma)n  - 2|N_H(S)| - |S|.
	\end{align*}
        This, together with $V(G) = W \cup S \cup N_H(S)$, gives
	$|S| + \gamma n > |N_H(S)| \geq (1/3 + 16 \gamma)n$,
	which implies~\eqref{equ:If-S-large-then-t=2}.

	To obtain part~\ref{itm:obs-sets-t=2} suppose that $t=2$.
	It follows from $\delta(G)\ge (2/3 + 8\gamma) n$ and \eqref{equ:If-S-large-then-t=2} that any vertex in $V(G) \sm (N_{H}(S) \cup S)$ has an edge to~$S$. All such edges are not in $H$ by~\eqref{equ:C-no-edges-in-S}, so they are blue and in $B_1$ or $B_2$.
	Hence $V(G) \sm N_{H}(S) \subset V(\bigcup_{j\in [t]} B_j)$ and $V(G)\sm V(\bigcup_{j\in [t]} B_j) \subset N_H(S)$, as desired.

	To show part~\ref{itm:obs-contracting-sets-D-bipartite}, let us assume that $t=2$ and, without loss of generality, $B_1$ is bipartite.
	Consider any vertex $v_1 \in S\cap V(B_1)$.
	By~\eqref{equ:If-S-large-then-t=2} and since $S$ is stable in $H$, the vertex $v_1$ has a blue neighbour $v_2 \in S$.
	Clearly $v_1,v_2$ belong to distinct bipartition classes of $B_1$ and their neighbourhoods in $B_1$ are disjoint. Hence
	$$
		|(S \cup W) \cap V(B_1)| \geq \sum_{j=1}^2 \deg_{B_1}(v_j,S \cup W)| \overset{\eqref{equ:ScupWcapB-geq-2/3-NS}}{\geq} 2(2/3  + 8 \gamma)n - 2|N_H(S)|.
	$$
       We combine the last inequality with \eqref{equ:ScupWcapB-geq-2/3-NS} and obtain
	\begin{align*}
		|S \cup W| & \geq \sum_{i \in [2]} |(S \cup W) \cap V(B_j)| \geq (2+ 24 \gamma)n - 3|N_H(S)|.
	\end{align*}
	Using  the fact that $V(G)=S \cup N_H(S) \cup W$, we get again
	\begin{align*}
		3 |N_H(S)| + |S \cup W|  \overset{}  {\geq} ( {6}/{3}  + 24 \gamma)n  \overset{\eqref{equ:ScupWcapB-geq-2/3-NS}}  {\Rightarrow} 
		|N_H(S)|                             \geq( {1}/{2}  + 12 \gamma)n,                                                
	\end{align*}
	which is a contradiction to~\eqref{equ:NS>1/2}.
	This proves part~\ref{itm:obs-contracting-sets-D-bipartite}.

	Finally, to complete the proof of part (a), we must show that $|V(\bigcup_{j\in [t]} B_j)| \geq (1/3 + 3\gamma )n$.
	If $|S| \geq (1/3 +3 \gamma) n$, this follows from $S \subset V(\bigcup_{j\in [t]} B_j)$.
	Otherwise, we have $|S| < (1/3 + 3 \gamma) n$, and hence $t=1$ by~\eqref{equ:If-S-large-then-t=2}.
	Then
	$$
		(1/3 + 4 \gamma )n > |S| + \gamma n  \overset{\eqref{equ:S>2/3-VB}}{>} (2/3 +8\gamma)n - |V(B_1)|
	$$
	from which we obtain $|V(B_1)|  > (1/3 + 4 \gamma n)$ as desired.
\end{claimproof}

Now we show that we can select two blue components which together with $R$ satisfy the conditions of Lemma~\ref{lem:components}.
\begin{remark}\label{rem:connectivity-is-trivial}
	Note that since we always select $R$ and $R$ is spanning, part \ref{itm:connectivity} of Lemma~\ref{lem:components} holds trivially.
\end{remark}

\begin{claim}\label{cla:no-2-large-blue-cps}
	If there are two blue components $B_1,B_2$ with $|V(B_1)|,|V(B_2)| \geq (1/3 + 3 \gamma)n$, then  Lemma~\ref{lem:components} is true.
\end{claim}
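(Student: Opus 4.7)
\begin{claimproof}[Proof plan]
The plan is to set $C_1 := R$, $C_2 := B_1$, and $C_3 := B_2$ and verify the four conditions of Lemma~\ref{lem:components} for $C := R \cup B_1 \cup B_2$. Condition~\ref{itm:connectivity} is immediate since $R$ spans $G$ (cf.\ Remark~\ref{rem:connectivity-is-trivial}), and condition~\ref{itm:overlap}\ref{itm:true-overlap} holds because $V(R) \supseteq V(B_1) \cup V(B_2)$ and $V(B_1) \cap V(B_2) = \es$ yield
\[
\Big|\bigcup_{i \neq j} V(C_i) \cap V(C_j)\Big| \;\geq\; |V(B_1)| + |V(B_2)| \;\geq\; 2\bigl(\tfrac13 + 3\gamma\bigr) n.
\]

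For condition~\ref{itm:robust-matching}, I plan to argue by contradiction. Suppose a stable set $S$ is contracting in $C$. The first part of Claim~\ref{cla:observation-on-contracting-sets} gives that the at most two blue components meeting $S$ have total size at least $(\tfrac13 + 3\gamma) n$. However, by~\eqref{equ:C-cap-S-es} none of them is $B_1$ or $B_2$, and the remaining blue components have total size at most $n - |V(B_1)| - |V(B_2)| \leq (\tfrac13 - 6\gamma) n$, a contradiction.

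Condition~\ref{itm:odd-monochromatic-cycle} is vacuous unless $R$, $B_1$ and $B_2$ are all bipartite. The main step, and the main obstacle, is to rule out this all-bipartite case by showing that the colouring of $G$ must then be $(4\gamma)$-extremal as in Definition~\ref{def:extremal-colouring}\ref{conf:bipartite}, contradicting the hypothesis of Lemma~\ref{lem:components}. Let $R$ have bipartition $(X_1, X_2)$ and $B_j$ have bipartition $(Y_{j,1}, Y_{j,2})$ for $j \in \{1,2\}$. A preliminary observation is that each set $X_i \cap Y_{j,k}$ is stable in $G$: any edge inside $X_i$ is blue, and if both endpoints lie in $V(B_j)$ then the edge lies in $B_j$, but the bipartiteness of $B_j$ forbids such an edge inside $Y_{j,k}$. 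The minimum degree condition then gives $|X_i \cap Y_{j,k}| \leq (\tfrac13 - 8\gamma) n$.

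Using this stable-set bound together with the minimum degree condition, I would then show in sequence that (a)~$V(B_1) \cup V(B_2) = V(G)$, (b)~after possibly swapping labels, $V(B_j) = X_j$ for $j = 1, 2$, and (c)~$\bigl||X_1| - |X_2|\bigr| \leq 4\gamma n$. Step~(a) follows from a degree count on a hypothetical vertex outside $V(B_1) \cup V(B_2)$, which must lie in a blue component of size at most $(\tfrac13 - 6\gamma) n$; step~(c) follows routinely from (a), (b), and $|V(B_j)| \geq (\tfrac13 + 3\gamma) n$. The core difficulty is step~(b): ruling out that some $V(B_j)$ has vertices in both $X_1$ and $X_2$. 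For this I plan to bound the degree of a vertex $v \in V(B_j) \cap X_i \cap Y_{j,k}$ by $|X_{3-i}| + |X_i \cap Y_{j,3-k}| \leq |X_{3-i}| + (\tfrac13 - 8\gamma)n$, and combine this inequality with its counterparts from vertices in $V(B_j) \cap X_{3-i}$ and from $B_{3-j}$ to force a contradiction.
\end{claimproof}
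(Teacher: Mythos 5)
Your treatment of conditions \ref{itm:robust-matching}, \ref{itm:overlap} and \ref{itm:connectivity}, and of the case where one of $R,B_1,B_2$ has an odd cycle, matches the paper's argument. The gap is in the all-bipartite case: your step~(c), and in fact the whole strategy of forcing the colouring to be $(4\gamma)$-extremal there, does not work. From (a), (b) and $|X_1|,|X_2|\ge(\tfrac13+3\gamma)n$ with $|X_1|+|X_2|=n$ you only get $\bigl||X_1|-|X_2|\bigr|\le(\tfrac13-6\gamma)n$, which is nowhere near the $\le 4\gamma n$ required by Definition~\ref{def:extremal-colouring}\ref{conf:bipartite}. Moreover no further argument can close this, because the desired conclusion is false: take $|X_1|=0.4n$, $|X_2|=0.6n$, all $X_1$--$X_2$ edges red, and each $X_i$ split into two equal classes with a complete bipartite blue graph between them. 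This satisfies the minimum degree hypothesis, all of $R,B_1,B_2$ are bipartite with $|V(B_i)|=|X_i|\ge(\tfrac13+3\gamma)n$, yet the colouring is not $(4\gamma)$-extremal. So with your fixed choice $C_3=B_2\neq\es$, condition~\ref{itm:odd-monochromatic-cycle} genuinely fails and cannot be rescued.

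The paper's route is different at exactly this point: in the all-bipartite case it first shows $\{V(B_1),V(B_2)\}=\{X,Y\}$ (the bipartition classes of $R$), and then \emph{drops} the blue component on the smaller class, taking $C_1=R$, $C_2=B_1$ with $V(B_1)=X$ and $C_3=\es$, so that condition~\ref{itm:odd-monochromatic-cycle} holds vacuously and condition~\ref{itm:overlap}\ref{itm:true-overlap} holds since $|X|\ge n/2$. Extremality is only derived in the sub-case where $R\cup B_1$ additionally has a contracting set $S$ of size greater than $(\tfrac13-8\gamma)n$; a small contracting set leads to a size contradiction, and no contracting set means the two-component choice already works. You need to restructure the bipartite case along these lines rather than attempting to contradict the non-extremality hypothesis directly.
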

\begin{claimproof}
	Consider  blue components  $B_1,B_2$ with $|V(B_1)|,|V(B_2)| \geq (1/3 +3 \gamma)n$.
	First, suppose that one of $R,B_1,B_2$ contains an odd cycle.
	Let $H_2  = R \cup B_1  \cup B_2$.
	We will show that $H_2$ satisfies the conditions of Lemma~\ref{lem:components}.
	If $H_2$ has a contracting set, then by  
	Claim~\ref{cla:observation-on-contracting-sets} there are (up to) two more blue 
	components which together contain at least $(1/3 + 3 \gamma)n$ vertices.
	But this contradicts $|V(B_1)|,|V(B_2)| \geq (1/3 +3 \gamma)n.$
	Hence $H_2$ satisfies Lemma~\ref{lem:components}\ref{itm:robust-matching}.
	Moreover, $H_2$ also trivially satisfies~\ref{itm:overlap}\ref{itm:true-overlap}.
	Since one of $R,B_1,B_2$ contains an odd cycle, $H_2$ satisfies Lemma~\ref{lem:components}\ref{itm:odd-monochromatic-cycle}, and we are done.

	Hence we can assume that $R,B_1,B_2$ are each bipartite. Denote the bipartite 
	partition classes of $R$ by $X$ and $Y$, where $|X| \geq |Y|$. We claim that 
	either $V(B_1)=X$ and $V(B_2)=Y$ or vice versa. Observe that 
	$\delta(G_\b[X]) \geq (2/3 + 8 \gamma) n-|Y|\geq (1/6+8\gamma )n$. Since 
	$B_1$ and $B_2$ have each order at least $(1/3+3\gamma)n$ and $|Y|\leq n/2$, 
	there must be a vertex $x \in X$ which is also in $B_1$ or $B_2$, say in $B_1$.

	Let us show that $X$ is contained in~$V(B_1)$. Since 
	$\delta(G_\b[X]) \geq (1/6+8\gamma )n$, the component $B_1$ has an edge 
	in $X$. It follows, using $\delta(G_\b[X]) \geq  (2/3 + 8 \gamma) n-|Y|$ and the 
	bipartiteness of $B_1$, that $B_1$ has at least $(4/3+16\gamma )n-2|Y|$ 
	vertices in $X$.
	Take any vertex $x' \in X$. Since
	$$(4/3+16\gamma )n-2|Y| + \delta(G_\b[X]) \ge  (2 + 24 \gamma) n-3|Y|>|X|,$$
	$x'$ must have a blue neighbour in $V(B_1)$.
	This proves  $X \subset V(B_1)$.

	Since $X$ is contained in $V(B_1)$, it follows that $V(B_2)$ is contained in $Y$.
	In fact, we have $V(B_2)=Y$. To see this, recall that 
	$|V(B_2)| \geq (1/3 + 3 \gamma)n$. As $G$ has minimum degree 
	$(2/3+8\gamma)n$ and $Y$ contains no red edges, every vertex $y \in Y$ 
	must have a blue neighbour in $V(B_2)$, and therefore also belongs to $B_2$.
	Hence $Y \subset V(B_2)$.	So in fact, we have $V(B_1)=X$ and $V(B_2)=Y$.

	Set  $H_1 = R \cup B_{1}$.	If $H_1$ has no contracting sets, then it satisfies the 
	conditions of Lemma~\ref{lem:components} and we are done.
	So suppose that $S$ is a contracting set in $H_1$.
	Since vertices in $X$ have degree at least $(2/3+8\gamma)n$ in $H_1$, the 
	set $S$ must be contained in $Y=V(B_2)$.
	In the following, we distinguish two cases depending on the size of $S$.

	Let us begin with the case, where $|S| > (1/3-8\gamma)n$.
	Then every vertex of $G$ has a neighbour in~$S$. In particular, every vertex in $X$ 
	has a red neighbour in $S$, hence $X\subset N_{H_1}(X)$.
	So $|Y| + \gamma n \geq |S| + \gamma n \geq |N_{H_1}(S)| \geq |X|$, and 
	hence $|X|\geq |Y|\geq |X| -\gamma n$. It follows that the colouring is 
	$\gamma$-extremal as in~Definition~\ref{def:extremal-colouring}\ref{conf:bipartite}, 
	which contradicts our initial assumption.

	Now suppose that $|S|  \leq (1/3-8\gamma)n$. We show that this results a 
	contradiction. Suppose first that  $G[S]$ contains an edge $vw$. Such an edge 
	must be blue and in $B_2$. Using the bipartiteness  of $B_2$, we obtain that
	\begin{align*}
		|V(B_2)| & \overset{}{\geq} \deg_G(v,V(G)\sm X) + \deg_G(w,V(G)\sm X) \\
		         & \overset{}{\geq} (4/3 + 16 \gamma) n -  2|N_{H_1}(S)|                    \\
		         & \geq (4/3 + 14 \gamma) n -2|S|                                           \\
		         & \geq (2/3 + 30 \gamma)n,
	\end{align*}
	where we used that $S$ is contracting in the penultimate line.
	Now suppose that $G[S]$ contains no edge.
	It follows, for any $v \in S$, that
	\begin{align*}
		|V(B_2)| & \overset{}{\geq} |S|+ \deg_G(v,V(G)\sm N_{H_1}(S)) \\
		         & \geq    |S|+ (2/3 + 8 \gamma) n - |N_{H_1}(S)|     \\
		         & \geq (2/3 + 7 \gamma)n.
	\end{align*}
	In both cases, we have $|V(B_2)| \geq 2n/3$, which contradicts $|V(B_1)| \geq (1/3+3\gamma)n$.
\end{claimproof}

Denote the three blue components of largest order by $B_1,B_2,B_3$, where $|B_1| \geq |B_2| \geq |B_3|$.
Let $H_1=R\cup R_1$ and, for $i \in \{2,3\}$, let $H_i = R \cup B_1 \cup B_i = H_1 \cup B_i$.

\begin{claim}\label{cla:H0-no-contracting-set}
	Let $i \in \{2,3\}$. If $H_i$ has no contracting set, then  Lemma~\ref{lem:components} holds.
\end{claim}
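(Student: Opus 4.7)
My plan is to verify each of the four conditions of Lemma~\ref{lem:components} with the choice $C_1 = R$, $C_2 = B_1$, and $C_3 = B_i$ (or $C_3 = \es$ in a fallback case). Condition~\ref{itm:robust-matching} ($\gamma$-robust Tutte of $H_i$) is exactly the hypothesis, and condition~\ref{itm:connectivity} is immediate from $R$ being spanning.

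For condition~\ref{itm:overlap} with $C_3 = B_i$, since $R$ spans, the overlap $\bigcup_{j \neq k} V(C_j) \cap V(C_k)$ equals $V(B_1) \cup V(B_i)$. I aim to establish $|V(B_1)| + |V(B_i)| \geq (1/3 + \gamma)n$ via the following chain of reasoning. Any contracting set $S$ in $R \cup B_1$ must intersect $V(B_i)$: otherwise $S$ would have no $B_i$-edges (since $S \cap V(B_i) = \es$ means no $S$-vertex lies in the component $B_i$), so $N_{H_i}(S) = N_{R \cup B_1}(S)$ and $S$ would also be contracting in $H_i$, contradicting the hypothesis. Applying Claim~\ref{cla:observation-on-contracting-sets}\itmit{a} to $R \cup B_1$, the blue components (other than $B_1$) intersecting such an $S$ total at least $(1/3 + 3\gamma)n$; one of these is $B_i$, and the other, if present, has size at most $|V(B_1)|$ because $B_1$ is the largest blue component. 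These facts combine to force $|V(B_1)| + |V(B_i)| \geq (1/3 + 3\gamma)n$. As a fallback, should the overlap bound fail for some reason, I would switch to $C_3 = \es$ and verify~\ref{itm:overlap}\ref{itm:spanning-and-empty}, which requires that $R \cup B_1$ is $\gamma$-robust Tutte (which the same contracting-set argument yields in that regime) together with the non-bipartiteness of $R$ or $B_1$.

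For condition~\ref{itm:odd-monochromatic-cycle}, the only delicate subcase is when $R, B_1, B_i$ are all bipartite. If additionally $B_i$ is non-empty, then the choice $C_3 = B_i$ would violate~\ref{itm:odd-monochromatic-cycle}, and the alternative $C_3 = \es$ fails~\ref{itm:overlap}\ref{itm:spanning-and-empty} since no odd cycle is available. I plan to show that this configuration is impossible under the standing assumption of Lemma~\ref{lem:components}: using the minimum degree $\delta(\RdG) \geq (2/3 + 8\gamma)m$, the bipartiteness of $R$ with classes $X \cup Y$ forces each vertex to have most of its neighbours on the opposite side, and together with the bipartiteness of $B_1, B_i$ this pins $V(B_1)$ and $V(B_i)$ down to be essentially $X$ and $Y$ (up to $4\gamma m$ vertices). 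This yields the extremal colouring of Definition~\ref{def:extremal-colouring}\ref{conf:bipartite}, contradicting the non-extremal assumption of Lemma~\ref{lem:components}.

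The main obstacle is the bipartite subcase: the delicate structural accounting—pushing vertices into the appropriate bipartition class via the minimum degree condition, and controlling the sizes to verify the $(4\gamma)$-extremal bounds—mirrors the concluding bipartite analysis in the proof of Claim~\ref{cla:no-2-large-blue-cps}, but must be adapted to the presence of three (rather than two) large components whose vertex sets need to align with the bipartition.
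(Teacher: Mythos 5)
Your verification of conditions \ref{itm:robust-matching} and \ref{itm:connectivity}, and your derivation of \ref{itm:overlap}\ref{itm:true-overlap} from a contracting set of $R\cup B_1$ meeting $V(B_i)$, are sound. The genuine gap is in the all-bipartite subcase: the configuration you claim is impossible does occur for non-extremal colourings. Concretely, take $|X|=0.6n$, $|Y|=0.4n$, colour all $X$--$Y$ edges red (a spanning complete bipartite red component $R$), and place balanced complete bipartite blue graphs inside $X$ and inside $Y$ (components $B_1$ and $B_2$). The minimum degree is at least $0.7n$, the graph $H_2=R\cup B_1\cup B_2$ contains every edge of $G$ and hence has no contracting set, and $R,B_1,B_2$ are all bipartite with $B_2\neq\es$ --- yet the colouring is not $(4\gamma)$-extremal as in Definition~\ref{def:extremal-colouring}\ref{conf:bipartite}, because that requires $\big||X|-|Y|\big|\le 4\gamma m$ and here the classes differ by $0.2n$. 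Bipartiteness of the three components pins down at best that one of $X,Y$ is contained in $V(B_1)$; it gives no control on $\big||X|-|Y|\big|$ (in Claim~\ref{cla:no-2-large-blue-cps} that balance is extracted from a \emph{large contracting set} of $R\cup B_1$, which need not exist), and it does not even force $V(B_1),V(B_i)$ to align with $X$ and $Y$ --- the paper must separately treat the subcase $V(B_1)\subset Y$, where both bipartite blue components live inside $Y$ and a non-bipartite blue component covers $X$.

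The correct move in this subcase is not a contradiction but a change of selection. In the example above one takes $C_1=R$, $C_2=B_1$, $C_3=\es$ and satisfies \ref{itm:overlap}\ref{itm:true-overlap} via $|V(R)\cap V(B_1)|\ge|X|\ge n/2$; note that \ref{itm:overlap}\ref{itm:true-overlap} does not require three nonempty components, an option your plan overlooks when asserting that $C_3=\es$ forces reliance on \ref{itm:overlap}\ref{itm:spanning-and-empty}. In the subcase $V(B_1)\subset Y$ one instead takes $R\cup B_1\cup B_{5-i}$ with $B_{5-i}$ non-bipartite. Either way, the hypothesis that $H_i$ has no contracting set says nothing about the new selection, so condition \ref{itm:robust-matching} must be re-established for it; this is where the bulk of the paper's argument (via Claims~\ref{cla:observation-on-contracting-sets} and~\ref{cla:no-2-large-blue-cps}) lives, and it is absent from your proposal.
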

\begin{claimproof}
	Suppose that $H_i$ has no contracting set for some  $i \in \{2,3\}$. 
	
	First we deal with the case, when $|V(B_1) \cup V(B_2)| <	(1/3 + \gamma)n$.
	Here, we shall show that $H_1$ satisfies  the conditions of  
	Lemma~\ref{lem:components}. To begin, observe that all blue components 
	distinct from $B_1$ have order at most $(1/6 + \gamma/2)n$.
	It follows that $H_1$ has minimum degree at least  
	$\delta(G)-(1/6 + \gamma/2)n>(1/2 + 7\gamma)n$ and, therefore, contains 
	no contracting sets (see \eqref{equ:NS>1/2}).
	So~$H_1$ satisfies Lemma~\ref{lem:components}\ref{itm:robust-matching}.
	If $R$ contains an odd cycle, then $H_1$ also satisfies Lemma~\ref{lem:components}\ref{itm:overlap}\ref{itm:spanning-and-empty} and~\ref{itm:odd-monochromatic-cycle}, and we are done.
	Thus we may assume that $R$ is bipartite.
	Let $V(R) = X \cup Y$ be a bipartition of $R$, where $|X| \geq |Y|$.
	Note that $\delta_\b(G[X]) \geq (2/3 + 8\gamma )n - |Y| \geq (1/6 + 8\gamma )n$.
	Since $B_1$ is the only blue component of order (possibly) more than 
	$(1/6 + \gamma)n$, it follows that $X\subset V(B_1)$. Thus 
	$|V(B_1)| \geq |X| \geq n/2$ in contradiction to the assumption of 
	$|V(B_1) \cup V(B_2)| <(1/3 + \gamma)n$. 
	
	Hence we may assume that  $|V(B_1) \cup V(B_2)| \geq (1/3 + \gamma)n$.  
	This means that $H_2$ satisfies 
	Lemma~\ref{lem:components}\ref{itm:overlap}\ref{itm:true-overlap}. If 
	$|V(B_1) \cup V(B_3)| \geq (1/3 + \gamma)n$, then also $H_3$ satisfies 
	Lemma~\ref{lem:components}\ref{itm:overlap}\ref{itm:true-overlap}. Otherwise, 
	$|V(B_1) \cup V(B_3)| <	(1/3 + \gamma)n$ and all blue components 
	distinct from $B_1$ and $B_2$ have order at most $(1/6 + \gamma/2)n$.
	It follows that $H_2$ has minimum degree at least  
	$\delta(G)-(1/6 + \gamma/2)n>(1/2 + 7\gamma)n$ and, therefore, contains 
	no contracting sets (see \eqref{equ:NS>1/2}). So, we may assume $i=2$. 
	In either case, $H_i$ satisfies 
	Lemma~\ref{lem:components}\ref{itm:overlap}\ref{itm:true-overlap}.

	If one of $R$, $B_1$ and $B_i$ is not bipartite, then $H_i$ satisfies 
	Lemma~\ref{lem:components}\ref{itm:odd-monochromatic-cycle}, and
	Lemma~\ref{lem:components} holds.
	Hence assume  that $R$, $B_1$ and $B_i$ are each bipartite.
	As before, let $V(R) = X \cup Y$ be a bipartition of $R$ with $|X| \geq |Y|$.

	We need to distinguish between two sub-cases: either $B_1\cap X\neq\es$ 
	or $V(B_1) \subset Y$. Suppose first that $B_1$ has at least one vertex in $X$.
	We will show that  $X \subset V(B_1)$.
	To this end, note that $\delta_\b(G[X]) \geq \delta(G)-|Y|\ge (1/6 + 8\gamma )n$.
	Since $B_1$ is bipartite and has at least one edge in $X$, it has
	at least $2\delta_\b(G[X])\ge (1/3 + 16\gamma )n$ vertices in $X$.
	Using that $\delta(G) \geq (2/3+8\gamma)n$, we obtain that any vertex $x$ in $X$ 
	has a neighbour $x'$ in $V(B_1)\cap X$. Since $R$ is  bipartite, $xx'$ must be blue, 
	hence $x\in V(B_1)$ and $X \subset V(B_1)$.

	The situation, where $|X| < (1/2 + 2\gamma)n$ is easily resolved. Indeed, 
	if $|X| < (1/2 + \gamma)n$, then $|V(B_{i})|  \geq 2\delta(G_\b[Y]) \geq 2((2/3 + 8\gamma )n -|X|) \geq (1/3 + 12\gamma )n$, where we used the bipartiteness of $B_{i}$.
	Since $|V(B_1)| \geq |V(B_i)|$, we are done by Claim~\ref{cla:no-2-large-blue-cps}.
	
	Thus we can assume that $|X| \geq (1/2 + 2\gamma)n$. Now it is not hard to see 
	that $H_1 = R \cup B_1$ satisfies the conditions of Lemma~\ref{lem:components}.
	Note that~\ref{itm:overlap}\ref{itm:true-overlap} and~\ref{itm:odd-monochromatic-cycle} are trivially satisfied as $X\subset V(B_1)$.
	If $H_{1}$ has no contracting sets, then~\ref{itm:robust-matching} also holds, and we are done.
	So assume that $S$ is a contracting set in $H_{1}$. By 
	Claim~\ref{cla:observation-on-contracting-sets} there are blue components 
	$Q_1,\dots,Q_t$ with $t\leq 2$ such that
	 $|V(\bigcup_{j\in[t]} Q_j)| \geq (1/3 + 3 \gamma)n$.
	Note that $t = 2$, since otherwise $|V(B_1)| \geq |V(Q_1)| \geq (1/3 + 3\gamma)n$ 
	and we are done by Claim~\ref{cla:no-2-large-blue-cps} again.
	So, by Claim~\ref{cla:observation-on-contracting-sets}\ref{itm:obs-sets-t=2}, we 
	have $X\subset V(B_1) \subset N_{H_1}(S)$.
	Together with $|X| \geq (1/2 + 2\gamma)n$, this contradicts~\eqref{equ:NS>1/2}.
	This concludes the analysis of the case when $B_1$ intersects with $X$.

	Now suppose that $V(B_1) \subset Y$.
	We claim that no bipartite blue component has a vertex in $X$.
	Indeed, assume otherwise.
	Then such a component has order at least $(1/3 + 16\gamma )n$ due to being bipartite and $\delta(G_\b[X]) \geq (2/3 + 8\gamma )n - |Y| \geq (1/6 + 8\gamma )n$.
	Hence $|V(B_1)| \geq (1/3 + 16\gamma )n$ by the maximality of $B_1$, and we are done again by Claim~\ref{cla:no-2-large-blue-cps}.
	So blue components with vertices in $X$ are not bipartite.
	In particular, $V(B_i) \subset Y \sm V(B_1)$.

	Next, we show that $B_{5-i}$ has a vertex in $X$.
	Indeed, assume otherwise.
	Owing to $\delta(G_\b[X]) \geq (1/6 + 8\gamma )n$ and $B_3$ being the third largest component, it follows that $|V(B_1)| \geq |V(B_2)| \geq |V(B_3)| \geq (1/6 + 8\gamma )n$.
	Hence $|Y| \geq |V(B_1)|+|V(B_2)|+|V(B_3)| > n/2$, which contradicts $|X| \geq |Y|$.
	So $B_{5-i}$ intersects with $X$, which implies in particular that $B_{5-i}$ is not bipartite.

	To finish, we show that $H_{5-i}=R \cup B_1 \cup B_{5-i}$ satisfies the conditions of Lemma~\ref{lem:components}.
	Note that $|V(B_{5-i})|  \geq (1/6 + 8\gamma )n$ as $\delta(G_\b[X]) \geq (1/6 + 8\gamma )n$.
	Hence $|V(B_1) \cup V(B_{5-i})| \geq (1/3 + 16\gamma )n$, which gives~\ref{itm:overlap}\ref{itm:true-overlap}.
	Moreover,~\ref{itm:odd-monochromatic-cycle} holds as  $B_{5-i}$ is not bipartite.
	If $H_{5-i}$ has no contracting sets, then we also have the 
	condition~\ref{itm:robust-matching}.
	Hence assume that $S$ is a contracting set in $H_{5-i}$.

	By Claim~\ref{cla:observation-on-contracting-sets} applied to $H_{5-i}$, there are blue components $Q_1,\dots,Q_t$ with $t\leq 2$ such that $|V(\bigcup_{j\in[t]} Q_j)|  \geq (1/3 + 3 \gamma)n$.
	As before, we observe that $t = 2$, since otherwise $|V(B_1)| \geq |V(Q_1)| \geq (1/3 + 3\gamma)n$, and we are again done by Claim~\ref{cla:no-2-large-blue-cps}.
	As $B_i$ is bipartite, it follows that $B_i \neq Q_1,Q_2$ by Claim~\ref{cla:observation-on-contracting-sets}\ref{itm:obs-contracting-sets-D-bipartite}.
	Moreover, by Claim~\ref{cla:observation-on-contracting-sets}\ref{itm:obs-sets-t=2}, we have $V(B_1) \cup V(B_2) \cup V(B_3)= V(B_1) \cup V(B_{5-i}) \cup V(B_i) \subset N_{H_5-i}(S)$.
	On the other hand, $|V(B_3)| \geq  (1/6 +  \gamma)n$ since one of $Q_1$ and $Q_2$ has size at least $(1/6 +  \gamma)n$ and $B_3$ is the  blue component of third largest order.
	Hence, $|N_{H_{5-i}}(S)| \geq |V(B_1 \cup B_2 \cup B_3)| \geq (1/2 +  3\gamma)n$ in contradiction to~\eqref{equ:NS>1/2}.
\end{claimproof}

By Claim~\ref{cla:H0-no-contracting-set}, we can assume that $H_2$ and $H_3$ have each a contracting set, which we denote by $S_2$ and $S_3$, respectively.

\begin{claim}\label{cla:Si>1/2}
	We have $|S_2|, |S_3| \geq (1/2 - \gamma)n$.
\end{claim}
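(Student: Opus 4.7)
My plan is to exploit the structure imposed by Claim~\ref{cla:observation-on-contracting-sets} together with the minimum degree condition. Fix $i \in \{2, 3\}$ and consider the contracting set $S_i$ in $H_i = R \cup B_1 \cup B_i$. First, I would apply Claim~\ref{cla:observation-on-contracting-sets}(a) to obtain blue components $Q_1, \ldots, Q_t$ (with $t \leq 2$, each distinct from $B_1$ and $B_i$) whose union meets $S_i$ and has total size at least $(1/3+3\gamma)n$. By Claim~\ref{cla:no-2-large-blue-cps} we may assume that no two blue components have order at least $(1/3+3\gamma)n$; since $|V(B_1)| \geq |V(B_j)|$ for all $j$, this forces $|V(B_j)| < (1/3+3\gamma)n$ for each $j \geq 2$, and in particular each $|V(Q_k)| < (1/3+3\gamma)n$. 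This rules out $t=1$ (which would require $|V(Q_1)| \geq (1/3+3\gamma)n$), so $t = 2$. Claim~\ref{cla:observation-on-contracting-sets}(c) then gives $V(G) \setminus (V(Q_1) \cup V(Q_2)) \subseteq N_{H_i}(S_i)$, hence $V(B_1) \cup V(B_i) \subseteq N_{H_i}(S_i)$ and $|N_{H_i}(S_i)| \geq n - |V(Q_1)| - |V(Q_2)|$. Combined with the contracting inequality $|N_{H_i}(S_i)| < |S_i| + \gamma n$, this yields $|S_i| > n - |V(Q_1)| - |V(Q_2)| - \gamma n$, so the claim reduces to showing $|V(Q_1)| + |V(Q_2)| \leq n/2 + O(\gamma n)$.

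To bound $|V(Q_1)| + |V(Q_2)|$ from above I would combine several degree considerations. Picking any $v \in S_i \cap V(Q_k)$, the fact that $v$'s blue neighbours all lie inside its own blue component gives $\deg_R(v) \geq (2/3+8\gamma)n - |V(Q_k)| + 1$, and these red neighbours all lie in $N_{H_i}(S_i)$ (since $S_i$ is $R$-stable, $R\subseteq H_i$, and $W_i$-vertices have no $H_i$-edges to $S_i$). Taking $v$ in the smaller of $Q_1,Q_2$ and comparing with the upper bound $|N_{H_i}(S_i)| < (1/2+\gamma/2)n$ from~\eqref{equ:NS>1/2} forces $\min_k|V(Q_k)| > (1/6 + 15\gamma/2)n$. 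Together with the facts that $B_1$ is the largest blue component, so $|V(B_1)| \geq \max_k|V(Q_k)|$, and that $|V(B_1)| + |V(B_i)| + |V(Q_1)| + |V(Q_2)| \leq n$, this constrains the collective size of $V(Q_1)\cup V(Q_2)$; an analogous argument applied inside $H_{5-i}$ (whose contracting set also exists by Claim~\ref{cla:H0-no-contracting-set}) yields the corresponding bound for the other index.

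The main obstacle will be the final combinatorial accounting: the straightforward single-vertex degree argument alone yields only $|S_i| > (1/3 + 15\gamma)n$. I expect that closing the gap to the sharp $(1/2-\gamma)n$ will require using that both $H_2$ \emph{and} $H_3$ simultaneously possess contracting sets, since this forces the three largest blue components to all lie in a narrow size window $((1/6+O(\gamma))n,\,(1/3+O(\gamma))n)$, together with the assumption that the colouring is not $(4\gamma)$-extremal. Concretely, if $|V(Q_1)|+|V(Q_2)| > n/2$ held for some $i$, then $|V(B_1)|+|V(B_i)|$ would be small; combined with the fact that $|V(B_1)|$ is maximal among blue components, this should push the colouring of $\RdG$ towards the bipartite extremal configuration of Definition~\ref{def:extremal-colouring}\ref{conf:bipartite}, contradicting the standing non-extremality hypothesis of Lemma~\ref{lem:components}.
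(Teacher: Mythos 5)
Your first half follows the paper's proof exactly: ruling out $t=1$ via Claim~\ref{cla:no-2-large-blue-cps}, concluding $t=2$, and using Claim~\ref{cla:observation-on-contracting-sets}\ref{itm:obs-sets-t=2} to get $V(B_1)\cup V(B_i)\subseteq N_{H_i}(S_i)$ and $n-|N_{H_i}(S_i)|\le |V(Q_1)|+|V(Q_2)|$. But you then leave a genuine gap at the decisive step: you reduce the claim to bounding $|V(Q_1)|+|V(Q_2)|$, explicitly concede that your degree argument only yields $|S_i|>(1/3+15\gamma)n$, and speculate that closing the gap needs the simultaneous existence of both contracting sets plus the non-extremality hypothesis. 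None of that is needed, and you never actually carry it out. The missing observation is a short chain you already have all the ingredients for: since $Q_1,Q_2$ are blue components distinct from $B_1$ and $B_i$ (by~\eqref{equ:C-cap-S-es}) and $B_1,B_2,B_3$ are the three largest blue components, $|V(Q_1)|+|V(Q_2)|\le |V(B_1)|+|V(B_i)|$; and since $V(B_1)\cup V(B_i)\subseteq N_{H_i}(S_i)$ with $B_1,B_i$ disjoint, $|V(B_1)|+|V(B_i)|\le |N_{H_i}(S_i)|<|S_i|+\gamma n$. Chaining this with $n-|S_i|-\gamma n\le n-|N_{H_i}(S_i)|\le |V(Q_1)|+|V(Q_2)|$ gives $n-|S_i|-\gamma n\le |S_i|+\gamma n$, i.e.\ $|S_i|\ge(1/2-\gamma)n$ directly. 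In other words, the upper bound on $|V(Q_1)|+|V(Q_2)|$ is self-referential (it is at most $|S_i|+\gamma n$), so no absolute bound of the form $n/2+O(\gamma n)$ and no appeal to extremality is required.

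A secondary issue: even your intended fallback route is shakier than you suggest. The single-vertex degree computation gives a \emph{lower} bound $\min_k|V(Q_k)|>(1/6+15\gamma/2)n$, which points in the wrong direction for upper-bounding the sum, and the disjointness inequality $|V(B_1)|+|V(B_i)|+|V(Q_1)|+|V(Q_2)|\le n$ only becomes useful once you also invoke $|V(Q_1)|+|V(Q_2)|\le |V(B_1)|+|V(B_i)|$ --- at which point you are back to the direct argument above and the detour through extremal configurations is superfluous.
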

\begin{claimproof}
	Fix $i \in \{2,3\}$.
	By Claim~\ref{cla:observation-on-contracting-sets} applied to $H_i$, there are blue components $Q_1,\dots,Q_t$ for $t\leq 2$, which together cover at least $(1/3 + 3 \gamma)n$ vertices.
	If $t=1$, then $Q_1$ has order at least $(1/3 + 3 \gamma)n$.
	In this case, $B_1$ (as the largest blue component) has also order at least $(1/3 + 3 \gamma)n$, and we are done by Claim~\ref{cla:no-2-large-blue-cps}.
	Hence we have $t=2$.
	Now Claim~\ref{cla:observation-on-contracting-sets}\ref{itm:obs-sets-t=2} yields $V(G)\sm N_{H_i}(S_i) \subset V(\bigcup_{j\in [t]} Q_j)$ and, therefore, we must have $V(B_1 \cup  B_i) \subset N_{H_i}(S_i)$.
	Since $B_1, B_2, B_3$ are three blue components of largest order and $|V(B_1)| \geq |V(B_2)| \geq |V(B_3)|$, we have $|V(Q_1)| + |V(Q_2)| \leq |V(B_1)| + |V(B_i)|$.
	Together, this gives
	\begin{align*}
		n-|S_i| -\gamma n & \leq |V(G) \sm  N_{H_i}(S_i)|
		\leq|V(Q_1)| + |V(Q_2)|
		\leq |V(B_1)| + |V(B_i)|
		\\&\leq |N_{H_i}(S_i)|
		\leq |S_i| +\gamma n.
	\end{align*}
	Hence  $|S_i| \geq (1/2-\gamma)n$.
\end{claimproof}

\begin{claim}\label{cla:size-B2-intersection-S3}
	We have $|V(B_2)\cap S_3| ,~  |V(B_3)\cap S_2| \geq (1/4- 4\gamma)n$.
\end{claim}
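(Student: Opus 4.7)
Fix $i\in\{2,3\}$ and set $j:=5-i$, so the target bound reads $|V(B_j)\cap S_i|\ge(1/4-4\gamma)n$. The argument in the proof of Claim~\ref{cla:Si>1/2} establishes that there are exactly two blue components $Q_1,Q_2$ meeting $S_i$, both distinct from $B_1$ and $B_i$, that $S_i\subseteq V(Q_1)\cup V(Q_2)$, and that $|S_i|\ge(1/2-\gamma)n$. The plan is first to show that $B_j\in\{Q_1,Q_2\}$, and then to use the size of the other component in $\{Q_1,Q_2\}$ to bound $|V(B_j)\cap S_i|$ from below.

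For the first step, I would suppose for contradiction that $B_j\notin\{Q_1,Q_2\}$, so that $B_1,B_i,B_j,Q_1,Q_2$ are five pairwise vertex-disjoint blue components. Since $Q_1,Q_2$ are not among the three largest blue components $B_1,B_2,B_3$, each $Q_\ell$ satisfies $|V(Q_\ell)|\le|V(B_3)|\le|V(B_j)|$. Combined with $|V(Q_1)|+|V(Q_2)|\ge|S_i|\ge(1/2-\gamma)n$, this forces $|V(B_3)|\ge(1/4-\gamma/2)n$, and since $|V(B_1)|,|V(B_i)|,|V(B_j)|\ge|V(B_3)|$, summing the sizes of the five pairwise disjoint components gives a total of at least $3(1/4-\gamma/2)n+(1/2-\gamma)n=(5/4-5\gamma/2)n$, exceeding $n$ for small $\gamma$ and yielding a contradiction.

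Having established $B_j\in\{Q_1,Q_2\}$, I relabel so that $Q_1=B_j$. Then $Q_2$ is distinct from each of $B_1,B_2,B_3$, so $|V(Q_2)|\le|V(B_3)|$. To bound $|V(B_3)|$ from above, I use that $B_1,B_i,Q_1,Q_2$ are four pairwise disjoint blue components, so their sizes sum to at most $n$; combined with $|V(Q_1)|+|V(Q_2)|\ge(1/2-\gamma)n$ this yields $|V(B_1)|+|V(B_i)|\le(1/2+\gamma)n$, and since both $|V(B_1)|$ and $|V(B_i)|$ are at least $|V(B_3)|$, we obtain $|V(B_3)|\le(1/4+\gamma/2)n$. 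Therefore $|V(Q_2)|\le(1/4+\gamma/2)n$, and
\[
|V(B_j)\cap S_i|=|V(Q_1)\cap S_i|\ge|S_i|-|V(Q_2)|\ge(1/2-\gamma)n-(1/4+\gamma/2)n=(1/4-3\gamma/2)n\ge(1/4-4\gamma)n.
\]

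The main obstacle is the first step: we need to show that the contracting set $S_i$ actually reaches $B_j$, rather than being supported on two smaller blue components that are themselves disjoint from $\{B_1,B_2,B_3\}$. Once this is secured, the second step is a short counting argument exploiting the disjointness of blue components and the ordering $|V(B_1)|\ge|V(B_2)|\ge|V(B_3)|$.
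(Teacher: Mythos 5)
Your proof is correct. It uses the same ingredients as the paper's argument (the bound $|S_i|\ge(1/2-\gamma)n$ from Claim~\ref{cla:Si>1/2}, the fact that $S_i$ meets exactly two blue components $Q_1,Q_2$ both distinct from $B_1$ and $B_i$, and counting over pairwise disjoint blue components), but organises them differently. The paper never explicitly identifies $B_j$ as one of $Q_1,Q_2$: it first deduces $|V(B_3)|\ge(1/4-\gamma)n$ (since one of $Q_1,Q_2$ has order at least $|S_i|/2$ and is not $B_1$ or $B_i$, so $B_3$, being third largest, is at least that big), and then observes that $V(G)\setminus S_2$ is almost entirely occupied by the disjoint sets $V(B_1)\cup V(B_2)$, forcing $B_3$ to lie inside $S_2$ up to $3\gamma n$ vertices. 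You instead first prove $B_j\in\{Q_1,Q_2\}$ by the five-disjoint-components contradiction, and then subtract an upper bound on $|V(Q_2)|\le|V(B_3)|\le(1/4+\gamma/2)n$ from $|S_i|$. Your route costs one extra identification step but yields a marginally better constant ($1/4-3\gamma/2$ versus the paper's $1/4-4\gamma$); both are equally elementary and neither has a real advantage. All the steps you use from earlier claims (that $t=2$ in the proof of Claim~\ref{cla:Si>1/2}, and that $Q_1,Q_2\neq B_1,B_i$ via~\eqref{equ:C-cap-S-es}) are indeed available at this point of the argument, so there is no gap.
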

\begin{claimproof}
	Since $|S_2| \geq (1/2 - \gamma)n$ by Claim~\ref{cla:Si>1/2}, the vertices in $S_2$ have at least $(1/6+7\gamma)n$ blue neighbours in $S_2$.
	It follows from Claim~\ref{cla:observation-on-contracting-sets} that $S_2$ is contained in at most two blue components, one of which has order at least $(1/4-\gamma)n$.
	Thus $|V(B_1)| \geq |V(B_2)| \geq |V(B_3)| \geq (1/4-\gamma)n$ by the maximality of $B_1,B_2,B_3$.
	Recall that $H_2 = R \cup B_1 \cup B_2$ and, therefore, $S_2 \cap B_1 =\es$ and $S_2 \cap B_2 =\es$ by~\eqref{equ:C-cap-S-es}.
	As $|S_2| \geq (1/2 - \gamma)n$, $ |V(B_1)\cup V(B_2)| \geq (1/2-2\gamma)n$ and $V(B_1) \cup V(B_2) \subset V(G) \sm S_2$, the component $B_3$ has at most $3\gamma n$ vertices outside of $S_2$.
	Consequently, $|V(B_3)\cap S_2| \geq (1/4-4\gamma)n$.

	Analogously, as $|S_3| \geq (1/2 - \gamma)n$ by Claim~\ref{cla:Si>1/2} and $V(B_1) \cup V(B_3) \subset V(G) \sm S_3$, the component~$B_2$ has at most $3\gamma n$ vertices outside of $S_3$.
	As a result, we have $|V(B_2)\cap S_3| \geq (1/4-4\gamma)n$.
\end{claimproof}

Note that $V(B_1)$ has vertices  in  neither $S_2$ nor $S_3$ by \eqref{equ:C-cap-S-es} and $|V(B_1)|\ge |V(B_2)|\ge |V(B_2)\cap S_3| \geq (1/4-4\gamma)n$.
So, by Claim~\ref{cla:Si>1/2}, there is a vertex $v \in S_2 \cap S_3$.
By~\eqref{equ:C-cap-S-es} and the definition of $H_2$ and $H_3$, the vertex $v$ is not in $V(B_2 \cup B_3)$.
At the same time, $v$ has no red neighbours in $S_2 \cup S_3$ since $S_2, S_3$ are stable in $H_2,H_3$ respectively.
Therefore,~$v$ has no neighbours in $S_2 \cap V(B_3)$ and no neighbours $S_3 \cap V(B_2)$.
Since $B_2$ and $B_3$ are distinct blue components, these sets are disjoint.
Moreover, by Claim~\ref{cla:size-B2-intersection-S3}, $B_2$ and $B_3$  have each size  at least $(1/4-4\gamma)n$.
Hence $v$ has degree at most $n-2(1/4-4\gamma)n=(1/2+8\gamma)n$ in contradiction with $\delta(G) \geq (2/3 + 8\gamma)n$.
This concludes the analysis of the case, where one of $C_1$ and $C_2$ is spanning.

\subsection{\texorpdfstring{Case 2: $C_1$ and $C_2$ have distinct colours}{Case 2: C1 and C2 have distinct colours}}
\label{cas:C1-C2-distinct-colours}
Suppose that $R:=C_1$ is red and $B:=C_2$ is blue.
If $R$ or $B$ is spanning, we proceed as in Subsection~\ref{cas:C1-spanning}.
We will show that $R$, $B$ and another red component together satisfy the conditions of Lemma~\ref{lem:components}.

Since neither $R$ nor $B$ is spanning, there are vertices $v_R \in  V(R) \sm V(B)$ and $v_B \in  V(B) \sm V(R)$.
Note that
\begin{equation}\label{equ:Nv-Nw-geq-1/3}
	|N_G(v_R) \cap N_G(v_B)| \geq (1/3 + 16\gamma)n.
\end{equation}
Moreover,  any common neighbour of $v_R$ and $v_B$ must lie in $V(R)\cap V(B)$ (if for instance, $v_R$ has a neighbour $v\in V(B)\setminus V(R)$ then the edge $v_Rv$ cannot be red and it cannot be blue, a contradiction). It follows that
\begin{equation}\label{eq:intersecV1V2}
	|V(R)\cap V(B)| \geq (1/3 + 16\gamma)n.
\end{equation}
Similarly, observe that, for $i \in \{1,2\}$,
\begin{equation}\label{equ:V(C_i)-sm-V(C_j)<1/3-gamma}
	|V(R) \sm V(B)| \leq |V(G) \sm N_G(v_{B})| < (1/3 - 8\gamma)n
\end{equation}
and hence $|V(B)| \geq (2/3+8\gamma)n$.
By symmetry, the same is true for colours flipped.

Let $R'$ be the red component of largest order in $B - V(R)$.
Note that any vertex $v'_B$ in $B - V(R \cup R')$ is incident to fewer than $(1/6 - 4\gamma )n$ red edges.
Indeed, otherwise $|V(R')| \ge (1/6 - 4\gamma )n$ (by maximality of $R'$), and so $v'_B$ sends a red edge to $R'$  by~\eqref{equ:V(C_i)-sm-V(C_j)<1/3-gamma} (with colours flipped).

We claim that $H := R \cup B \cup R'$ satisfies items~\ref{itm:robust-matching}--\ref{itm:connectivity} of Lemma~\ref{lem:components}.
As by Claim~\ref{cla:cover-by-C1-C2}, $R \cup B$ spans $G$, part~\ref{itm:connectivity} is trivial and  part~\ref{itm:overlap} holds because of~\eqref{eq:intersecV1V2}.
By~\eqref{equ:Nv-Nw-geq-1/3}, and because of our assumption on the minimum degree of $G$, we know that $G[N(v_R) \cap N(v_B)]$ contains an edge. If this edge is red,
$v_R$ is in a red triangle, and if it is blue, $v_B$ is in a blue triangle.
Hence part~\ref{itm:odd-monochromatic-cycle} follows.

It remains to show that  part~\ref{itm:robust-matching} is satisfied.
To this end, we assume there is a contracting set~$S$ in $H$ and show that this results in a contradiction.
Observe that
any vertex $v$ in $V(R) \cap V(B)$ or in $V(R')$ has degree
\begin{equation*}
	\deg_H(v) \geq (2/3 + 8 \gamma)n.
\end{equation*}
Also, recall that by definition of $R'$, any vertex $v'_B \in V(B) \sm V(R \cup  R')$ has $\deg_\r(v'_B) < (1/6 -4\gamma)n$ and so
\begin{equation*}\label{equ:deg-v-geq-2/3}
	\deg_H(v'_B) > (2/3 + 8 \gamma)n - (1/6 -4\gamma)n = (1/2 + 12 \gamma)n.
\end{equation*}
On the other hand, since $S$ is contracting, we have $|N_H(S)| \leq (1/2 + \gamma /2)n$ by~\eqref{equ:NS>1/2}.
Since $|N_H(S)| \geq \deg_H(v)$ for any $v \in S$, this means that $S \subset V(R) \sm V(B)$.
By~\eqref{equ:V(C_i)-sm-V(C_j)<1/3-gamma} this gives $|S| \leq (1/3 - 8 \gamma)n$.
However, any vertex $v'_R \in V(R) \sm V(B)$ has
\begin{equation*}
	\deg_H(v'_R) \geq (2/3 + 8 \gamma)n - |V(R) \sm V(B)| \overset{\eqref{equ:V(C_i)-sm-V(C_j)<1/3-gamma}}{\geq} (1/3 + 16 \gamma)n.
\end{equation*}
Thus $|N_H(S)| \geq |S| + 24 \gamma n$. So, contrary to our assumption, $S$ is not contracting.

\subsection{\texorpdfstring{Case 3: $C_1$ and $C_2$ have the same colour}{Case 3: C1 and C2 have the same colour}}
Suppose that $R_1:=C_1$ and $R_2:=C_2$ are both red with
\begin{equation}\label{equ:VR1-geq-VR2}
	|V(R_1)|\geq n/2\ge  |V(R_2)|.
\end{equation}
Let $B_1, \ldots, B_t$ be the blue components that intersect with $R_2$.
If $t = 1$, or if one of the $B_i$ contains~$R_1$, we proceed as in Subsection~\ref{cas:C1-C2-distinct-colours}.
Hence we can assume that $t \geq 2$, and $R_1\not\subseteq B_i$, for $1\le i\le t$.

We begin by showing that $t=2$.
To this end, let $v_i \in V(B_i) \cap V(R_2)$, for $i \in [t]$. If $t\ge3$, then (using~\eqref{equ:VR1-geq-VR2} for the third inequality),
\begin{align*}
	|V(R_1)|  \ge \sum_{i \in [3]} \deg(v_i,V(R_1)) & \geq 3\big((2/3 + 8 \gamma)n - |V(R_2)|\big) \\
	                                                & >2n-(n+|V(R_2)|)                             \\ & = n- |V(R_2)|,
\end{align*}
a contradiction.
So indeed $t=2$.

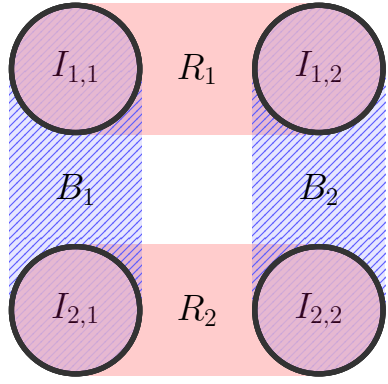
\begin{figure}
	\centering
	\begin{tikzpicture}[scale=0.8, transform shape]

		\node[white set] (a) at (0,0) {\huge $I_{2,1}$};
		\node[set,fill opacity=0.2, fill=red] (a) at (0,0) {};
		\node[preaction={pattern=north east lines, opacity=0.4,
					pattern color=blue!60}, edge, set,fill opacity=0.1, fill=blue] (a) at (0,0) {};

		\node[white set] (b) at (4,0) {\huge $I_{2,2}$};
		\node[set,fill opacity=0.2, fill=red] (b) at (4,0) {};
		\node[preaction={pattern=north east lines, opacity=0.4,
					pattern color=blue!60}, edge, set,fill opacity=0.1, fill=blue] (b) at (4,0) {};

		\node[white set] (c) at (0,4) {\huge $I_{1,1}$};
		\node[set,fill opacity=0.2, fill=red] (c) at (0,4) {};
		\node[preaction={pattern=north east lines, opacity=0.4,
					pattern color=blue!60}, edge, set,fill opacity=0.1, fill=blue] (c) at (0,4) {};

		\node[white set] (d) at (4,4) {\huge $I_{1,2}$};
		\node[set,fill opacity=0.2, fill=red] (d) at (4,4) {};
		\node[preaction={pattern=north east lines, opacity=0.4,
					pattern color=blue!60}, edge, set,fill opacity=0.1, fill=blue] (d) at (4,4) {};


		\node[] (x) at (2,0) {\huge $R_2$};
		\node[] (x) at (2,4) {\huge $R_1$};
		\node[] (x) at (0,2) {\huge $B_1$};
		\node[] (x) at (4,2) {\huge $B_2$};

		\begin{scope}[on background layer]
			\fill[red edge, opacity=0.2] (a.south) -- (b.south) -- (b.north) -- (a.north) -- (a.south) --  cycle;
			\fill[red edge, opacity=0.2] (c.south) -- (d.south) -- (d.north) -- (c.north) -- (c.south) --  cycle;
			\fill[blue edge, opacity=0.1] (a.west) -- (c.west) -- (c.east) -- (a.east) -- (a.west) --  cycle;
			\fill[blue edge,opacity=0.1] (b.west) -- (d.west) -- (d.east) -- (b.east) -- (b.west) --  cycle;
		\end{scope}

	\end{tikzpicture}
	\caption{The partition of $G$ in Case 3.}
	\label{fig:case-3}
\end{figure}

Now let us partition $V(G)$ into non-empty sets
$$\I{i}{j} := V(R_i) \cap V(B_j)$$ for $i,j \in [2]$ as illustrated in Figure~\ref{fig:case-3}.
Since vertices in $ I_{i,j}$ have no neighbours in $I_{3-i,3-j}$ and as $\delta(G) \geq (2/3 + 8\gamma)n$, we obtain
\begin{equation}\label{equ:I<1/3}
	|\I{i}{j}| < (1/3 - 8\gamma)n \text{ for $i,j \in [2]$}.
\end{equation}
In particular,
\begin{equation}\label{equ:Ri,Bi>1/3}
	\text{each of $R_1,R_2,B_1,B_2$ has at least $(1/3 + 16\gamma)n$ vertices,}
\end{equation}
and moreover,
\begin{align}
	\delta\big(G[I_{i,j}] \big) & \geq \delta(G) - |I_{{3-i},j} \cup I_{i,{3-j}}| \nonumber \\ & \overset{\eqref{equ:I<1/3}}{\geq}(2/3 +8\gamma)n -2(1/3-8\gamma)n \geq   24\gamma n.\label{equ:dense-Iij-min-deg}
\end{align}
We will show that  three components among $R_1,R_2,B_1,B_2$ can be selected so that they satisfy the conditions of Lemma~\ref{lem:components} unless the colouring is $(4\gamma)$-extremal as in Definition~\ref{def:extremal-colouring}\ref{conf:four-cycle} in contradiction to our assumption.
Note that for any such choice, Lemma~\ref{lem:components}\ref{itm:connectivity} holds since $R_1 \cup R_2$, $B_1 \cup B_2$ span $G$ and part~\ref{itm:overlap}\ref{itm:true-overlap} follows  by~\eqref{equ:Ri,Bi>1/3}.
The next claim yields part~\ref{itm:odd-monochromatic-cycle}.
\begin{claim}
	If one colour has a bipartite component, then the other colour has no bipartite component.
\end{claim}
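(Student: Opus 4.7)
The plan is to suppose, for contradiction, that both $R_1$ and $B_1$ are bipartite; the remaining three subcases ($R_1$ and $B_2$, $R_2$ and $B_1$, or $R_2$ and $B_2$ both bipartite) follow by symmetric relabelling of colours and of the indices $1,2$. Let $(X_R,Y_R)$ and $(X_B,Y_B)$ be the bipartitions of $R_1$ and $B_1$, respectively. I would then refine the four-cell partition of $V(G)$: the set $I_{1,1}$ splits into four quadrants $Q_{rs}:=r\cap s\cap I_{1,1}$ with $r\in\{X_R,Y_R\}$ and $s\in\{X_B,Y_B\}$, while $I_{1,2}$ splits as $I_{1,2}^X\cup I_{1,2}^Y$ along the $R_1$-bipartition and $I_{2,1}$ as $I_{2,1}^X\cup I_{2,1}^Y$ along the $B_1$-bipartition.

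The central observation would be that for any vertex $v\in Q_{rs}$, the set $(Q_{rs}\setminus\{v\})\cup I_{1,2}^r\cup I_{2,1}^s\cup I_{2,2}$ contains no neighbour of $v$: edges inside $Q_{rs}$ would violate one of the two bipartitions; edges from $Q_{rs}$ to $I_{1,2}^r$ must be red (since $I_{1,1}\subseteq V(B_1)$ and $I_{1,2}\subseteq V(B_2)$) but would then join $r$ to $r$ inside $R_1$; edges to $I_{2,1}^s$ are excluded by the symmetric argument in blue; and no edges go between $I_{1,1}$ and $I_{2,2}$. Combined with $\delta(G)\ge(2/3+8\gamma)n$, this yields the master inequality
\begin{equation*}
|Q_{rs}|+|I_{1,2}^r|+|I_{2,1}^s|+|I_{2,2}|\leq(1/3-8\gamma)n \tag{$\star$}
\end{equation*}
for every non-empty quadrant $Q_{rs}$.

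I would then case-split on which quadrants are non-empty. If all four are, summing the four copies of $(\star)$ and using $\sum_{i,j}|I_{i,j}|=n$ gives $|I_{1,2}|+|I_{2,1}|+3|I_{2,2}|\leq(1/3-32\gamma)n$, contradicting $|R_2|+|B_2|=|I_{1,2}|+|I_{2,1}|+2|I_{2,2}|\geq(2/3+32\gamma)n$ from~\eqref{equ:Ri,Bi>1/3}. The cases of three non-empty or two diagonally opposite non-empty quadrants reduce to similar numerical contradictions after summation. In a two-adjacent case---say $Q_{XX},Q_{XY}\neq\emptyset$ and $Q_{YX}=Q_{YY}=\emptyset$, forcing $I_{1,1}\subseteq X_R$---summation produces $2|I_{1,2}^X|+|I_{2,2}|\leq|I_{1,2}|-(1/3+16\gamma)n$, and then invoking $|I_{1,2}|<(1/3-8\gamma)n$ from~\eqref{equ:I<1/3} makes the right-hand side negative. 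Finally, if only one quadrant $Q_{rs}$ is non-empty, then $I_{1,1}\subseteq r\cap s$, so every potential red or blue edge inside $I_{1,1}$ would violate a bipartition; hence $G[I_{1,1}]$ is edgeless, contradicting $\delta(G[I_{1,1}])\geq 24\gamma n>0$ from~\eqref{equ:dense-Iij-min-deg}.

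The main obstacle is the bookkeeping of the case analysis: each individual contradiction is a short calculation, but one must verify that every pattern of empty and non-empty quadrants is covered and that in the two-adjacent case the extra ingredient~\eqref{equ:I<1/3}---rather than pure summation---is what closes the argument. Once the contradiction is obtained in every case, the four symmetric configurations are identical up to relabelling, giving the claim.
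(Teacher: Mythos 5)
Your argument is correct, but it takes a genuinely different route from the paper's. The paper first proves the intermediate statement~\eqref{bipclaR1} that the bipartition classes of the bipartite red component $R_1$ must be exactly $I_{1,1}$ and $I_{1,2}$: two vertices $x,y\in I_{1,1}$ in different classes of $R_1$ have disjoint red neighbourhoods, which forces $|V(B_1)|>n/2$, and repeating the argument for red neighbours $x',y'$ of $x,y$ in $I_{1,2}$ forces $|V(B_2)|>n/2$ as well --- impossible since $B_1$ and $B_2$ are disjoint. Once~\eqref{bipclaR1} is known for both colours, $I_{1,1}$ is a bipartition class of both $R_1$ and $B_1$, hence edgeless, contradicting~\eqref{equ:dense-Iij-min-deg}. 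You instead refine $I_{1,1}$ into four quadrants, derive a single uniform non-neighbourhood inequality~$(\star)$ for each non-empty quadrant, and dispose of every non-emptiness pattern by summing copies of~$(\star)$; I checked the four-quadrant, three-quadrant, diagonal, adjacent and single-quadrant computations and they all close as you claim (the single-quadrant case coinciding with the paper's final step, and the empty pattern being excluded since the $I_{i,j}$ are non-empty by construction). What the paper's approach buys is brevity --- one structural fact kills all configurations at once --- at the cost of the slightly delicate ``propagate to $I_{1,2}$'' step; what yours buys is that every individual case is a mechanical summation using only~$(\star)$, \eqref{equ:I<1/3}, \eqref{equ:Ri,Bi>1/3} and \eqref{equ:dense-Iij-min-deg}, at the cost of more bookkeeping. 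Both are valid proofs of the claim.
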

\begin{claimproof}
	Suppose that there is a bipartite component, say $R_1$.
	We claim that
	\begin{equation}\label{bipclaR1}
		\text{the bipartition classes of $R_1$ are $I_{1,1}$ and $I_{1,2}$.}
	\end{equation}
	Indeed, otherwise one of the two sets, say $I_{1,1}$, contains vertices $x, y$ from different bipartition classes.
	Clearly, the neighbourhoods of $x$ and $y$ in $R_1$ are disjoint.
	Note that the neighbours of $x$ and $y$ in $G$ are in $I_{1,2} \cup B_1$.
	Thus, each of $x$ and $y$ has at least $(2/3 + 8\gamma)n - |V(B_1)|$ neighbours in  $I_{1,2}$.
	Thus by~\eqref{equ:I<1/3},
	\[
		2\big((2/3 + 8\gamma)n - |V(B_1)|\big)  \leq |I_{1,2}| < (1/3 - 8\gamma) n,
	\]
	implying that
	$|V(B_1)|> n/2$. Applying the same argument to neighbours $x', y'$ of $x, y$ in $I_{1,2}$, we obtain that $|V(B_2)|> n/2$ as well. This is a contradiction, which proves~\eqref{bipclaR1}.

	If, $B_1$ say, is bipartite as well, then the same reasoning shows that one of its bipartition classes is $I_{1,1} $.
	Consequently  $I_{1,1}$ contains no edges, in contradiction to~\eqref{equ:dense-Iij-min-deg}.
	This proves the claim.
\end{claimproof}
It remains to show that either there is a choice of three components among $R_1, R_2, B_1,B_2$ satisfying Lemma~\ref{lem:components}\ref{itm:robust-matching} or the colouring is $(4\gamma)$-extremal as in Definition~\ref{def:extremal-colouring}\ref{conf:four-cycle}.
To this end, assume that no choice of three components among $R_1,R_2,B_1,B_2$ fulfils Lemma~\ref{lem:components}\ref{itm:robust-matching}.

For each $i \in [2]$, let $S_{R_i}$ be a contracting set of minimum size in $G-E(R_i)$, and let $S_{B_i}$ be a contracting set of minimum size in $G-E(B_i)$.
If $v \in V(R_{3-i})$ then $\deg_G(v) = \deg_{G-E(R_i)}(v)$, and so, $v \notin S_{R_i}$ by~\eqref{equ:NS>1/2}.
It follows by symmetry that for $i \in [2]$,
\begin{equation}\label{equ:SRi-subs-VRi}
	\text{$S_{R_i} \subset V(R_i)$ and $S_{B_i} \subset V(B_i)$.}
\end{equation}
In fact, we can show slightly more.
\begin{claim}\label{cla:SRi-subset-Ijj'}
	For each $i \in [2]$ there is $j \in [2]$ such $S_{R_i}\subset \I{i}{j}$.
	Similarly, for each $i \in [2]$ there is $j \in [2]$ such that $S_{B_i}\subset \I{j}{i}$.
\end{claim}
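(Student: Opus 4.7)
My plan is to prove the two assertions of Claim~\ref{cla:SRi-subset-Ijj'} by the same argument with the roles of red and blue swapped, so it suffices to handle~$S_{R_i}$. The strategy is a short contradiction argument driven by the minimality in the definition of~$S_{R_i}$: if~$S_{R_i}$ straddled both~$\I{i}{1}$ and~$\I{i}{2}$, the intersections with those two sets would themselves be non-empty contracting sets in $G-E(R_i)$, each strictly smaller than~$S_{R_i}$, contradicting minimality.

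First I would recall from~\eqref{equ:SRi-subs-VRi} that $S_{R_i}\subset V(R_i)=\I{i}{1}\cup\I{i}{2}$, and suppose for contradiction that both $S_1:=S_{R_i}\cap\I{i}{1}$ and $S_2:=S_{R_i}\cap\I{i}{2}$ are non-empty. Each~$S_j$ is stable in~$G-E(R_i)$ since it is a subset of the stable set~$S_{R_i}$.

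The crucial observation is that once the edges of~$R_i$ are removed, a vertex $v\in\I{i}{j}$ has all of its remaining neighbours inside~$V(B_j)$: its red neighbours lay entirely in $V(R_i)=\I{i}{1}\cup\I{i}{2}$ and are now deleted, while its blue neighbours lie in~$V(B_j)$. Hence $N_{G-E(R_i)}(S_j)\subset V(B_j)$ for $j\in[2]$, and since $V(B_1)\cap V(B_2)=\es$, these two neighbourhoods are disjoint. Combining this disjointness with the contracting hypothesis on~$S_{R_i}$ yields
\[
|N_{G-E(R_i)}(S_1)|+|N_{G-E(R_i)}(S_2)|=|N_{G-E(R_i)}(S_{R_i})|<|S_{R_i}|+\gamma n=|S_1|+|S_2|+\gamma n,
\]
so for some $j\in[2]$ we must have $|N_{G-E(R_i)}(S_j)|<|S_j|+\gamma n$. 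This~$S_j$ is non-empty, stable, and contracting in~$G-E(R_i)$ while $|S_j|<|S_{R_i}|$, contradicting the minimal choice of~$S_{R_i}$.

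The main (and mild) obstacle is just keeping the auxiliary graph straight when computing neighbourhoods: blue edges from~$S_1\subset\I{i}{1}$ to $V(B_1)\setminus\I{i}{1}=\I{3-i}{1}$ are still present in~$G-E(R_i)$ and do contribute to~$N_{G-E(R_i)}(S_1)$, but they still lie entirely in~$V(B_1)$. This containment is precisely what forces $N_{G-E(R_i)}(S_1)$ and $N_{G-E(R_i)}(S_2)$ to be disjoint, splits the inequality additively, and drives the contradiction. The argument for~$S_{B_i}$ is identical after swapping colours and exchanging the roles of the row and column indices of the~$\I{i}{j}$.
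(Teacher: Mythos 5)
Your proof is correct and is essentially identical to the paper's argument: both split $S_{R_i}$ into its intersections with $\I{i}{1}$ and $\I{i}{2}$, observe that after deleting $E(R_i)$ the neighbourhoods of these pieces lie in the disjoint sets $V(B_1)$ and $V(B_2)$, and conclude by additivity that one piece is a strictly smaller contracting set, contradicting minimality.
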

\begin{claimproof}
	Suppose that the claim does not hold for, say $S_{R_2}$.
	So, by~\eqref{equ:SRi-subs-VRi}, $S_{R_2} \cap V(B_i) \neq \es$ for $i = 1,2$.
	Then
	\begin{align*}
		|S_{R_2} \cap V(B_1)|  + | & S_{R_2} \cap V(B_2)|  + 2\gamma n  = |S_{R_2}| +  2\gamma n                                            \\
		                           & > |N_{G-E(R_2)}(S_{R_2})| +\gamma n                                                                    \\
		                           & > |N_{G-E(R_2)}(S_{R_2})|                                                                              \\
		                           & \geq |N_{G-E(R_2)}\left(S_{R_2}\cap V(B_1) \right)| + |N_{G-E(R_2)}\left( S_{R_2}\cap V(B_2) \right)|,
	\end{align*}
	where the last line follows because the red edges incident with $S_{R_2}$ belong to $R_2$ and are therefore not present in $G-E(R_2)$.
	And, moreover, there are no blue edges between $V(B_1)$ and $V(B_2)$.
	Thus by Definition~\ref{def:contracting-set}, at least one of $S_{R_2}\cap V(B_1)$, $ S_{R_2}\cap V(B_2)$ is a contracting set, in contradiction to the minimality of $S_{R_2}$.
\end{claimproof}
\begin{claim}\label{cla:SR2capB1-neq-es->I11-subs-NSRS-cap-B1}
	Let $i,j \in [2]$.
	If $S_{R_i} \cap V(B_j) \neq \es$, then $\I{3-i}{j} \subset N_{G-E(R_i)}(S_{R_i})$.
	Similarly, if $S_{B_i} \cap V(R_j) \neq \es$, then $\I{j}{3-i} \subset N_{G-E(B_i)}(S_{B_i})$.
\end{claim}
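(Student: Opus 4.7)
My plan is to argue by contradiction: assume there is $v \in I_{3-i,j} \setminus N_{G-E(R_i)}(S_{R_i})$ and derive a contradiction with~\eqref{equ:I<1/3}. First I observe that $S_{R_i} \subset I_{i,j}$: the hypothesis $S_{R_i} \cap V(B_j) \neq \es$ together with $S_{R_i} \subset V(R_i)$ from~\eqref{equ:SRi-subs-VRi} gives $S_{R_i} \cap I_{i,j} \neq \es$, and Claim~\ref{cla:SRi-subset-Ijj'} forces $S_{R_i} \subset I_{i,j}$. The key structural observation is that every potential edge between $v \in I_{3-i,j}$ and a vertex of $S_{R_i} \subset I_{i,j}$ is necessarily blue (since the endpoints lie in different red components) and contained in $B_j$ (since they lie in the same blue component), hence such an edge is \emph{not} removed in $G - E(R_i)$. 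In particular, the assumption $v \notin N_{G-E(R_i)}(S_{R_i})$ says that $v$ has no $G$-neighbour in $S_{R_i}$ whatsoever.

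Next I derive two size inequalities. For the first, I bound $\deg_G(v)$ from above using the partition structure: $v$ has no edge to $I_{i,3-j}$ or $I_{3-i,3-j}$ (these lie in ``opposite'' red and/or blue components to $v$) and no edge to $S_{R_i}$ by the previous paragraph, so $N_G(v) \subset (I_{3-i,1} \cup I_{3-i,2} \cup I_{i,j}) \setminus (S_{R_i} \cup \{v\})$. Combining with $\deg_G(v) \geq (2/3+8\gamma)n$ and $|I_{3-i,1}|+|I_{3-i,2}| = n - |V(R_i)|$ yields
\[
|S_{R_i}| + |I_{i,3-j}| \leq (1/3-8\gamma)n - 1.
\]
For the second, fix any $u_0 \in S_{R_i} \subset I_{i,j}$. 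The only edges incident to $u_0$ removed in passing from $G$ to $G-E(R_i)$ are the red ones, which all lie inside $V(R_i)$. Hence $\deg_{G-E(R_i)}(u_0) \geq \deg_G(u_0) - (|V(R_i)|-1) \geq (2/3+8\gamma)n - |V(R_i)| + 1$. Since $|N_{G-E(R_i)}(S_{R_i})| \geq \deg_{G-E(R_i)}(u_0)$ while $S_{R_i}$ is contracting, rearranging yields
\[
|S_{R_i}| + |V(R_i)| > (2/3+7\gamma)n + 1.
\]

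Subtracting the first inequality from the second and using $|V(R_i)| = |I_{i,j}| + |I_{i,3-j}|$ gives $|I_{i,j}| > (1/3+15\gamma)n$, contradicting~\eqref{equ:I<1/3}. The second half of the claim, about $S_{B_i}$ and $I_{j,3-i}$, follows by exchanging the roles of the two colours throughout. The main point one has to be careful about is which edges survive the deletion $G \to G-E(R_i)$: every edge between the different red components $R_i$ and $R_{3-i}$ is automatically blue and is therefore untouched, while every red edge incident to a vertex of $S_{R_i}$ does lie in $R_i$ and is therefore deleted; the contracting condition must then be applied to the right graph.
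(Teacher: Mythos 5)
Your proof is correct and takes essentially the same route as the paper's: both arguments combine the contracting condition applied to a vertex of $S_{R_i}$ (whose degree in $G-E(R_i)$ is at least $(2/3+8\gamma)n-|V(R_i)|$) with the observation that the uncovered vertex of $\I{3-i}{j}$ has no $G$-neighbour at all in $S_{R_i}$ (any such edge would be blue and hence survive the deletion), and then add the two resulting bounds to force $|\I{i}{j}|>(1/3+15\gamma)n$, contradicting~\eqref{equ:I<1/3}. One harmless slip: your parenthetical assertion that $v$ has no edge to $\I{3-i}{3-j}$ is false (red edges inside $R_{3-i}$ are possible), but your displayed containment $N_G(v)\subset(\I{3-i}{1}\cup\I{3-i}{2}\cup\I{i}{j})\setminus(S_{R_i}\cup\{v\})$ does not rely on it, so the computation stands.
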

\begin{claimproof}
	Suppose the claim is wrong for $S_{R_2}$ and $B_1$.
	Let $v \in S_{R_2} \cap V(B_1)$ and $w \in \I{1}{1} \sm  N_{G-E(R_2)}(S_{R_2})$.
	Then by Claim~\ref{cla:SRi-subset-Ijj'},
	$
		S_{R_2} \subset \I{2}{1}.
	$
	Since $S_{R_2}$ is contracting in $G-E(R_2)$, we have
	\[
		|S_{R_2}|+ \gamma n  > |N_{G-E(R_2)}(S_{R_2})| \geq \deg_{G-E(R_2)}(v) \geq (2/3 + 8 \gamma) n - |V(R_2)|.
	\]
	Moreover, as $w$ has its neighbours in $G$ in $I_{2,1} \cup R_1$,
	$$
		|\I{2}{1} \sm S_{R_2}|  \geq \deg_{G-E(R_2)}(w,\I{2}{1}) \geq (2/3 + 8 \gamma) n - |V(R_{1})|.
	$$
	Summing the two inequalities (recalling that $S_{R_2} \subset \I{2}{1}$) gives
	\[
		|\I{2}{1}| > (4/3 + 15 \gamma) n - |V(R_1\cup R_2)| = (1/3 + 15 \gamma)n,
	\]
	in contradiction to~\eqref{equ:I<1/3}.
\end{claimproof}
\begin{claim}\label{cla:SRi-in-unique-Ijj'}
	For each $i,j \in [2]$, the set $\I{i}{j}$ contains at most one of the sets $S_{R_1}, S_{R_2}, S_{B_1}, S_{B_{2}}$.
\end{claim}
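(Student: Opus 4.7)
The plan is to first observe, using Claim~\ref{cla:SRi-subset-Ijj'}, that each of the four sets $S_{R_1}, S_{R_2}, S_{B_1}, S_{B_2}$ is confined to a single piece of the refined partition: $S_{R_k} \subset V(R_k)$ lies inside $I_{k,1}$ or $I_{k,2}$, and $S_{B_\ell} \subset V(B_\ell)$ lies inside $I_{1,\ell}$ or $I_{2,\ell}$. Consequently, the only pair among the four sets that could simultaneously lie in a single $I_{i,j}$ is $S_{R_i}$ together with $S_{B_j}$, so by symmetry it suffices to prove that the case $S_{R_1}, S_{B_1} \subset I_{1,1}$ is impossible. I will assume it holds and aim for a contradiction.

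The first step is to extract quantitative size bounds on $I_{1,2}$ and $I_{2,1}$. Under my assumption both $S_{R_1} \cap V(B_1)$ and $S_{B_1} \cap V(R_1)$ are non-empty, so Claim~\ref{cla:SR2capB1-neq-es->I11-subs-NSRS-cap-B1} yields $I_{2,1} \subset N_{G-E(R_1)}(S_{R_1})$ and $I_{1,2} \subset N_{G-E(B_1)}(S_{B_1})$, and the contracting property of the two sets then gives
\[
|I_{2,1}| < |S_{R_1}| + \gamma n \quad\text{and}\quad |I_{1,2}| < |S_{B_1}| + \gamma n.
\]
Next I pick any $v \in I_{2,2}$: its neighbours lie in $I_{1,2} \cup I_{2,1} \cup I_{2,2}$, so combining the above bounds with $|I_{2,2}| < (1/3 - 8\gamma)n$ from~\eqref{equ:I<1/3} and the hypothesis $\deg_G(v) \ge (2/3 + 8\gamma)n$, I should be able to derive
\[
|S_{R_1}| + |S_{B_1}| > (1/3 + 14\gamma) n.
\]

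To produce the contradiction I split into two cases depending on whether $S_{R_1} \cap S_{B_1}$ is empty. If it is, then $S_{R_1}$ and $S_{B_1}$ are disjoint subsets of $I_{1,1}$, so $|S_{R_1}| + |S_{B_1}| \le |I_{1,1}| < (1/3 - 8\gamma)n$ by~\eqref{equ:I<1/3}, contradicting the lower bound above. Otherwise, pick any $v \in S_{R_1} \cap S_{B_1}$: because $v \in S_{R_1}$, all blue neighbours of $v$ lie in $N_{G-E(R_1)}(S_{R_1})$, giving $\deg_{B_1}(v) < |S_{R_1}| + \gamma n$; symmetrically $\deg_{R_1}(v) < |S_{B_1}| + \gamma n$. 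Since $v \in I_{1,1} \subset V(R_1) \cap V(B_1)$, every edge incident to $v$ in $G$ lies in $R_1$ or $B_1$, whence $\deg_G(v) \le \deg_{R_1}(v) + \deg_{B_1}(v) < 2|I_{1,1}| + 2\gamma n < (2/3 - 14\gamma)n$, again using~\eqref{equ:I<1/3}; this contradicts the minimum degree hypothesis.

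The only delicate observation is that when $S_{R_1} \cap S_{B_1}$ is non-empty the two contracting properties combine: being in $S_{R_1}$ controls the blue degree, being in $S_{B_1}$ controls the red degree, and a vertex lying in both sets therefore has both degrees bounded at once. Apart from this, the argument is a routine combination of~\eqref{equ:I<1/3}, the degree bound $\delta(G) \ge (2/3 + 8\gamma)n$, and Claims~\ref{cla:SRi-subset-Ijj'} and~\ref{cla:SR2capB1-neq-es->I11-subs-NSRS-cap-B1}.
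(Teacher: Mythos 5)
Your proof is correct, and its second half takes a genuinely different route from the paper's. Both arguments share the same set-up: reduce to the single dangerous configuration $S_{R_i},S_{B_j}\subset I_{i,j}$, and combine Claim~\ref{cla:SR2capB1-neq-es->I11-subs-NSRS-cap-B1} with the contracting property to get $|I_{2,1}|<|S_{R_1}|+\gamma n$ and $|I_{1,2}|<|S_{B_1}|+\gamma n$ (in your indexing). From there the paper partitions the opposite block $I_{2,2}$ into sets $T_R$ and $T_B$ of vertices with large red, respectively blue, degree inside $I_{2,2}$, shows $T_R\cup I_{2,1}\subseteq N_{G-E(B_2)}(S_{B_2})$ together with $S_{R_2}\subseteq T_R$, and lands on the mutually contradictory strict inequalities $|S_{R_2}|<|S_{B_2}|$ and $|S_{B_2}|<|S_{R_2}|$. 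You instead use a single vertex of the opposite block only to force $|S_{R_1}|+|S_{B_1}|>(1/3+14\gamma)n$, and then split on whether $S_{R_1}\cap S_{B_1}=\es$: if the sets are disjoint they cannot both fit inside $I_{1,1}$ by~\eqref{equ:I<1/3}, and if they meet, a common vertex has all its blue neighbours in $N_{G-E(R_1)}(S_{R_1})$ and all its red neighbours in $N_{G-E(B_1)}(S_{B_1})$, so its degree is below $2|I_{1,1}|+2\gamma n<(2/3-14\gamma)n$, violating the minimum degree. Your version is shorter and avoids the $T_R/T_B$ bookkeeping; the one point worth being explicit about is that you correctly write $\deg_G(v)\le\deg_\r(v)+\deg_\b(v)$ rather than equality, which is what is needed since the graph here is the reduced multigraph and a pair of vertices may carry edges of both colours.
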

\begin{claimproof}
	Suppose the claim is wrong for $i=j=2$, namely, we suppose that
	$S_{R_2} ,S_{B_2} \subset \I{2}{2}.$
	Then by Claim~\ref{cla:SR2capB1-neq-es->I11-subs-NSRS-cap-B1},
	\begin{equation}\label{*****+++++}
		\I{2}{1}\subseteq N_{G-E(B_2)}(S_{B_2})\text{ and } \I{1}{2}\subseteq N_{G-E(R_2)}(S_{R_2}).
	\end{equation}
	As $S_{R_2}$ and $S_{B_2}$ are each contracting, this gives
	\begin{equation*}
		|\I{2}{1}|  < |S_{B_2}| + \gamma n\text{ and } |\I{1}{2}|  < |S_{R_2}| + \gamma n.
	\end{equation*}
	So, for any $v \in \I{2}{2}$, we have
	\begin{align*}
		\deg_G(v,\I{2}{2}) & \geq (2/3 + 8 \gamma) n-|\I{1}{2}|-|\I{2}{1}|                    \\
		                   & \geq |\I{2}{2}| -|\I{1}{2}| + |\I{2}{2}|-|\I{2}{1}| + 24\gamma n \\
		                   & > |\I{2}{2}| -|S_{R_2}| + |\I{2}{2}|-|S_{B_2}| + 22\gamma n
	\end{align*}
	where the second inequality follows from~\eqref{equ:I<1/3}. Hence $v$ satisfies at least one of the following:
	\[\deg_\b(v,\I{2}{2}) > |\I{2}{2}|-|S_{R_2}| + 11\gamma n
		\text{ \ \ or \ \ }
		\deg_\r(v,\I{2}{2}) > |\I{2}{2}|-|S_{B_2}| +11\gamma n.\]
	Let us call $T_B$ the set of all vertices in $\I{2}{2}$ satisfying the first inequality and let $T_R$ be the set of all vertices in $\I{2}{2}$ satisfying the second inequality.
	Note that $T_R\cup T_B=\I{2}{2}$.
	Moreover, vertices in $T_B$ must each have a blue edge to $S_{R_2}$, and similarly vertices in $T_R$ have each a red edge to~$S_{B_2}$.
	Putting this together with~\eqref{*****+++++}, we obtain
	$$T_R\cup \I{2}{1} \subseteq N_{G-E(B_2)}(S_{B_2})\text{ and }T_B\cup \I{1}{2} \subseteq N_{G-E(R_2)}(S_{R_2}).$$

	By definition of $T_R$ and since the set $S_{R_2}$ is stable in $G-E(R_2)$, we have $S_{R_2}\subseteq T_R$. So, because of~\eqref{equ:dense-Iij-min-deg}, $$|S_{R_2}|\le | T_R|\le |N_{G-E(B_2)}(S_{B_2})| - |\I{2}{1}|\le |S_{B_2}| +\gamma n- 24\gamma n< |S_{B_2}|,$$ and similarly, $|S_{B_2}|<  |S_{R_2}|$, a contradiction.
\end{claimproof}

By Claim~\ref{cla:SRi-subset-Ijj'} and Claim~\ref{cla:SRi-in-unique-Ijj'}
we can assume without loss of generality that $S_{R_2} \subset \I{2}{1}$, $S_{B_1} \subset \I{1}{1}$, $S_{R_1} \subset \I{1}{2}$ and $S_{B_2} \subset \I{2}{2}$.
Thus Claim~\ref{cla:SR2capB1-neq-es->I11-subs-NSRS-cap-B1} yields the following inequalities
\begin{align*}
	|\I{1}{1}| & \leq |N_{G-E(R_2)}(S_{R_2})| < |S_{R_2}| + \gamma n\leq |\I{2}{1}| + \gamma n;           \\
	|\I{2}{1}| & \leq |N_{G-E(B_2)}(S_{B_2})| < |S_{B_2}| + \gamma n\leq |\I{2}{2}|+ \gamma n;            \\
	|\I{2}{2}| & \leq|N_{G-E(R_1)}(S_{R_1})| < |S_{R_1}|+ \gamma n \leq |\I{1}{2}|+ \gamma n; \text{ and} \\
	|\I{1}{2}| & \leq|N_{G-E(B_1)}(S_{B_1})| < |S_{B_1}| + \gamma n\leq |\I{1}{1}|+ \gamma n.
\end{align*}
Hence the colouring is $(4\gamma)$-extremal as in  Definition~\ref{def:extremal-colouring}\ref{conf:four-cycle}, which contradicts our initial assumption.
This finishes the proof of Lemma~\ref{lem:components}.


\section{Covering exceptional vertices: The proof of Lemma~\ref{lem:distribute-exceptional-vertices}}
\label{sec:distribute-exceptional-vertices}

The proof of Lemma~\ref{lem:distribute-exceptional-vertices} goes roughly as follows.
We first pick two sparse and pairwise disjoint families of paths $\Pedge$ and $\Pexc$ such that $\Pedge$ supports paths embeddings in the $H_i$'s and~$\Pexc$ contains the vertices $W$.
We choose the families such that each of their paths connects to one of the $H_i$'s.
This allows Lemma~\ref{lem:connecting-path} to connect these families up to three monochromatic cycles $C = C_1 \cup C_2 \cup C_3$.
Finally, we  adjust the parity of $V(G) \sm V(C)$ using either an odd cycle in one of the $H_i$'s or the fact that otherwise $H_3 =\es$, in which case we can set $C_3$ to be a single vertex.

Before we start, let us formalize what we mean by  a path connecting to one of the $H_i$'s.
\begin{definition}[$\delta$-connect]
	Suppose $G$ is a $2$-edge-coloured graph on $n$ vertices and let $\V = \{V_1,\ldots,V_m\}$ be an $(\eps,d)$-regular partition of $G$.
	Let $\RdG$ be the corresponding  $2$-edge-coloured $(\eps,d)$-reduced multi-graph.
	For $\delta > 0$, we say that a path $P \subset G_\chi$ $\delta$-connects to $H \subset \RdG_\chi$, if both ends of $P$ have at least $\delta|V_x|$ edges of colour $\chi$ leading to the same cluster $V_x$ for some $x \in V(H)$.
\end{definition}

\begin{proof}[Proof of Lemma~\ref{lem:distribute-exceptional-vertices}]
	Let ${1}/{n}  \ll \eps, 1/m \ll d \ll \beta \ll 1$ such that Lemma~\ref{lem:connecting-path} holds with input $\eps, d, m, \ell=m+2, n$.
	Let $t$ be the size of the clusters $V_1,\ldots,V_m$.
	\begin{claim}\label{cla:short-edge-paths}
		Let $S \subset V(G)$ be $\delta$-sparse in $\V$.
		Then there is a family $\Pedge$ of at most $2m^2$ monochromatic paths  with the following properties:
		\begin{itemize}
			\item the paths of $\Pedge$ are pairwise disjoint and disjoint from $S$;
			\item For each $i$, the paths of $\Pedge$ with the same colour as $H_i$ support path embeddings in $H_i$;
			\item each path of $\Pedge$ $(d/2)$-connects to one of the $H_i$'s.
		\end{itemize}
	\end{claim}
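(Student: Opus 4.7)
My plan is to construct $\Pedge$ greedily by processing each edge of $E(H_1) \cup E(H_2) \cup E(H_3)$ in arbitrary order, adding a single three-vertex monochromatic path for each. For an edge $e = ij \in E(H_k)$ of colour $\chi_k$, I will add a path $P_e = u_1 v u_2$ of colour $\chi_k$ with $u_1, u_2 \in V_i$ and $v \in V_j$, all three vertices typical in the regular pair $(V_i, V_j)_{\chi_k}$, and all three avoiding $S$ as well as the vertices already used by previously added paths.

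The appeal of this three-vertex shape is that it simultaneously delivers both of the required properties. The edge $u_1 v$ has both endpoints typical in $(V_i, V_j)_{\chi_k}$, so it can serve as the witness $f_{ij}$ from Definition~\ref{def:spe}; since this is done for every edge of $H_k$, the colour-$\chi_k$ paths of $\Pedge$ together support path embeddings in $H_k$. At the same time, the two endpoints $u_1, u_2$ of $P_e$ both lie in $V_i$ and are typical, so each has at least $(d - \eps)|V_j| \geq (d/2)|V_j|$ edges of colour $\chi_k$ into the single cluster $V_j$; since $j \in V(H_k)$, the path $(d/2)$-connects to $H_k$.

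To see that the greedy step is feasible, I observe that after processing the at most $|E(\RdG)| \leq m(m-1)$ edges, at most $3m^2$ vertices have been used in total, which is negligible compared to $|V_i|$ under the hierarchy $1/n \ll \eps, 1/m \ll \delta \ll d$. Combined with the at most $\eps|V_i|$ atypical vertices of the pair $(V_i, V_j)_{\chi_k}$ in $V_i$ and the $\delta$-sparseness of $S$, the set of admissible candidates for $u_1$ in $V_i$ has size at least $(1 - 2\delta)|V_i|$, and the analogous bound holds in $V_j$. Starting from such a $u_1$, the $\eps$-regularity of the pair in colour $\chi_k$ guarantees that a $\ge (d/2)$-fraction of its $\chi_k$-neighbours in $V_j$ are admissible (typical and unused), yielding $v$; then the same regularity argument finds a typical $\chi_k$-neighbour $u_2 \in V_i$ of $v$ distinct from $u_1$ and not in the forbidden set. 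The final bound $|\Pedge| \leq |E(\RdG)| \leq 2m^2$ is immediate. I do not anticipate a serious obstacle here: the entire argument is routine greedy selection, and the only care needed is the bookkeeping to confirm that the running sparseness of previously used vertices in each cluster stays comfortably within the admissible budget.
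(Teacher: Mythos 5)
Your proposal is correct and follows essentially the same route as the paper: one greedily chooses, for each edge $ij$ of each $H_k$, a two-edge path of the colour of $H_k$ with both ends in $V_i$ and the middle vertex in $V_j$, with the ends having large colour-degree into $V_j$ (which yields both the witnessing edge for Definition~\ref{def:spe} and the $(d/2)$-connection), and the counting of forbidden vertices (atypical, previously used, and in $S$) is the same routine bookkeeping. The only cosmetic difference is that you insist all three vertices be typical in the pair, which is marginally stronger than what the paper writes but changes nothing in the argument.
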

	\begin{claimproof}
		We choose the paths $\Pedge$ greedily as follows. For each $i\in[3]$, we order arbitrarily the edges of $H_i$. When we come to the edge $pq$ of $H_i$, we choose a $2$-edge path with both ends in $V_p$ and the middle vertex in $V_q$, which is disjoint from previously chosen paths and from $S$, whose end-vertices both have at least $dt/2$ neighbours in the colour of $H_i$ in $V_q$. Since at most $2\eps t$ vertices of~$V_p$ do not have degree at least~$d t/2$ to~$V_q$ in the colour of~$H_i$, and at most~$6m^2+\delta|V_p|$ vertices of~$V_p$ are contained in previously chosen paths or in~$S$, we are left with at least~$\tfrac12|V_p|$ vertices of~$V_p$ to choose the endpoints from, and similarly at least~$\tfrac12|V_q|$ vertices of~$V_q$ to choose the middle vertex from. Since $V_pV_q$ is an edge of $H_i$, by definition of $\eps$-regularity there is such a $2$-edge path in the colour of $H_i$ as desired.
	\end{claimproof}
	The next two claims take care of the exceptional vertices.
	\begin{claim}\label{cla:distribute-exc-vertices-a}
		Suppose that $|\bigcup_{i\neq j} V(H_i) \cap V(H_j)| \geq m/3$.
		Let $S$ be $\delta$-sparse in $\V$.
		Then there is a family $\Pexc$ of at most $\beta^{-2}$ monochromatic paths with the following properties:
		\begin{itemize}
			\item the paths of $\Pexc$ are pairwise disjoint and disjoint from $S$;
			\item the set $V(\Pexc)$ contains $W$ and is $\sqrt{\delta}$-sparse in $\V$;
			\item each path of $\Pexc$ $(d/2)$-connects to one of the $H_i$'s.
		\end{itemize}
	\end{claim}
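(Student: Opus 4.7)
The plan is to classify the exceptional vertices into a constant number of types and build one monochromatic path per type. For each $w\in W$ I would first assign a type $(\chi(w),i(w))\in\{\r,\b\}\times[3]$ such that $H_{i(w)}$ has colour $\chi(w)$ and $w$ has at least $d|V_x|$ edges of colour $\chi(w)$ into some cluster $V_x$ with $x\in V(H_{i(w)})$. Since $\deg_G(w)\ge(2/3+\beta)n$, one of $\deg_\r(w),\deg_\b(w)$ is at least $(1/3+\beta/2)n$; combined with condition~\ref{itm:distr-exc-intersection} (which, via~\ref{itm:distr-exc-different-colours}, forces at least $m/3$ clusters to lie simultaneously in a red and a blue $H_i$), a case analysis on the colours of the $H_i$'s will produce such a pair. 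With only six possible types, this partitions $W$ into at most $6\le\beta^{-2}$ groups.

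For each nonempty type $(\chi,i)$ I fix a waypoint cluster $V_p$ with $p\in V(H_i)$ having an $H_i$-neighbour $V_{p'}$, so that $(V_p,V_{p'})$ is $\chi$-regular. Writing $w_1,\ldots,w_q$ for the vertices of this type and $V_{a_1},\ldots,V_{a_q}$ for their landing clusters, I build a single monochromatic $\chi$-path $P$ passing through every $w_j$ as follows. Invoke Lemma~\ref{lem:connecting-path} along a short walk in $H_i$ from $p$ to $a_1$ to find a $\chi$-path starting at a typical vertex of $V_p$ and ending at a $\chi$-neighbour of $w_1$ in $V_{a_1}$; append $w_1$. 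Next, invoke Lemma~\ref{lem:connecting-path} along a walk from $a_1$ to $a_2$, starting at another $\chi$-neighbour of $w_1$ in $V_{a_1}$ and ending at a $\chi$-neighbour of $w_2$ in $V_{a_2}$; append $w_2$; and iterate. Finally close via a walk from $a_q$ back to $p$ ending at a typical vertex of $V_p$. Both endpoints of $P$ then lie in $V_p$ with typical $\chi$-degree to $V_{p'}$, so each has at least $(d/2)|V_{p'}|$ $\chi$-edges to $V_{p'}\in V(H_i)$, which is the required $(d/2)$-connection.

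For sparseness, if each walk has bounded length, then each $w_j$ contributes only $O(1)$ cluster vertices to $P$, so $|V(\Pexc)|=O(|W|)=O(\delta n)$, which distributes to at most $\sqrt{\delta}|V_i|$ per cluster, giving the required $\sqrt{\delta}$-sparseness. Disjointness is maintained by supplying Lemma~\ref{lem:connecting-path} with $S$ together with the vertices of all previously built paths as the sparse set to avoid; this remains $(d/4)$-sparse throughout, since $\delta\ll d$.

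Two technical difficulties will need care. First, in Step~1: in the awkward subcase where the red $G$-component containing $w$ does not correspond to any red $H_i$, we must route through $w$'s blue neighbours and verify that $w$ has enough blue degree landing in a blue $H_i$; the overlap condition together with the structure of the $H_i$'s granted by~\ref{itm:distr-exc-H1H2H3} (in particular, the bridges for $H_2$ in the extremal subcases) should close this argument. Second, keeping walk lengths bounded in each invocation of Lemma~\ref{lem:connecting-path}: this requires choosing the waypoint $V_p$ of sufficiently high $H_i$-degree and, where possible, reselecting each landing cluster $V_{a_j}$ within a bounded-distance neighbourhood of $V_p$ in $H_i$, exploiting the high minimum degree of $\RdG$ inherited from $G$.
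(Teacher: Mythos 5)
Your route is genuinely different from the paper's and, as it stands, it has a gap at the sparseness step. The set $W$ may have as many as $\delta n$ vertices with $\delta\gg 1/m$, while $\sqrt{\delta}$-sparseness demands $|V(\Pexc)\cap V_x|\le\sqrt{\delta}\,|V_x|\approx\sqrt{\delta}\,n/m$ for \emph{every single cluster} $V_x$. Threading consecutive exceptional vertices together with Lemma~\ref{lem:connecting-path} gives you essentially no control over which clusters absorb the connector vertices: the intermediate clusters are dictated by a walk in $H_i$ between the two landing clusters, whose length is bounded only by the diameter of $H_i$, which can be of order $m$ (a monochromatic component of $\RdG$ inherits no minimum-degree or diameter bound from $\delta(\RdG)\ge 2m/3$, so ``choose a high-degree waypoint and reselect landing clusters nearby'' is not available in general). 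Even in the best case of length-$O(1)$ walks, if the walks are forced through a bottleneck cluster you place $\Omega(\delta n)$ connector vertices there, and $\delta n>\sqrt{\delta}\,n/m$ precisely because $\delta\gg 1/m^2$. So both the global bound on $|V(\Pexc)|$ and, more importantly, the per-cluster bound can fail. This is exactly the difficulty the paper's proof is engineered around: after splitting $W$ by majority colour towards $Z=\bigcup_{x\in\Z}V_x$, it forms an auxiliary graph on each part in which two exceptional vertices are adjacent iff they share $\Omega(\beta^3)n$ common neighbours of the relevant colour in $Z$; this graph has independence number $O(1/\beta)$, so P\'osa's theorem decomposes it into $O(1/\beta)$ cycles, and along each cycle consecutive exceptional vertices are joined by a \emph{single} common neighbour chosen greedily from a large set spread over many clusters of $\Z$ --- which is what permits maintaining $\sqrt{\delta}$-sparseness and also certifies the $(d/2)$-connection at the two path ends (these again share many common neighbours, concentrated by averaging in one cluster of $\Z$).

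Two smaller points. First, your type-assignment argument should be run on the degree into $Z$ rather than on the total colour degrees: a vertex can have $\deg_\r(w)\ge(1/3+\beta/2)n$ with all of that red degree landing in clusters whose red component is not among the $H_i$. The correct statement is $\deg_G(w,Z)\ge\beta n/2$ (since $|Z|\ge n/3-\eps n$), whence $w$ has $\ge\beta n/4$ edges of some colour into $Z$, and clusters of $\Z$ lie in $H_i$'s of both colours by~\ref{itm:distr-exc-different-colours}; you gesture at this but the stated justification is not the one that works. Second, $H_2$ is a union of two monochromatic components joined only by bridges in $G$, not by walks in $\RdG$, so a single path per type $(\chi,2)$ cannot be built via Lemma~\ref{lem:connecting-path}; you must refine the types so that each path stays inside one monochromatic connected subgraph (harmless for the claim, which only asks each path to connect to one of the $H_i$'s, but it needs saying).
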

	\begin{claimproof}
		Let us define
		\[\Z= \bigcup_{1 \leq i\neq j \leq 3} V(H_i) \cap V(H_j) \subset V(\RdG) ,\quad  Z = \bigcup_{x \in \Z} V_x\subset V(G). \]
		We set $r =\lceil 4/\beta\rceil$.
		By assumption we have $|\Z|\ge\tfrac{m}{3}$ and hence $$|Z| \geq (n-|V_0|)|\Z|/m\geq n/3 - \eps n.$$
		So as $\delta(G) \geq (2/3 + \beta)n$, it follows that $\deg_G(w,Z) \geq \beta n/2 \geq 2n/r$ for any $w \in V(G)$.
		We partition $W$ by setting $W_1 := \{v \in W\colon \deg_\r(v,Z) \geq n/r\}$ and $W_2 := W \sm W_1$.
		For $i \in [2]$ define an auxiliary graph $A_i$ on vertex set $W_i$, by connecting two vertices $v,w \in W_i$ if $|N_i(v,Z) \cap N_i(w,Z)| \geq n/r^3$.
		We claim that
		\begin{equation}\label{equ:independence-number}
			\text{the independence number of $A_i$ is bounded by $r$.}
		\end{equation}
		Indeed, suppose otherwise and let $w_1, \ldots, w_{r+1}$ be pairwise disjoint non-adjacent vertices in $A_i$.
		We then obtain the following contradiction.
		\begin{align*}
			|Z| & \geq \left|\bigcup_{p\in[r+1]} N_{i}(w_p,Z) \right|                                   \\
			    & \geq (r+1) \frac{n}{r} - \sum_{1 \leq q <p \leq r+1} |N_{i}(w_q,Z) \cap N_{i}(w_p,Z)| \\
			    & \geq |Z| \Big( \frac{r+1}{r} - \frac{\binom{r+1}{2}}{r^3} \Big) > |Z|.
		\end{align*}
		This proves~\eqref{equ:independence-number}.
		Now a classic result of P\'osa (see~\cite{Lov79}) guarantees that $A_i$ can be partitioned into $r_i\leq r$ disjoint cycles $C_{i,1}, \ldots, C_{i,r_i}$, where we consider edges and vertices to be cycles of lengths respectively $2$ and $1$.

		To finish, we build from each $C_{i,j}$ a path $P_{i,j}$ of colour $i$  that $(d/2)$-connects to one of $H_1,H_2,H_3$, such that the union of these paths is $\sqrt{\delta}$-sparse in $\V$.
		In addition to this, we require the paths $P_{i,j}$ to be pairwise disjoint and disjoint from $S$.
		This can be done greedily as follows. We break $C_{i,j}$ into a path by removing one edge, and then create $P_{i,j}$ by connecting one after another two adjacent vertices of this path with a vertex in $Z$.

		To see that we never run out of available vertices in this process, note that by definition of $r$ each two adjacent vertices in one of the $C_{i,j}$'s share $(\beta/4)^3n$ common neighbours of colour $i$ in $Z$.
		So each two adjacent vertices in one of the $C_{i,j}$'s have at least $(\beta^3/8)t \geq (d/2)t$ common neighbours in some cluster of $\Z$.
		Moreover, the restriction of $\sqrt{\delta}$-sparseness and disjointness renders at most
		$$\frac{|W|}{\sqrt{\delta}t} \cdot t+ 2|W| + |S| \leq 3\sqrt{\delta}n$$
		vertices unavailable at any point of embedding.

		Finally, note that since there is an edge in $C_{i,j}$ between the first and last vertices $w$ and $z$ of $P_{i,j}$, so $w$ and $z$ have at least $n/r^3$ common neighbours in $Z$ in colour $i$. By averaging, some cluster~$V_p$ of~$\Z$ contains at least~$t/r^3$ of these common neighbours. Picking $H_{i,j}$ to be one of $H_1,H_2,H_3$ which contains $V_p$ and is of colour $i$ (which must exist since $V_p$ is in two of the $H_1,H_2,H_3$ and these two have different colours by condition~\ref{itm:distr-exc-different-colours} of Lemma~\ref{lem:distribute-exceptional-vertices}) we see that $P_{i,j}$ is $(d/2)$-connected to $H_{i,j}$.
	\end{claimproof}
	\begin{claim}\label{cla:distribute-exc-vertices-b}
		Suppose that~\ref{itm:distr-exc-i}\ref{itm:distr-exc-spanning-and-empty} of the assumptions of Lemma~\ref{lem:distribute-exceptional-vertices} holds.
		Let $S$ be $\delta$-sparse in $\V$.
		Then there is a family $\Pexc$ of at most $\beta^{-2}$ monochromatic paths and a monochromatic cycle $C_3$
		with the following properties:
		\begin{itemize}
			\item the paths of $\Pexc$ are pairwise disjoint and disjoint from $S$;
			\item the set $V(\Pexc) \cup V(C_2)$ contains $W$ and is $\sqrt{\delta}$-sparse in $\V$;
			\item each path of $\Pexc$ $(d/2)$-connects to $H_1$.
		\end{itemize}
	\end{claim}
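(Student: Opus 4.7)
The plan is to follow the scheme of Claim~\ref{cla:distribute-exc-vertices-a} with $\Z := V(H_1) = [m]$ and $Z := V(G) \sm V_0$ (which is possible because $H_1$ is spanning), splitting the exceptional vertices into those that can be absorbed by $\Pexc$ and those that must be absorbed by $C_3$.

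Let $\chi$ denote the colour of $H_1$ and $\bar\chi$ the other colour, and set $r := \lceil 4/\beta\rceil$. Partition
\[
W' := \{ w \in W : \deg_\chi(w,Z) \geq n/r \}
\quad\text{and}\quad
W'' := W \sm W'.
\]
For $W'$, I will repeat the argument of Claim~\ref{cla:distribute-exc-vertices-a} restricted to the single colour $\chi$: form the auxiliary graph on $W'$ joining two vertices whenever they share at least $n/r^3$ common $\chi$-neighbours in $Z$, bound its independence number by $r$ with the same counting argument (which only becomes easier because $|Z|\ge(1-\eps)n$), apply P\'osa's theorem to partition it into at most $r$ cycles, and then convert each cycle into a $\chi$-coloured path in $G$ by breaking one edge and inserting between consecutive vertices a common $\chi$-neighbour in $Z$. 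Since every cluster of $\V$ lies in $V(H_1)$, the same averaging argument as in Claim~\ref{cla:distribute-exc-vertices-a} shows that each such path automatically $(d/2)$-connects to $H_1$. The familiar greedy bookkeeping guarantees that $\Pexc$ is pairwise disjoint, disjoint from $S$, $\sqrt{\delta}$-sparse in $\V$, and contains at most $r\le \beta^{-2}$ paths.

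For $W''$, each $w\in W''$ satisfies $\deg_{\bar\chi}(w) > (2/3+\beta)n - n/r > (2/3)n$, so any two vertices of $W''$ share more than $n/3$ common $\bar\chi$-neighbours in $V(G)$. Since $|W''|\le|W|\le\delta n$, I would order $W''$ cyclically as $w_1,\dots,w_s$ (treating the degenerate cases $|W''|\le 1$ by taking $C_3$ to be a single vertex, and $|W''|=2$ by taking $C_3$ to be a triangle through a common $\bar\chi$-neighbour) and construct a $\bar\chi$-cycle $C_3$ by greedily inserting between each consecutive pair $w_iw_{i+1}$ a common $\bar\chi$-neighbour, chosen to avoid $S\cup V(\Pexc)$ and the vertices already used in $C_3$, and to preserve $\sqrt{\delta}$-sparsity in $\V$. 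At each step the forbidden set has size $O(\sqrt\delta n)$, far below the $n/3$ available common $\bar\chi$-neighbours, so the construction succeeds.

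The main obstacle is realising $C_3$ as a single monochromatic cycle that simultaneously covers $W''$, is disjoint from $S\cup V(\Pexc)$, and keeps $V(\Pexc)\cup V(C_3)$ $\sqrt{\delta}$-sparse in $\V$. This is precisely what the $\chi$-degree deficit of the vertices of $W''$ provides: each of them has almost complete $\bar\chi$-degree and hence linearly many common $\bar\chi$-neighbours with every other vertex of $W''$, leaving ample room for the greedy insertions.
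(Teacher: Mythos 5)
Your proposal is correct and follows essentially the same route as the paper: split $W$ according to whether the $\chi$-degree into $Z=\bigcup_{x\in V(H_1)}V_x$ is at least $n/r$, handle the high-$\chi$-degree part exactly as in Claim~\ref{cla:distribute-exc-vertices-a} restricted to the colour of $H_1$ (with the $(d/2)$-connection to $H_1$ automatic because $H_1$ is spanning), and absorb the remaining vertices, which have $\bar\chi$-degree at least $(2/3+\beta/4)n$, into a single greedily built $\bar\chi$-cycle $C_3$. You in fact supply more detail on the greedy construction of $C_3$ than the paper does; the only cosmetic slip is the ``triangle'' in the $|W''|=2$ case, where the generic insertion of two distinct common $\bar\chi$-neighbours (a $4$-cycle) is what the construction actually yields.
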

	\begin{claimproof}
		The proof is very similar to the one of Claim~\ref{cla:distribute-exc-vertices-a}.
		Without loss of generality we can assume that $H_1$ is red.
		We define
		\[\Z= V(H_1) = V(\RdG) ,\quad  Z = \bigcup_{x \in \Z} V_x\subset V(G). \]
		By the assumptions we have $$|Z| \geq (n-|V_0|)|\Z|/t \geq (1-\eps)n.$$
		It follows that $\deg_G(w,Z) \geq (2/3 + \beta /2)$ for any $w \in V(G)$.
		As before, we us set $r =\lceil 4/\beta\rceil$.
		We can partition $W$ by setting $W_1 := \{v \in W\colon\deg_\r(v,Z) \geq n/r\}$ and $W_2 := W \sm W_1$.
		For the set~$W_1$ we proceed as in Claim~\ref{cla:distribute-exc-vertices-a}.
		For~$W_2$, however, note we now have $\deg_2(v,Z) \geq (2/3 + \beta /4)n$ for every $v \in W_2$.
		Hence we can greedily construct a single blue cycle $C_3$ for the vertices $W_2$, which has the desired properties.
	\end{claimproof}

	Now we can finish the proof of Lemma~\ref{lem:distribute-exceptional-vertices}.
	If $H_i$ has no edges for some $i \in [3]$ we replace it with $H_i = \es$. Since $H_1$ and $H_3$ are monochromatic components, and $H_2$ is the union of at most two monochromatic components, if $H_i$ has no edges then it has at most two vertices. So after this step the assumptions of the lemma still hold with 	\ref{itm:distr-exc-i}\ref{itm:distr-exc-intersection} relaxed to $|\bigcup_{i\neq j} V(H_i) \cap V(H_j)| \geq (1/3+\gamma/2)m$.
	Next, if $H_i = \es$ for some $i \in [2]$, we take $C_i$ to be the empty set.
	If $H_3 = \es$ and we are not in case \ref{itm:distr-exc-i}\ref{itm:distr-exc-spanning-and-empty}, we also take $C_3$ to be the empty set.
	(Note that in case \ref{itm:distr-exc-i}\ref{itm:distr-exc-spanning-and-empty}, $C_3$ plays a more active role as detailed in Claim~\ref{cla:distribute-exc-vertices-b}.)
	In all other cases we construct the $C_i$'s as follows.

	Let us first handle the case, where~\ref{itm:distr-exc-i}\ref{itm:distr-exc-intersection} of the assumptions of Lemma~\ref{lem:distribute-exceptional-vertices} holds.
	By assumption~\ref{itm:distr-exc-H1H2H3}, if $H_2\neq\es$, then $H_2$ is the union of monochromatic components $F_1$ and $F_2$, which admit bridges (Definition~\ref{def:bridges}).
	Hence there are distinct vertices $u,u'$ in $G$, which each $d$-connect to both~$F_1$ and $F_2$, and we let  $\Pbri = \{u,u'\}$ be the set of these two one-vertex paths.  If $H_2=\es$ then we set $\Pbri=\es$.
	We fix a family of paths $\Pedge$ obtained from an application of Claim~\ref{cla:short-edge-paths} with $S = V(\Pbri)$.
	Next we fix a family of paths $\Pexc$ of size at most $\beta^{-2}$ obtained from an application of Claim~\ref{cla:distribute-exc-vertices-a} with $S = V(\Pedge \cup \Pbri)$.
	Note that $\Pbri \cup \Pedge \cup \Pexc$ is $2\sqrt{\delta}$-sparse in $\V$ and contains at most $2+2t^4+ \beta^{-2}$ paths.

	We partition the paths $\Pedge \cup \Pexc = \cP_1\cup \cP_2 \cup \cP_3$ such that each path in  $\cP_i$ is $(d/2)$-connected to $H_i$.
	For $H_2$ we partition $\cP_2 = \cP_{2,1} \cup \cP_{2,2}$ further such that each path in $\cP_{2,j}$ is $(d/2)$-connected to $F_j$.
	To finish we apply Lemma~\ref{lem:connecting-path} to connect the $\cP_i$'s up to three monochromatic cycles.
	At this point it is important that $H_1,H_3,F_1,F_3$ are monochromatic connected subgraphs.

	For $H_1$ we orient the paths of $\cP_1$ arbitrarily and then choose an arbitrary cyclic order of $\cP_1$.
	We then use Lemma~\ref{lem:connecting-path} to connect one of the $dm$ neighbours of the end of each path of $\cP_1$ with one of the $dm$ neighbours of the start of the next path of $\cP_1$ (in the chosen order). Note that since $H_1$ is a connected graph on at most $m$ vertices, any such connection requires a path with at most $m+1$ vertices.
	This way we obtain the cycle $C_1$ and similarly $C_3$.
	For $H_2$ we want to use the paths $\Pbri$.
	So for each $j\in[2]$ we orient the paths of $\cP_{2,j}$ arbitrarily, then choose an order on the paths of $\cP_{2,j}$ which starts in a cluster with many colour-$H_2$-neighbours of $u$.
	We then proceed as before to obtain the cycle $C_2$, except that after connecting the paths of $\cP_{2,j}$ we add one further connecting path to a colour-$H_2$-neighbour of $u'$.

	Recall that $\Pbri \cup \Pedge \cup \Pexc$ is $2\sqrt{\delta}$-sparse in $\V$ and contains at most $2+2t^4+ \beta^{-2}$ paths.
	Since each of the application of Lemma~\ref{lem:connecting-path} yields a path of order at most $2t^2$ and $(2+2t^4+ \beta^{-2})2t^2 \ll \sqrt{\delta} m$, it follows that $C = C_1 \cup C_2 \cup C_3$ is $(3\sqrt{\delta})$-sparse in $\V$.
	Hence by setting $S$ to contain the vertices of all paths chosen so far in each of the above applications of Lemma~\ref{lem:connecting-path}, we can guarantee that the $C_i$'s are vertex disjoint.

	It remains to ensure that $V(G) \sm C$ has even size.
	So suppose this is not the case.
	If each of the~$H_i$'s is bipartite, then $H_3 = \es$ by assumption and so $C_3 = \es$.
	In this case we set $C_3 = v$ for some unused vertex $ v \in V(G) \sm V(C)$.
	On the other hand, suppose, $F_1$ say, has an odd cycle.
	Note that this implies that $F_1$ contains walks of any parity between any two vertices.
	Let $P$ be one of the paths obtained by Lemma~\ref{lem:connecting-path} and which connects two paths of $\cP_{2,1}$.
	We use Lemma~\ref{lem:connecting-path} to swap~$P$ for a path $P'$ whose order has the opposite parity to that of $P$.
	Then $C' = C_1 \cup C_2 \cup C_3 \sm P \cup P'$ has the desired parity.
	The cases where $H_1$, $H_3$, or $F_2$ have an odd cycle are handled the same way.
	This finishes the proof for the case~\ref{itm:distr-exc-intersection}.

	Now assume that~\ref{itm:distr-exc-i}\ref{itm:distr-exc-spanning-and-empty} of the assumptions of Lemma~\ref{lem:distribute-exceptional-vertices} holds.
	We proceed very similarly.
	The main difference is that the paths returned by Lemma~\ref{cla:distribute-exc-vertices-b} correspond only to $H_1$ and $H_2$, i.e. $\cP_3 = \es$.
	So for $H_1,H_2$ we can proceed as in the case of~\ref{itm:distr-exc-intersection} and obtain cycles $C_1,C_2$.
	Finally, we take $C_3$ to be the cycle returned by Lemma~\ref{cla:distribute-exc-vertices-b}, which is possible as by~\ref{itm:distr-exc-spanning-and-empty} we have $H_3 = \es$.
	It remains to ensure that $V(G) \sm C$ has even size.
	As in case~\ref{itm:distr-exc-spanning-and-empty} we are guaranteed that $H_1$ or $H_2$ contains an odd cycle, we may swap a path $P$ for a path $P'$ whose order is of the opposite parity as we did in  case~\ref{itm:distr-exc-intersection} to obtain a set of cycles of the desired parity.
\end{proof}


\section{Balancing: The proof of Lemma~\ref{lem:balancing}}\label{sec:balancing}

In this section we prove Lemma~\ref{lem:balancing}. The idea is simple. First we
set some $t_{e}$ equal to one and others to zero such that the shortfall at each
vertex is even. Then we create an auxiliary graph $H'$ obtained by blowing up
each vertex of $H$ to exactly half its shortfall number of vertices, and find a
$2$-matching in $H'$; this tells us how much to increment each $t_e$ by.

\begin{proof}[Proof of Lemma~\ref{lem:balancing}]
	Suppose $t\ge m/\gamma$ and let~$H$ be a
	connected $m$-vertex graph that is $\gamma$-robust Tutte.
	For a vertex~$i$ in~$H$ denote by~$E_i$ the set of edges in~$H$ such that $i\in e$.

	We start with adjusting parity for~$i$ by defining $\pi_e$ for $e\in E(H)$ such that
	$t_i-\sum_{e\in E_i}\pi_e$ is even. For this purpose
	let $T$ be an arbitrary spanning tree in $H$ with root $r$. The idea is that
	for the leaves~$i$ of this tree, the value $\pi_e$ for the single edge~$e$ containing~$i$ will
	adjust the parity of~$t_i$. Then we will work upwards from the leaves to
	the root, iteratively adjusting the parity of~$t_i$ for a vertex~$i$ by
	appropriately setting $\pi_e$ for the unique edge~$e$ containing~$i$ in the
	path from~$i$ to~$r$. How we set this $\pi_e$ depends on the value of the
	$\pi_{e'}$ for the other edges containing~$i$ that we set previously.

	Formally, for each $ij=e\in E(T)$ with~$j$ closer to the root than~$i$, let
	$X(e)$ denote the set of vertices of the component of $T-e$ that contains~$i$.
	If $e\not\in E(T)$ then we set $\pi_e=0$. For $e\in E(T)$ let
	$\pi_e=(\sum_{j\in X(e)}t_j)\!\!\mod 2$. We have for each $i\in V(H)$ that
	\begin{equation}\label{eq:parity}
		t_i-\sum_{e\in E_i}\pi_e
		=t_i-\sum_{e\in E(T)\cap E_i}\pi_e
		=t_i-\sum_{e\in E(T)\cap E_i}\bigg(\Big(\sum_{j\in X(e)}t_j\Big)\hspace{-.8em}\mod 2\bigg)\,.
	\end{equation}
	Now, let $e_1,\dots,e_s$ be the edges in~$E_i$, where~$e_1$
	is the unique edge among these such that~$j$ is closer to the root than~$i$.
	Observe that we have $X(e_\ell)\cap X(e_{\ell'})=\es$ for each
	$2\le\ell<\ell'\le s$ and
	$X(e_1)=\{i\}\cup \bigcup_{2\le\ell\le s} X(e_\ell)$.
	This implies that in the last sum in~\eqref{eq:parity} the terms appearing are
	exactly the following: The term $t_i$ appears once, and for each $j\in
		X(e_1)\setminus\{i\}$ the term $t_j$ appears twice. We conclude that
	the quantity $2n_i:=t_i-\sum_{e\in E_i}\pi_e$
	is an even integer, so that $n_i$ is an integer.
	Since $t\ge 5m/\gamma$ we have $|E_i|\le m\le\gamma t/5$ for each $i$, and so because
	$t_i=(1\pm\frac15\gamma)t$ we get $n_i=(1\pm\frac25\gamma)t/2$.

	In the second part of this proof, we will define non-negative integers $t''_e$ such that
	\begin{equation}\label{eq:shortfall}
		\sum_{e\in E_i} t''_e = 2n_i\,.
	\end{equation}
	This suffices to prove the lemma since setting
	$t_e:=t''_e+\pi_e$ gives non-negative values such that
	$\sum_{e\in E_i} t_e = 2n_i+\sum_{e\in E_i}\pi_e = t_i$ as desired.

	For obtaining these~$t''_e$, consider the graph $H'$ obtained from $H$ by
	blowing up each $i$ into an stable set~$W_i$ of $n_i$ vertices, and
	replacing edges with complete bipartite graphs. Let $S'$ be a stable set in
	$H'$. We aim to show $\big|N_{H'}(S')\big|\ge|S'|$. Without loss of generality,
	we can assume $S'$ is a union of the blow-ups of some vertices $S\in V(H)$
	(otherwise we can add any vertices missing from such a blow-up to $S'$ without
	changing the left hand side), where $S$ must be a stable set in $H$. Then,
	since~$H$ is $\gamma$-robust Tutte, we have
	\begin{equation*}\begin{split}
			\big|N_{H'}(S')\big|& = \sum_{i\in N_H(S)} n_i
			\ge(1-\tfrac25\gamma)\tfrac{t}{2}\big|N_H(S)\big| \\
			&\ge(1-\tfrac25\gamma)(1+\gamma)\tfrac{t}{2}|S|
			\ge \frac{1-\tfrac25\gamma}{1+\tfrac25\gamma}(1+\gamma)|S'|\ge|S'|\,,
		\end{split}\end{equation*}
	where the last inequality uses $\gamma\le\frac12$. Thus by
	Theorem~\ref{thm:tutte-2-matching} there is a perfect $2$-matching $\Mblow$ in $H'$.
	For $e=ij\in E(H)$, recall that $W_i\times W_j$ is the blow-up of this edge
	in~$H'$. Now set $t''_e:=\sum_{f\in W_i\times W_j}\omega(f)$, where $\omega(f)=2$ if $e$ is on an edge of $\Mblow$, $\omega(f)=1$ if $e$ is on a cycle of $\Mblow$, and $\omega(e)=0$ otherwise.
	We have
	\begin{equation*}
		\sum_{e\in E_i} t''_e =
		\sum_{ij\in E_i}\sum_{f\in W_i\times W_j}\omega(f) =
		\sum_{w\in W_i}\sum_{f\in E(H')\colon w\in f}\omega(f) =
		n_i\cdot 2\,,
	\end{equation*}
	where the last equality uses that $|W_i|=n_i$, the definition of~$\omega$ and the fact that $\Mblow$ is
	a perfect $2$-matching.
	Thus we obtain~\eqref{eq:shortfall}, as required.
\end{proof}


\section{Extremal cases: The proof of Lemma~\ref{lem:extremal-colouring}}
\label{sec:extremal-colourings}

For the proof of Lemma~\ref{lem:extremal-colouring} it is natural to distinguish
two cases corresponding to the two cases in the definition of extremal colourings.
For both cases we choose
\begin{equation*}
	{1}/{n}  \ll \eps,1/m \ll d \ll \gamma \ll \beta\,,
\end{equation*}
suitable for various estimates and we
suppose $G$ is a graph on $n$ vertices with minimum degree $\delta(G) \geq (2/3 +
	\beta)n$ whose edges are coloured with red and blue, and with
a coloured $(\eps,d)$-reduced multi-graph $\RdG$ on $m$ vertices corresponding
to an $(\eps,d)$-regular partition $\V=\{V_1,\dots,V_m\}$ of~$G$ with
exceptional set~$V_0$. By~\eqref{equ:reduced-graph-min-deg} we have
\begin{equation*}
	\delta(\RdG)\ge(2/3 + \beta/2) m\,.
\end{equation*}
Further, we let~$G'$ be the ``cleaned'' coloured subgraph of~$G$ on vertex set
${V(G)\setminus V_0}$ corresponding
to the $(\eps,d)$-reduced graph~$\RdG$ with properties as guaranteed by
Lemma~\ref{lem:regularity}: In particular, each red (blue) edge of~$G'$ is in
some pair $(V_i,V_j)$ such that $ij$ is a red (blue) edge of~$\RdG$.
Further, let $G''$ be the spanning subgraph of~$G$ which coincides with~$G'$ on
$V(G)\setminus V_0$ and has all $G$-edges incident to vertices in~$V_0$.
Clearly,
\begin{equation*}
	\delta(G'),\delta(G'')\ge(2/3+\beta-2d-\eps)n\ge(2/3+\beta/2)n\,.
\end{equation*}

\subsection{Case~1: A colouring as in Definition~\ref{def:extremal-colouring}\ref{conf:bipartite}}

We start with the case that the coloured reduced multi-graph has one spanning
bipartite connected red component, and exactly two blue components and these are
bipartite as well and do not admit bridges.

\begin{proof}[Proof of Lemma~\ref{lem:extremal-colouring} when $\RdG$ has a
		colouring as in Definition~\ref{def:extremal-colouring}\ref{conf:bipartite}]
	Let $R$ be the spanning, red say, bipartite component of $\RdG$ with bipartition classes $\cX_{R}$ and $\cY_{R}$.
	As the colouring is $(4\gamma)$-extremal we have $\left||\cX_{R}| - |\cY_{R}| \right| \leq 4 \gamma m$ and hence
	\begin{equation}\label{equ:|X|=|Y|}
		m/2- 4\gamma m \leq |\cX_{R}|,|\cY_{R}| \leq m/2+ 4\gamma m.
	\end{equation}
	Set $X_{{R}} := \bigcup_{i \in \cX_{{R}}} V_i$ and $Y_{{R}} :=
		\bigcup_{i \in \cY_{{R}}} V_i$. In particular, any vertex~$x$
	in~$X_{R}$ (in~$Y_{R}$, respectively) sends red edges in~$G'$ only to
	vertices in~$Y_{R}$ (in~$X_{R}$, respectively) and blue edges in~$G'$ only
	to vertices in~$X_{R}$ (in~$Y_{R}$, respectively).
	Let $B_j$ with $j=1,2$ be the bipartite blue components
	of~$\RdG$ with bipartition classes $\cX_{B_j},\cY_{B_j}$. Suppose that $V(B_1)
		= \cX_{R}$ and $V(B_2) = \cY_{R}$. Let
	$X_{{B_j}} := \bigcup_{i \in \cX_{{B_j}}} V_i$ and $Y_{{B_j}} :=
		\bigcup_{i \in \cY_{{B_j}}} V_i$ for $j=1,2$. Thus $\{X_{B_1},Y_{B_1}\}$ is a partition of $X_{R}$ and $\{X_{B_2},Y_{B_2}\}$ is a partition of $Y_{R}$.

	Let us next estimate the sizes of the $\cX_{B_j}$, $\cY_{B_j}$.
	By~\eqref{equ:|X|=|Y|}, we have \[\deg_{G'_{\b}}(x,\cX_{R}), \deg_{G'_\b}(y,\cX_{R})
		\geq (1/6 + 4 \gamma) m\] for any $x,y \in \cX_{R} $. If $xy$ is additionally
	a (blue) edge, then bipartiteness implies $N_{G'_\b}(x,\cX_{R}) \cap
		N_{G'_\b}(y,\cX_{R}) = \es$ and hence
	\begin{equation}
		|\cX_{B_1}|,|\cY_{B_1}| \leq |\cX_{R}| - (1/6 + 4 \gamma)m \overset{\eqref{equ:|X|=|Y|}}{\leq} m/3.
	\end{equation}
	By symmetry we also have $|\cX_{B_2}|,|\cY_{B_2}| \leq  m/3$.
	We conclude for $j=1,2$ that
	\begin{equation}
		\label{equ:B1,B2>1/3}
		|X_{B_j}|,|Y_{B_j}|\le n/3\,.
	\end{equation}

	We next distribute the exceptional vertices in~$V_0$ as follows.
	We let
	\begin{align*}
		X_{{R}}' & = \{v \in V(G)\colon\deg_{G''_\b}(v,Y_{{R}}) \le \tfrac1{10}\beta n \}, \\
		Y_{{R}}' & = \{v \in V(G)\colon\deg_{G''_\b}(v,X_{{R}}) \le \tfrac1{10}\beta n\}.
	\end{align*}
	Clearly, $X_{R}\subset X_{R}'$ since vertices in~$X_{R}$ send no
	blue edges to $Y_{R}$ in~$G'$ (and hence in~$G''$); analogously,
	$Y_{R}\subset Y_{R}'$. By~\eqref{equ:|X|=|Y|} this implies
	\begin{equation}\label{equ:|X'|=|Y'|}
		n/2- 5\gamma n \leq |X'_{R}|,|Y'_{R}| \leq n/2+ 5\gamma n\,.
	\end{equation}
	Moreover, since by assumption $(B_1,B_2)$ does not admit bridges, $V(G) \sm (X_{{R}}'
		\cup Y_{{R}}')$ contains at most one vertex; call this vertex~$z^*$ if it
	exists, and otherwise we say $z^*$ does not exist.

	Without loss of generality suppose that $|X_{{R}}'| \geq |Y_{{R}}'|$.
	Our goal is to use two blue cycles to cover $|X_{R}'|-|Y_{R}'|$ vertices
	in~$X_{R}'$ and the vertex~$z^*$, and to use a red cycle to cover all
	remaining vertices. We now prepare for this be establishing some further
	degree bounds.
	By definition, any $x \in X'_{R}$ and $y\in Y'_{R}$ satisfy
	\begin{equation}\label{equ:deg-in-Y-large}\begin{split}
			\deg_{G''_\r}(x,Y'_{R}) & \geq (\tfrac23 + \tfrac12\beta)n - |X'_{R}| - \tfrac1{10}\beta n - 1 \\
			& \geq (\tfrac23 + \tfrac12\beta)n - (\tfrac12 + 5\gamma)n - \tfrac1{10}\beta n - 1
			\geq (\tfrac16 + \tfrac14\beta)n\,, \\
			\deg_{G''_\r}(y,X'_{R}) &\ge (\tfrac16 + \tfrac14\beta)n\,.
		\end{split}\end{equation}
	For $x\in X_{R}$ and $y\in Y_{R}$ we know even more: The vertex~$x$ is
	either in~$X_{B_1}$ or in~$Y_{B_1}$. In the former (latter) case it
	sends all its $G''$-edges in~$X_{R}$ to~$Y_{B_1}$
	(to~$X_{B_1}$). This together with~\eqref{equ:B1,B2>1/3} (and an analogous argument
	for~$y$) gives
	\begin{equation}\label{equ:deg-in-Y-larger}\begin{split}
			\deg_{G''_\r}(x,Y'_{R}) & \geq (\tfrac23 + \tfrac12\beta)n-|Y_{B_1}| - |V_0| - \tfrac1{10}\beta n - 1 \\
			& \ge (\tfrac13 + \tfrac14\beta)n\,. \\
			\deg_{G''_\r}(y,X'_{R})
			& \ge (\tfrac13 + \tfrac14\beta)n\,.
		\end{split}\end{equation}
	Concerning blue edges incident to $x\in X_{R}$ and $y\in Y_{R}$ we know
	\begin{equation}\label{equ:blue-deg-in-X}
		\deg_{G'_\b}(x,X_{R}) \ge (\tfrac23 + \tfrac12\beta)n-|Y_{R}|-|V_0|
		\ge (\tfrac16 + \tfrac14\beta)n\,.
	\end{equation}

	Let us now construct our three cycles.
	We first construct a blue cycle~$C_2$ with $V(C_2) \subset X_{R}$, of length
	\[2\ell=2\lceil (|X_{{R}}'| - |Y_{{R}}'|)/2\rceil \leq 10\gamma n + 1 \leq 11 \gamma n\,.\]
	For this, observe that the blue bipartite graph $F=G'_\b[X_{B_1},Y_{B_1}]$ has
	partition classes of size at most~$n/3$ by~\eqref{equ:B1,B2>1/3} and minimum
	degree at least $(\frac16+\frac14\beta)n$ by~\eqref{equ:blue-deg-in-X}.
	Hence, picking an arbitrary vertex $x\in X_{B_1}$ we can greedily construct
	two paths $P_1$, $P_2$ with $\ell-1$ edges starting in~$x$ which are
	otherwise vertex-disjoint. Let~$y_1$ be the endpoint of~$P_1$ and~$y_2$ the
	endpoint of~$P_2$. Clearly, $y_1$ and~$y_2$ are in the same partition class
	of~$F$ and thus have a common neighbour~$x'$ in $V(F)\setminus\big(V(P_1)\cup V(P_2)\big)$
	since $\gamma\ll\beta$. Thus, $P_1$, $P_2$ and~$x'$ give the desired blue
	cycle~$C_2$ on $2\ell$ vertices.

	Our second cycle is also blue, and takes care of parity and the vertex~$z^*$ if it exists.
	If $|X_{{R}}' \sm V(C_2)| = |Y_{{R}}' \sm V(C_2)|$, we set $C_3:= z^*$ (a one-vertex cycle) if $z^*$ exists, and if $z^*$ does not exist we let $C_3$ be the zero-vertex cycle. If
	$|X_{{R}}' \sm V(C_2)| = |Y_{{R}}' \sm V(C_2)|+1$ and $z^*$ exists, then we choose $y\in X'_R\sm V(C_2)$ such that $yz^*$ is a blue edge (which exists by definition of $z^*$) and let~$C_3$ be the $2$-vertex blue cycle $(y,z^*)$. Finally if $|X_{{R}}' \sm V(C_2)| = |Y_{{R}}' \sm V(C_2)|+1$ and $z^*$ does not exist, we choose any $y\in X'_R\sm V(C_2)$ and let~$C_3$ be the one-vertex blue cycle $z^*$. Hence we
	have $|X_{{R}}' \sm V(C_2 \cup C_3)| = |Y_{{R}}' \sm V(C_2 \cup C_3)|$.

	To finish we claim that $H=G''_\r[X'_{R} \sm V(C_2 \cup C_3),Y'_{R}\sm V(C_2\cup C_3)]$ has
	a Hamilton cycle~$C_1$. This follows from
	Corollary~\ref{cor:bipartite-chvatal}. Note that the degree conditions are
	fulfilled, because $\delta(H)\geq (1/6+\beta/10)n$
	by~\eqref{equ:deg-in-Y-large}, and all but at most $\eps n$ vertices of~$H$
	have degree at least $(1/3 + \beta/10)n$ by~\eqref{equ:deg-in-Y-larger}.
\end{proof}

\subsection{Case~2: A colouring as in Definition~\ref{def:extremal-colouring}\ref{conf:four-cycle}}

In the second case, the reduced graph has two red and two blue components, and
there are neither bridges in red nor in blue.

\begin{proof}[Proof of Lemma~\ref{lem:extremal-colouring} when $\RdG$ has a colouring as in Definition~\ref{def:extremal-colouring}\ref{conf:four-cycle}]
	We denote the red and blue components of the reduced multi-graph $\RdG$ by $\R_1,\R_2$ and $\B_1,\B_2$ and suppose that there are no red or blue bridges.
	We set $\cI{i}{j} = V(\R_i) \cap V(\B_j)$ for $i,j \in [2]$.
	We may assume that all but at most~$\sqrt{\gamma} m $ vertices $\cQ_{i,j} \subset V(\R_i) \cap V(\B_i)$ have more than $\sqrt{\gamma} m$ red neighbours in $V(\R_i) \cap V(\B_i)$ if $i=j$ and more than $\sqrt{\gamma} m$ blue neighbours otherwise.
	Let $\I{i}{j} = \bigcup_{x \in \cI{i}{j} \sm \cQ_{i,j}} V_x$ for $i,j \in [2]$  and $V_0' = V_0 \cup \bigcup_{x \in \cQ_{i,j}} V_x$.
	We have
	\begin{align}
		(1/4 - {5\sqrt{\gamma}})n & \overset{}{\leq} (1-\eps)|\cI{i}{j}| {n}/{m} \overset{}{\leq} |\I{i}{j}|\overset{}{\leq} |\cI{i}{j}| {n}/{m} \leq ( {1}/{4} + {5\sqrt{\gamma}})n, \label{equ:Iij<1/4} \\
		|V_0'|                    & \leq  |V_0| +  4|\cQ_{i,j}| n/m \leq \eps n + 4\sqrt{\gamma} n \leq 5\sqrt{\gamma} n. \label{equ:size-V_0'}
	\end{align}
	For $i,j \in [2]$ and any vertex~$v\in \I{i}{j}$, all $G'$-neighbours of~$v$ in
	$\I{3-i}{j}$ are blue, all $G'$-neighbours of~$v$ in $\I{i}{3-j}$ are red, $v$
	has no $G'$-neighbours in $\I{3-j}{3-j}$, and all $G'$-neighbours of~$v$ in
	$\I{i}{j}$ are blue if $i=j$ and red otherwise. This implies the following
	neighbourhood profiles of vertices in~$\bigcup_{i,j \in [2]} \I{i}{j}$.

	\begin{claim}\label{cla:I-profiles}
		For each $i,j \in [2]$, every vertex $v \in \I{i}{j}$ we have
		\begin{align*}
			\deg_{G'_\b}(v,\I{3-i}{j}) & \ge(1/6+ \beta /2)n\,, \\
			\deg_{G'_\r}(v,\I{i}{3-j}) & \ge(1/6+ \beta /2)n\,, \\
			\deg_{G'_c}(v,\I{i}{j})    & \ge(1/6+ \beta /2)n\,,
		\end{align*}
		where $c$ is blue if $i=j$, and $c$ is red  otherwise.
	\end{claim}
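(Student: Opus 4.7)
The plan is to derive all three bounds from a single minimum-degree argument, and then promote the within-cell estimate to the correct colour using the cleaning that defines the $\cQ_{i,j}$.

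First, because $\cI{i}{j}$ and $\cI{3-i}{3-j}$ share neither a red nor a blue component of $\RdG$, the vertex $v$ has no $G'$-neighbours in $\I{3-i}{3-j}$, so all $G'$-neighbours of $v$ lie in $\I{i}{j}\cup\I{3-i}{j}\cup\I{i}{3-j}\cup(V_0'\setminus V_0)$. Combining $\delta(G')\ge(\tfrac23+\beta-2d-\eps)n$ with the cell bounds $|\I{i'}{j'}|\le(\tfrac14+5\sqrt\gamma)n$ and $|V_0'|\le 5\sqrt\gamma n$ from~\eqref{equ:Iij<1/4} and~\eqref{equ:size-V_0'}, a routine rearrangement yields
\[
  \deg_{G'}(v,\I{i'}{j'})\ \ge\ \bigl(\tfrac16+\beta-O(\sqrt\gamma+d+\eps)\bigr)n\ \ge\ \bigl(\tfrac16+\tfrac\beta2\bigr)n
\]
for every $(i',j')\in\{(i,j),(3-i,j),(i,3-j)\}$, where the last inequality uses $\sqrt\gamma,d,\eps\ll\beta$.

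From this, the first two inequalities of the claim follow immediately. Since $\cI{i}{j}$ and $\cI{3-i}{j}$ share the blue component $\B_j$ but lie in different red components, every $\RdG$-edge between them is blue, so $\deg_{G'_\b}(v,\I{3-i}{j})=\deg_{G'}(v,\I{3-i}{j})$; symmetrically, $\cI{i}{j}$ and $\cI{i}{3-j}$ share only the red component $\R_i$, giving $\deg_{G'_\r}(v,\I{i}{3-j})=\deg_{G'}(v,\I{i}{3-j})$. Both are then bounded below by the display above.

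The main obstacle is the third inequality, since edges inside $\I{i}{j}$ may a priori carry either colour. Here I would invoke the defining property of $\cQ_{i,j}$: as the cluster $x$ containing $v$ lies in $\cI{i}{j}\setminus\cQ_{i,j}$, it has at most $\sqrt\gamma m$ $\RdG$-neighbours in $\cI{i}{j}$ of the wrong colour (red when $i=j$, blue otherwise). Each such cluster contributes at most $n/m$ vertices to $\I{i}{j}$, so $v$'s wrong-colour $G'$-degree into $\I{i}{j}$ is at most $\sqrt\gamma\,n$; subtracting this from the lower bound $\deg_{G'}(v,\I{i}{j})\ge(\tfrac16+\beta-O(\sqrt\gamma+d+\eps))n$ from the first step still leaves at least $(\tfrac16+\tfrac\beta2)n$ edges of the correct colour $c$, as claimed.
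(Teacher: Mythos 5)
Your proof is correct and follows essentially the same route as the paper's: a minimum-degree count against the two reachable off-diagonal cells and $V_0'$ gives the total $G'$-degree of $v$ into each of the three cells, the cross-cell degrees are automatically monochromatic because the relevant pairs of cells share exactly one monochromatic component of $\RdG$, and the within-cell wrong-colour degree is bounded by $\sqrt{\gamma}\,n$ via the definition of $\cQ_{i,j}$. The paper writes out only the third inequality and declares the others analogous, so your explicit treatment of all three parts matches its argument and is, if anything, slightly more complete.
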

	\begin{claimproof}
		We will show the third part for a vertex $v \in  \I{1}{1}$.
		The other cases follow by analogous arguments.
		By~\eqref{equ:size-V_0'}, all but $|V_0'|\le 5\sqrt{\gamma} n$ vertices of $G$ are covered by $\bigcup_{i,j \in [2]} \I{i}{j}$.
		Suppose that $v \in V_x$ for some  $x \in \cI{i}{j} \sm \cQ_{1,1}$.
		By definition of $\cQ_{1,1}$, it follows that $|N_{\RdG_\r}(x) \cap \cI{1}{1} | \leq \sqrt{\gamma} n$.
		Moreover, we have $|V_y| \leq n/m$ for each $y \in \cI{1}{1}$.
		This and~\eqref{equ:Iij<1/4} implies that
		\begin{align*}
			\deg_{G'_\b}(v,\I{1}{1}) & = (2/3+\beta)n -|\I{1}{2}| - |\I{2}{1} | - 5\sqrt{\gamma} n - \sqrt{\gamma} m\cdot n/m
			\\ &\geq (1/6 + \beta /2)n\,.\tag*{\qedhere}
		\end{align*}
	\end{claimproof}

	Note that since $(\R_1,\R_2)$ does not admit red or blue bridges, there is at
	most one vertex  $x_\r^*\in V_0'$ with $dn$ red $G''$-neighbours in $\I{1}{1} \cup
		\I{1}{2}$ and $dn$ red $G''$-neighbours in $\I{2}{1} \cup \I{2}{2}$. If there is no such vertex, we say $x_\r^*$ does not exist.
	Similarly, there is at most one vertex $x_\b^*\in V_0'$ with $dn$ blue $G''$-neighbours in $\I{1}{1} \cup \I{2}{1}$ and $dn$ blue $G''$-neighbours in $\I{1}{2} \cup \I{2}{2}$, and if there is no such vertex we say $x_\b^*$ does not exist. Let~$X^*$ be the set containing the existing vertices among $x^*_\r$ and $x^*_\b$ (so $0\le|X^*|\le 2$).

	We shall next distribute the exceptional vertices in~$V_0'$.

	\begin{claim}\label{cla:I'}
		We can partition the vertices of $V_0' \sm X^*$ into sets
		$\{\I{i}{j}'\}_{i,j \in [2]}$ (of size at most $5 \sqrt{\gamma} n$)
		such that every $v \in \I{i}{j}'$ satisfies
		\begin{equation*}
			\deg_{G''_\b}(v,\I{3-i}{j}) \ge({1}/{6} +\beta/2)n \,, \qquad
			\deg_{G''_\r}(v,\I{i}{3-j}) \ge({1}/{6} +\beta/2)n \,.
		\end{equation*}
	\end{claim}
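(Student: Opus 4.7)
The plan is to assign each vertex $v \in V_0' \sm X^*$ a pair $(i_v, j_v) \in [2]^2$ recording where the bulk of its red and blue $G''$-neighbourhoods lie, and then declare $\I{i}{j}' := \{v \in V_0' \sm X^* : (i_v, j_v) = (i,j)\}$. Concretely, since $v \ne x_\r^*$, at least one of the sums $\deg_{G''_\r}(v, \I{1}{1} \cup \I{1}{2})$ and $\deg_{G''_\r}(v, \I{2}{1} \cup \I{2}{2})$ is smaller than $dn$; let $i_v \in [2]$ be the index of the side that is \emph{not} small. Define $j_v$ analogously from the condition $v \ne x_\b^*$ applied to the two blue components.

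The first task is to verify that $i_v$ (and, by symmetry, $j_v$) is well-defined, i.e., that at most one of the two red sides is small. Suppose for contradiction that both red sides of $v$ carry fewer than $dn$ edges. Using the minimum-degree bound $\deg_{G''}(v) \ge (2/3 + 3\beta/4)n$ (which holds for every $v \in V_0'$: directly from $\delta(G) \ge (2/3+\beta)n$ when $v \in V_0$, and from $\delta(G') \ge (2/3+\beta-2d-\eps)n$ when $v \in V_0' \sm V_0$), together with~\eqref{equ:size-V_0'}, this forces
\[
\deg_{G''_\b}(v, \I{1}{1} \cup \I{1}{2}) + \deg_{G''_\b}(v, \I{2}{1} \cup \I{2}{2}) \ge (2/3 + 2\beta/3)n.
\]
Since each of $\I{1}{k} \cup \I{2}{k}$ has at most $(1/2 + 10\sqrt\gamma)n$ vertices by~\eqref{equ:Iij<1/4}, both of the above blue sides must then exceed $dn$, which by the uniqueness clause in the definition of $x_\b^*$ forces $v = x_\b^* \in X^*$, contradicting $v \notin X^*$. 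A symmetric argument rules out both blue sides of $v$ being small.

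With $(i_v, j_v)$ defined, the claimed degree bounds follow from a counting argument. The only edges of $v$ in $G''$ that are neither red into $\I{i_v}{1}\cup\I{i_v}{2}$ nor blue into $\I{1}{j_v}\cup\I{2}{j_v}$ are the at most $2dn$ wrong-side monochromatic edges and the at most $5\sqrt\gamma n$ edges into $V_0'$; hence
\[
\deg_{G''_\r}(v, \I{i_v}{1} \cup \I{i_v}{2}) + \deg_{G''_\b}(v, \I{1}{j_v} \cup \I{2}{j_v}) \ge (2/3 + 2\beta/3)n.
\]
The contributions to this sum arising from vertices of $\I{i_v}{j_v}$ total at most $|\I{i_v}{j_v}| \le (1/4 + 5\sqrt\gamma)n$, so the remainder satisfies
\[
\deg_{G''_\r}(v, \I{i_v}{3-j_v}) + \deg_{G''_\b}(v, \I{3-i_v}{j_v}) \ge (5/12 + \beta/2)n.
\]
Since each summand is at most $(1/4 + 5\sqrt\gamma)n$, each is at least $(1/6 + \beta/2)n$, as required. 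The size bound $|\I{i}{j}'| \le 5\sqrt\gamma n$ is immediate from~\eqref{equ:size-V_0'}.

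The delicate point is constant management: the $\beta/2$ margin in the target $(1/6 + \beta/2)n$ must absorb the $2dn$ slack from the wrong-side edges, the $5\sqrt\gamma n$ slack from $|V_0'|$, and roughly $15\sqrt\gamma n$ further slack from the upper bounds $|\I{i}{j}| \le (1/4 + 5\sqrt\gamma)n$. The hierarchy $\eps,1/m \ll d \ll \gamma \ll \beta$ fixed at the start of the section is exactly calibrated to keep all these losses collectively below $\beta/10$, so the argument closes.
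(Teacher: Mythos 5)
Your proof is correct and rests on the same three ingredients as the paper's argument (the minimum degree of $G''$, the bounds $|\I{i}{j}|\le(1/4+5\sqrt{\gamma})n$, and the fact that $v\notin X^*$ prevents $v$ from sending $dn$ red edges into both red components or $dn$ blue edges into both blue components), but it routes through them in a different order. The paper first locates a corner $\I{i}{j}$ with $\deg_{G''}(v,\I{i}{j})<2dn$, deduces from the minimum degree that the two adjacent corners $\I{3-i}{j}$ and $\I{i}{3-j}$ each receive at least $(1/6+\beta/2)n$ edges, and only then invokes the no-bridge condition to sort those two neighbourhoods into one colour each. You instead first pin down, for each colour separately, the unique component side receiving at least $dn$ edges of that colour (existence via your contradiction argument, uniqueness because $v\ne x_\r^*,x_\b^*$), and then localise the degrees into the correct corners by a single counting step; the resulting partition is the same one the paper produces, so this is a legitimate, somewhat cleaner reorganisation rather than a new idea. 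Two small points. First, the displayed sum in your well-definedness argument groups the blue degree by the red-component sides $\I{1}{1}\cup\I{1}{2}$ and $\I{2}{1}\cup\I{2}{2}$, whereas the definition of $x_\b^*$ concerns the sides $\I{1}{k}\cup\I{2}{k}$; this is harmless since the total is the same and your size bound is applied to the correct halves, but the grouping should match the conclusion you draw. Second, your chain of intermediate constants lands at $(1/6+\beta/2-5\sqrt{\gamma})n$ rather than $(1/6+\beta/2)n$; as you anticipate, this is repaired by computing in one step from $\deg_{G''}(v)\ge(2/3+3\beta/4)n$ and absorbing the total loss $2dn+15\sqrt{\gamma}\,n\le\beta n/4$, so nothing is actually broken.
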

	\begin{claimproof}
		Let {$v\in V_0'\setminus X^*$}. Suppose that $v$ has at least $2dn$ neighbours in each $\I{i}{j}$ with $i,j\in[2]$, so that $v$ sends at least $dn$ edges in one or the other (or possibly both) colours to each $\I{i}{j}$. If there are three or more $\I{i}{j}$ such that $v$ has $dn$ or more red neighbours in each, then by definition we would have $v\in X^*$; the same holds for three or more $\I{i}{j}$ such that $v$ has $dn$ or more red neighbours in each. It follows that there are some two $\I{i}{j}$ to which $v$ sends less than $dn$ red edges, and because~$v$ is not in~$X^*$ these two sets come from a blue component, namely they are~$\I{1}{j}$ and~$\I{2}{j}$ for some~$j\in[2]$. Now~$v$ must send more than~$dn$ red edges to each of the sets~$\I{1}{3-j}$ and~$\I{2}{3-j}$. But these two sets are in different red components, so~$v\in X^*$. This contradiction shows that there are $i,j \in [2]$, such
		that $\deg(v,\I{i}{j}) < 2dn$.
		Moreover, all but {$|V_0'|\leq 5\sqrt{\gamma} n$} vertices are
		covered by $\bigcup_{i,j \in [2]} \I{i}{j}$. By~\eqref{equ:Iij<1/4}, this
		implies that $v$ has at least
		$$(2/3+\beta)n- 2(1/4+{5\sqrt{\gamma})n - 2dn - 5\sqrt{\gamma} n}\geq ( {1}/{6} +\beta  /2)n$$
		neighbours in each of $\I{3-i}{j}$ and $\I{i}{3-j}$. As $v\notin X^*$, we cannot have that $v$ sends $dn$ red edges to both $\I{i}{3-j}$ and $\I{3-i}{j}$, nor at least $dn$ blue edges to both. Thus all but at most $dn$ edges from $v$ to $\I{i}{3-j}$ are of one colour, and all but at most $dn$ edges from $v$ to $\I{3-i}{j}$ are of the other colour, and we can add $v$ either to $\I{i}{j}'$ or
		to $\I{3-i}{3-j}'$.
	\end{claimproof}

	Now set $\J{i}{j}=\I{i}{j}\cup\I{i}{j}'$ for $i,j\in[2]$. Note that by definition the sets $\J{i}{j}$, together with $X^*$, partition the vertices of $G$. Observe that
	by~\eqref{equ:Iij<1/4} and Claim~\ref{cla:I'} we have
	\begin{equation}\label{equ:Jij<1/4}
		(1/4-5\sqrt{\gamma})n\le |\J{i}{j}|=
		|\I{i}{j}|+|\I{i}{j}'|\le(1/4+5\sqrt{\gamma})n+ 5\sqrt{\gamma} n \le (1/4+10\sqrt{\gamma})n\,.
	\end{equation}

	Suppose $|\J{1}{1}|\ge|\J{2}{1}|$. Then either we find a partition of $V(G)$ into three cycles, or $G$ has one of two rather special structures. This claim, together with a short case distinction, is enough for Case 2: the claim (with sets $\J{i}{j}$ and colours changed appropriately) applies equally if $|\J{2}{1}|\ge|\J{2}{2}|$, and so on; and we will argue that in at least one of these four situations we do find the cycle partition. Let us now state precisely what structural information we obtain.
	\begin{claim}\label{cla:nopart-structure}
		Suppose $|\J{1}{1}|\ge|\J{2}{1}|$. Then one of the following three statements holds.
		\begin{enumerate}[label=(\roman*)]
			\item There are three cycles in $G$, two blue and one red, whose vertex sets partition $V(G)$.
			      \item\label{nopart-structure:ii} $x_\r^*$ exists and has less than $\tfrac12dn$ neighbours in $\J{1}{2}$, and $|\J{1}{1}|=|\J{2}{1}|$.
			      \item\label{nopart-structure:iii} $x_\r^*$ exists and has less than $\tfrac12dn$ neighbours in $\J{1}{2}$, and $|\J{1}{1}|=|\J{2}{1}|+1$, and either $x_\b^*$ does not exist or it exists and has less than $\tfrac12dn$ blue neighbours in $\J{2}{1}$.
		\end{enumerate}
	\end{claim}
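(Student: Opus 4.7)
The plan is to produce three pairwise disjoint cycles partitioning $V(G)$ for conclusion~(i): a red cycle $C_r \subseteq \R_1 = \J{1}{1} \cup \J{1}{2}$ (together with $x_\r^*$ when it exists), a blue cycle $C_{b_1} \subseteq \B_1$ absorbing all of $\J{2}{1}$, and a blue cycle $C_{b_2} \subseteq \B_2$ absorbing all of $\J{2}{2}$, with $x_\b^*$ placed into one of the two blue cycles. Writing $a_i = |V(C_r) \cap \J{1}{i}|$, each cycle is intended to arise as the closure through the exceptional vertex of a Hamilton $u,u'$-path delivered by Lemma~\ref{lem:bip-path}; in each application the side bearing the internal colour edges of Claim~\ref{cla:I-profiles} plays the role of the set $A$ there (namely $\J{1}{2}$ for $C_r$, $\J{1}{1}$ for $C_{b_1}$, and $\J{2}{2}$ for $C_{b_2}$).

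Setting $p := J_{1,1} - J_{2,1} \ge 0$ and $q := J_{1,2} - J_{2,2}$, the size hypotheses of Lemma~\ref{lem:bip-path} for the three cycles become $a_1 \in [\max(0,p-\mu n),\,p]$, $a_2 \in [\max(0,q),\,q+\mu n]$, and $a_1 \le a_2 \le a_1 + \mu n$, for a parameter $\mu$ of order $\sqrt{\gamma}$. By~\eqref{equ:Jij<1/4} we have $|p|, |q| = O(\sqrt{\gamma} n) \ll \mu n$, and these three intervals always admit an integer triple of values. To incorporate $x_\r^*$ I choose the endpoints $u, u'$ of the red Hamilton path in $\R_1$ to be red $G''$-neighbours of $x_\r^*$: the natural choice is to place both endpoints in $\J{1}{2}$, which Lemma~\ref{lem:bip-path} allows provided $a_2 \ge a_1 + 1$ and which works as long as $x_\r^*$ has at least $dn/2$ red neighbours in $\J{1}{2}$; failing that, $x_\r^*$ has at least $dn/2$ red neighbours in $\J{1}{1}$, and I place both endpoints there, which requires $a_1 \ge 2$ and hence $p \ge 2$. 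The accommodation of $x_\b^*$ in $C_{b_1}$ or $C_{b_2}$ is entirely analogous, using its blue neighbourhoods in $\J{1}{1}$ or $\J{2}{2}$ to place the two endpoints on the $A$-side of the corresponding blue cycle.

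The main obstacle is the regime $p \in \{0,1\}$ with $x_\r^*$ existing but having fewer than $dn/2$ red $G''$-neighbours in $\J{1}{2}$: both natural endpoint placements are blocked, so $C_r$ shrinks almost to the singleton $\{x_\r^*\}$, and any saving reconfiguration---shifting $C_r$ into $\R_2$, swapping the roles of the red and one blue cycle, or borrowing a $\J{1}{2}$-vertex to anchor $x_\r^*$---brings its own size and endpoint constraints. Tracing these alternatives one by one, the surviving obstruction when $p = 0$ is exactly the configuration of~(ii); when $p = 1$ it is exactly~(iii), the additional condition on $x_\b^*$ in~(iii) being precisely what blocks the one reconfiguration (using $\J{2}{1}$-endpoints for an insertion of $x_\b^*$) that would otherwise allow $C_r$ to absorb the missing $\J{1}{1}$-vertex. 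In every other case the three cycles can be built as described, yielding~(i).
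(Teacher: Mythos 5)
Your overall strategy is the same as the paper's: one short red cycle inside $\R_1$ that absorbs $x_\r^*$ and corrects the imbalance between $\J{1}{1}$ and $\J{2}{1}$, plus two large blue cycles obtained via Lemma~\ref{lem:bip-path} covering what remains of $\J{1}{1}\cup\J{2}{1}$ and of $\J{1}{2}\cup\J{2}{2}$ and absorbing $x_\b^*$, with (ii) and (iii) arising when the anchoring and balance constraints cannot all be met. The difficulty is that the decisive case analysis is asserted rather than carried out, and as you have set it up it does not deliver the claim as stated. Your dichotomy for $x_\r^*$ is ``at least $\tfrac12 dn$ \emph{red} neighbours in $\J{1}{2}$, else at least $\tfrac12 dn$ red neighbours in $\J{1}{1}$'', so the obstruction you would isolate is ``$x_\r^*$ has fewer than $\tfrac12 dn$ red neighbours in $\J{1}{2}$''. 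Conclusions (ii) and (iii), however, assert fewer than $\tfrac12 dn$ neighbours in $\J{1}{2}$ \emph{of either colour}, which is strictly stronger --- and this stronger form is exactly what the degree-sum contradictions in Cases~2.1 and~2.2 need later, so the weakening is not harmless. To obtain it you must also dispose of the case where $x_\r^*$ has few red but at least $\tfrac14 dn$ \emph{blue} neighbours in $\J{1}{2}$: there the correct move is to leave $x_\r^*$ out of the red cycle altogether and insert it into the blue cycle on $\J{1}{2}\cup\J{2}{2}$ (as an element of the set $A$ in the second application of Claim~\ref{cla:one-blue}), which works because the red cycle is built to leave slack $|\J{1}{2}\setminus V(C_\r)|\le|\J{2}{2}|-2$ and to spare at least half of the blue neighbourhood of $x_\r^*$ in $\J{1}{2}$. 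None of the reconfigurations you list (moving $C_\r$ into $\R_2$, swapping a red with a blue cycle, borrowing an anchor vertex) describes this insertion, and without it conclusion~(i) is not established in that case.

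A secondary issue: you propose to obtain the red cycle itself from Lemma~\ref{lem:bip-path} applied to the induced graph on its own (small) vertex set. The degree hypotheses of that lemma are stated relative to the order of the host graph, and an arbitrary subset of $\J{1}{1}\cup\J{1}{2}$ of size $O(\sqrt{\gamma}\,n)$ need not inherit them, so this application is not justified as written. The paper avoids this by constructing the short red cycle greedily through iterated common neighbours (Claim~\ref{cla:one-red}), which simultaneously lets one reserve the blue anchors of $x_\r^*$ and $x_\b^*$ needed afterwards. This is fixable, but it is a real step, not an immediate consequence of Lemma~\ref{lem:bip-path}.
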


	Before we prove this claim, we explain why it is enough to complete the proof of Case 2 of Lemma~\ref{lem:extremal-colouring}. To begin with, we may suppose without loss of generality that $\J{1}{1}$ is of maximum size among the $\J{i}{j}$. Then Claim~\ref{cla:nopart-structure} (as written above) applies. Either we are done, or $G$ has one of the structures described in~\ref{nopart-structure:ii} and~\ref{nopart-structure:iii} above. We distinguish these two separate cases. 
	Note that in both situations $x_\r^*$ exists and has less than $\tfrac12dn$ neighbours in $\J{1}{2}$. 
	We will only need to use the extra structure given by~\ref{nopart-structure:iii} at this point; within the following two cases, when we apply Claim~\ref{cla:nopart-structure} we will simply conclude that either we obtain a cycle partition, or the two sets differ in size by at most one and $x_\r^*$ exists and has less than $\tfrac12dn$ neighbours in $\J{1}{2}$.
	
	\medskip
	\textbf{Case 2.1}: Suppose $x_\r^*$ exists and has less than $\tfrac12dn$ neighbours in $\J{1}{2}$, and $|\J{1}{1}|=|\J{2}{1}|$. Because $\J{1}{1}$ is maximum size, we have $|\J{2}{1}|\ge|\J{2}{2}|$. This means we can again apply Claim~\ref{cla:nopart-structure}. Either we obtain the desired cycle partition, or $|\J{2}{1}|=|\J{2}{2}|$, or $|\J{2}{1}|=|\J{2}{2}|+1$. Either way, we have that $x_\b^*$ exists and has less than $\tfrac12dn$ neighbours in $\J{2}{2}$. If $|\J{2}{2}|\ge|\J{1}{2}|$ we can yet again apply Claim~\ref{cla:nopart-structure}, and it necessarily returns a partition into three cycles, since otherwise $x_\r^*$ must have less than $\tfrac12dn$ neighbours in both $\J{1}{2}$ and $\J{2}{1}$, which violates the minimum degree condition of $G$. We have $|\J{2}{2}|\ge|\J{1}{2}|$ except when $|\J{2}{1}|-1=|\J{2}{2}|=|\J{1}{2}|-1$, so it remains to consider this final possibility. In this case we have $|\J{1}{2}|\ge|\J{1}{1}|$ (in fact, with equality). That means we can yet again apply Claim~\ref{cla:nopart-structure}. Again it necessarily returns the desired partition into three cycles since otherwise we are told that $x_\b^*$ has less than $\tfrac12dn$ neighbours in $\J{2}{2}$. Since we already know it has less than $\tfrac12dn$ neighbours in $\J{1}{1}$ this violates the minimum degree condition of $G$.
	
\medskip
	\textbf{Case 2.2}: Suppose that $x_\r^*$ exists and has less than $\tfrac12dn$ neighbours in $\J{1}{2}$, and $|\J{1}{1}|=|\J{2}{1}|+1$, and either $x_\b^*$ does not exist or it does exist and has fewer than $\tfrac12dn$ blue neighbours in~$\J{2}{1}$. If~$x_\b^*$ exists, then it links~$B_1$ and $B_2$ in blue, so it must have at least $\tfrac12dn$ blue neighbours in $\J{1}{1}$. The point is that either $x_\b^*$ does not exist, or it does but has at least~$\tfrac12dn$ neighbours in~$\J{1}{1}$. This means that if $|\J{2}{1}|\ge|\J{2}{2}|$, then Claim~\ref{cla:nopart-structure} necessarily returns the desired three-cycle partition and we are done. Thus we have $|\J{2}{1}|<|\J{2}{2}|$, and this is only possible if $|\J{2}{2}|=|\J{1}{1}|$. So $\J{2}{2}$ is of maximum size, and we have $|\J{2}{2}|\ge|\J{1}{2}|$, which is what we need to apply Claim~\ref{cla:nopart-structure}. As in Case 1, we must obtain a three-cycle partition, since otherwise $x_\r^*$ violates the minimum degree condition on $G$.

	\medskip

	What remains to do is to show that Claim~\ref{cla:nopart-structure} holds. There are a few ways we might obtain the required cycle decomposition; in any case, the first thing we need to do is construct a suitable red cycle, and the following claim provides several possibilities for this.

	\begin{claim}\label{cla:one-red}
		The following three types of red cycle $C_\r$ are in $G$.
		\begin{itemize}
			\item $C_\r$ is contained in $\J{1}{2}$.
			\item if $x_\r^*$ has at least $\tfrac14dn$ red neighbours in $\J{1}{2}$, then $C_\r$ contains $x_\r^*$ and its remaining vertices are in $\J{1}{2}$.
			\item if $x_\r^*$ has at least $\tfrac14dn$ red neighbours in $\J{1}{1}$, then $C_\r$ contains $x_\r^*$, two vertices of $\J{1}{1}$, and its remaining vertices are in $\J{1}{2}$.
		\end{itemize}
		Furthermore, in each case we have $|V(C_\r)|\le 20\sqrt{\gamma} n$, $\big|\J{1}{2}\setminus V(C_\r)\big|\le |\J{2}{2}|-2$. Additionally, for each $c\in\{\r,\b\}$, if $x_c^*$ exists and has more than $100$ blue-neighbours in $\J{1}{2}$, then $V(C_\r)$ covers at most half of the blue-neighbours of $x_c^*$ in $\J{1}{2}$.
	\end{claim}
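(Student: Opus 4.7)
The plan is to construct all three red cycles inside (or essentially inside) the red-dominated block $\J{1}{2}$, exploiting the fact that the red subgraph there is very dense. Specifically, by Claim~\ref{cla:I-profiles} each $v\in\I{1}{2}$ satisfies $\deg_{G'_\r}(v,\I{1}{2})\ge(\tfrac16+\tfrac\beta2)n$, and by~\eqref{equ:Jij<1/4} we have $|\J{1}{2}|\le(\tfrac14+10\sqrt\gamma)n$, so any two vertices of $\I{1}{2}$ share at least $(\tfrac1{12}+\tfrac\beta2-10\sqrt\gamma)n=\Omega(n)$ common red neighbours inside $\I{1}{2}$. This gives a robust greedy extension: I can grow a red path in $\I{1}{2}$ of any prescribed length up to, say, $\tfrac1{30}n$, picking each new interior vertex from an $\Omega(n)$-sized common-neighbourhood pool and avoiding any pre-declared set of size $o(n)$.

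The target length $\ell=|V(C_\r)|$ must lie in an interval of the form $[\ell_{\min},20\sqrt\gamma n]$, where $\ell_{\min}\le|\J{1}{2}|-|\J{2}{2}|+O(1)\le 15\sqrt\gamma n+O(1)$ by~\eqref{equ:Iij<1/4} and~\eqref{equ:Jij<1/4}. Hence the interval is non-empty and comfortably inside the regime where the greedy extension works.

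I would then handle the three cases as follows. For type~1, pick any $v_1\in\I{1}{2}$ and greedily extend to a red cycle $v_1,\dots,v_\ell$ in $\I{1}{2}$, ensuring $v_\ell\in N_\r(v_1)$ at the final step by choosing $v_\ell$ from the $\Omega(n)$ common red neighbours of $v_1$ and $v_{\ell-1}$. For type~2, the vertex $x_\r^*$ has at least $\tfrac14dn-5\sqrt\gamma n$ red neighbours in $\I{1}{2}$, so I pick two of them as $v_1,v_{\ell-1}$ and greedily build a red path $v_1,\dots,v_{\ell-1}$ inside $\I{1}{2}$, closing the cycle via $x_\r^*$. For type~3, I pick $y_1,y_2\in\J{1}{1}\cap N_\r(x_\r^*)$, then use that each $y_s$ has $\ge(\tfrac16+\tfrac\beta2)n$ red neighbours in $\I{1}{2}$ (by Claim~\ref{cla:I-profiles} if $y_s\in\I{1}{1}$, and by Claim~\ref{cla:I'} if $y_s\in\I{1}{1}'$) to select $v_1\in N_\r(y_1)\cap\I{1}{2}$ and $v_k\in N_\r(y_2)\cap\I{1}{2}$, connect them by a red path inside $\I{1}{2}$ of the appropriate length, and close via $y_1,x_\r^*,y_2$.

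Finally, to secure the ``blue-neighbour'' clause, for each colour $c\in\{\r,\b\}$ with $x_c^*$ defined let $N_c$ be the blue neighbours of $x_c^*$ in $\J{1}{2}$. If $|N_c|\ge 40\sqrt\gamma n$ the conclusion $|V(C_\r)\cap N_c|\le|N_c|/2$ is automatic because $|V(C_\r)|\le 20\sqrt\gamma n$; otherwise I pre-select an arbitrary $\lceil|N_c|/2\rceil$-element subset of $N_c$ to forbid during the greedy construction. The total forbidden set then has size $<40\sqrt\gamma n$, still dwarfed by the $\Omega(n)$ common-red-neighbour pool available at each greedy step, so the construction survives. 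The main technical obstacle will be producing the cycle at an exact length in the narrow window $[\Omega(\sqrt\gamma n),\,20\sqrt\gamma n]$ with the prescribed endpoints and the avoidance constraints simultaneously; this is handled by the slack between the admissible length range and the available common-neighbour sets, which are orders of magnitude larger than the total forbidden set.
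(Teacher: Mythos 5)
Your proposal is correct and follows essentially the same route as the paper: build a short initial path through $x_\r^*$ (and, in the third case, two vertices of $\J{1}{1}$) with both ends in $\I{1}{2}$, then extend it greedily inside $\I{1}{2}$ to a prescribed length in the window $[\,|\J{1}{2}|-|\J{2}{2}|+O(1),\,20\sqrt{\gamma}n\,]$ using the $\Omega(n)$-sized common red neighbourhoods guaranteed by Claim~\ref{cla:I-profiles} and~\eqref{equ:Iij<1/4}, while forbidding a small pre-selected set. The only differences are cosmetic: you split the blue-neighbour clause at threshold $40\sqrt{\gamma}n$ rather than the paper's $n/1000$, and you protect half of $N_c$ outright rather than only in the intermediate regime, both of which work equally well.
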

	\begin{claimproof}
		For the first type of cycle, we pick an edge $uu'$ in $\I{1}{2}$ and let $P$ be the path $(u,u')$. For the second, we choose two distinct red neighbours $u,u'$ in $\I{1}{2}$ of $x_\r^*$ and let $P=(u,x_\r^*,u')$. For the third, we pick two distinct red neighbours $v,v'$ of $x_\r^*$ in $\I{1}{1}$, let $u$ be a red-neighbour of~$v$ in~$\I{1}{2}$, and $u'\neq u$ be a red-neighbour of $v'$ in~$\I{1}{2}$, and set $P=(u,v,x_\r^*,v',u')$. In each case, we have a path $P$ on at most five vertices whose ends are in $\I{1}{2}$.

		We now extend $P$ to the desired cycle $C_\r$ as follows. We first let $S^*$ be a vertex set containing~$V(P)$, together with some other vertices as follows. For each $c\in\{\r,\b\}$, if~$x_c^*$ has between~$100$ and $\tfrac{n}{1000}$ blue-neighbours in $\J{1}{2}$, we add an arbitrary half of them, not contained in $P$, to $S^*$; this is possible since $P$ contains at most five vertices. In total we have $|S^*|\le2\tfrac{n}{2000}+9<\tfrac{n}{100}$. We let $\ell=\max\big(2,2+|\J{1}{2}|-|\J{2}{2}|\big)$. By
		Claim~\ref{cla:I-profiles} we have
		$\deg_{G''_\r}(u,\I{1}{2}),\deg_{G''_\r}(u,\I{1}{2})\ge(1/6+\beta/2)n$.
		Since $|\I{1}{2}|=(1/4\pm 5\sqrt{\gamma})n$ by~\eqref{equ:Iij<1/4}, this implies
		that~$u$ and~$u'$ have a common neighbour~$u_1$ in $\I{1}{2}\setminus( S^*\cup\{u,u'\}$, hence
		we get a path $u,u_1,u'$ of length $\ell=2$. We can then repeat this
		argument to get a common neighbour $u_2$ of~$u_1$ and~$u'$ in
		$\I{1}{2}\setminus\big(S^*\cup\{u\}\big)$ and hence a path $u,u_1,u_2,u'$ of length
		$\ell=3$, and so on. In general we thus obtain a path
		$u,u_1,\dots,u_{\ell-1},u'$ of length~$\ell$ by extending the path of length
		$\ell-1$ by finding a common neighbour~$u_{\ell-1}$ of~$u_{\ell-2}$ and~$u'$
		in $\I{1}{2}\setminus\big(S^*\cup\{u,u_1,\dots,u_{\ell-3}\}\big)$, which we
		certainly can do since by~\eqref{equ:Jij<1/4} the difference in size between $\J{1}{2}$ and $\J{2}{2}$ is at most {$15\sqrt{\gamma} n$, so $\ell\le {17\sqrt{\gamma} n}$}.

		We claim this $C_\r$ has all the required properties. What remains to check is that $|V(C_\r)|\le {20\sqrt{\gamma} n}$, that $\big|\J{1}{2}\setminus V(C_\r)\big|\le|\J{2}{2}|-2$, and that it does not cover too many blue-neighbours of the vertices of $X^*$. The first two of these follow by definition of $\ell$. So we need to check $C_\r$ does not cover too many blue-neighbours of the vertices of $X^*$. If $x_c^*\in X^*$ has less than $100$ blue-neighbours in $\J{1}{2}$, there is nothing to check. If $x_c^*$ has between $100$ and $\tfrac{n}{1000}$ blue-neighbours in $\J{1}{2}$, then at least half are in $S^*\setminus V(P)$ and hence not covered by $C_\r$ as desired. If $x_c^*$ has more than $\tfrac{n}{1000}$ blue-neighbours in $\J{1}{2}$, then at most $|V(C_\r)|\le {20\sqrt{\gamma} n}$ are covered by $C_\r$, which is less than half as desired.
	\end{claimproof}

	Next, we need to construct the two blue cycles covering the vertices left over after $C_\r$ is constructed. We need one cycle contained in $\J{1}{1}\cup\J{2}{1}\cup X^*$ and another in $\J{1}{2} \cup \J{2}{2}\cup X^*$. These two sets satisfy the same conditions, and the following lemma applies to both (though we state it for $\J{1}{1}\cup\J{2}{1}\cup X^*$). Since at this point $C_\r$ is not fixed (Claim~\ref{cla:one-red} above gives several possible constructions) we simply avoid a vertex set $S$ of size at most ${20\sqrt{\gamma} n}$.

	\begin{claim}\label{cla:one-blue}
		Let $S$ be a set of at most ${20\sqrt{\gamma} n}$ vertices. Choose any (possibly empty) disjoint subsets $A,B\subseteq X^*\setminus S$ with the following property: $x_c^*$ may only be in $A$ if it has at least $\tfrac18dn$ blue-neighbours in $\J{2}{1}\setminus S$, and $x_c^*$ may only be in $B$ if it has at least $\tfrac18dn$ blue-neighbours in $\J{1}{1}\setminus S$. Suppose that
		\[\big|A\cup\J{1}{1}\setminus S\big|\ge\big|B\cup\J{2}{1}\setminus S\big|\,.\]
		Then there is a blue cycle in $G$ whose vertex set is $\big(A\cup B\cup\J{1}{1}\cup\J{2}{1}\big)\setminus S$.
	\end{claim}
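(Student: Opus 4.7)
The strategy is to find a blue Hamilton cycle in $H^*:=G_\b[V^*]$ where $V^*:=(A\cup B\cup\J{1}{1}\cup\J{2}{1})\setminus S$, reducing to Lemma~\ref{lem:bip-path} with bipartition $(\J{1}{1}\setminus S,\J{2}{1}\setminus S)$. By Claim~\ref{cla:I-profiles} (and Claim~\ref{cla:I'} for the small exceptional sets $\I{1}{1}'\cup\I{2}{1}'$), vertices in $\I{1}{1}$ (resp.\ $\I{2}{1}$) have at least $(1/6+\beta/2)n$ blue neighbours in $\I{2}{1}$ (resp.\ $\I{1}{1}$), and $\I{1}{1}$ in addition has within-set blue degree $\ge(1/6+\beta/2)n$; these bounds are ample for conditions (i)--(iii) of Lemma~\ref{lem:bip-path} with, say, $\mu=30\sqrt{\gamma}$, placing $\I{1}{1}'\setminus S$ inside the ``$A'$''. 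Moreover, by~\eqref{equ:Jij<1/4} and the hypothesis $|A\cup\J{1}{1}\setminus S|\ge|B\cup\J{2}{1}\setminus S|$ (combined with $|A|+|B|\le 2$), the two sides differ by at most $20\sqrt{\gamma}n+O(1)$.

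The real difficulty is that each $v\in A\cup B$ is only guaranteed to have $\tfrac18 dn$ blue neighbours on its target side, far below the $(1/4+3\mu)n_0\approx n/8$ threshold of Lemma~\ref{lem:bip-path}. I handle this by first absorbing each such $v$ into a short blue path. For $v\in A$, using $\tfrac18 dn\gg |\I{2}{1}'|+|S|$, pick two distinct blue neighbours $u_1(v),u_2(v)\in\I{2}{1}\setminus S$ and set $P_v:=u_1(v)\,v\,u_2(v)$; analogously, for $v\in B$ set $P_v:=w_1(v)\,v\,w_2(v)$ with $w_i(v)\in\I{1}{1}\setminus S$, choosing all endpoints distinct across different $v$'s.

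Form the auxiliary graph $\widetilde H$ on $(\J{1}{1}\cup\J{2}{1})\setminus S$ by adding to $G_\b$ a ``pseudo-edge'' $u_1(v)u_2(v)$ for each $v\in A$ (lying within $\J{2}{1}$) and $w_1(v)w_2(v)$ for each $v\in B$ (within $\J{1}{1}$). Any Hamilton cycle in $\widetilde H$ that traverses every pseudo-edge lifts, by replacing each pseudo-edge with the corresponding path $P_v$, to a blue Hamilton cycle on $V^*$. Contract each pseudo-edge to a single ``merged'' vertex, producing $\widetilde H^c$ with side-sizes $|\J{1}{1}\setminus S|-|B|\ge|\J{2}{1}\setminus S|-|A|$ of the required balance; the merged vertices inherit the large cross-degrees of their collapsed pair, so the hypotheses of Lemma~\ref{lem:bip-path} remain intact (with the merged $w^*_v$ absorbed into $A'$ if necessary). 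Finally pick a blue edge $uu'$ in $\widetilde H^c$ with $u\in\I{1}{1}\setminus S$ and $u'\in\I{2}{1}\setminus S$ (abundant by Claim~\ref{cla:I-profiles}), apply Lemma~\ref{lem:bip-path} for a Hamilton $u,u'$-path in $\widetilde H^c$, close via $uu'$ into a cycle, and un-contract to recover the desired blue Hamilton cycle in $H^*$.

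The main obstacle is ensuring the un-contraction at each merged vertex $u^*_v$ is consistent: given cycle-neighbours $x_1,x_2$ of $u^*_v$ in $\widetilde H^c$, we must be able to assign them to $u_1(v)$ and $u_2(v)$ so that both $x_1u_i$ and $u_{3-i}x_2$ are blue edges of $G$ for some $i\in\{1,2\}$. Here the point is that by inclusion-exclusion $u_1(v)$ and $u_2(v)$ share at least $2(1/6+\beta/2)n-|\I{1}{1}|\ge(1/12+\beta)n$ common blue neighbours in $\I{1}{1}$; so if the Hamilton path output by Lemma~\ref{lem:bip-path} happens to give an inconsistent assignment at $u^*_v$, a short Pósa-style rotation through such a common neighbour reroutes the cycle and repairs the assignment, and symmetrically for $w^*_v$.
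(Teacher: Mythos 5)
Your overall strategy --- absorb each vertex of $A\cup B$ into a short blue path with both ends in the appropriate one of $\I{1}{1}\setminus S$, $\I{2}{1}\setminus S$, and then invoke Lemma~\ref{lem:bip-path} on the bipartition $(\J{1}{1}\setminus S,\J{2}{1}\setminus S)$ --- is the same as the paper's, and your bookkeeping (which side must play the larger ``$A$''-role in Lemma~\ref{lem:bip-path} because only $\J{1}{1}$ has internal blue edges, where $\I{1}{1}'$ and the merged vertices go, the balance count $|\J{1}{1}\setminus S|-|B|\ge|\J{2}{1}\setminus S|-|A|$) is essentially right. The divergence is in how the short absorbing paths are stitched into the long one, and that is where there is a genuine gap. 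Contracting the pseudo-edge $u_1(v)u_2(v)$ produces a vertex whose neighbourhood is $N_\b(u_1(v))\cup N_\b(u_2(v))$, and Lemma~\ref{lem:bip-path} gives no control over which constituent the two cycle-edges at the merged vertex attach to; the bad event that both attach to $u_1(v)$ and neither to $u_2(v)$ is entirely possible. You cannot dodge this by contracting onto the common neighbourhood instead, since $|N_\b(u_1(v))\cap N_\b(u_2(v))\cap\J{1}{1}|$ is only guaranteed to be about $n/12$, below the lemma's threshold of $(1/4+3\mu)$ times the order of the graph it is applied to, i.e.\ roughly $n/8$ here. So the repair step is load-bearing, and as written it is only an assertion: a rotation that reroutes the cycle through a common neighbour $w$ of $u_1(v),u_2(v)$ reverses a segment and needs an extra blue edge to close it up (for instance, a displaced cycle-neighbour of $w$ must be adjacent to a displaced cycle-neighbour of the merged vertex), and you must also check that the rotation does not destroy consistency at the other merged vertex. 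None of this is carried out.

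The gap is avoidable, and the paper's proof shows how: instead of keeping the $|A\cup B|\le 2$ absorbing paths separate and contracting, it links them (via common blue neighbours supplied by Claim~\ref{cla:I-profiles}) into a \emph{single} blue path $P$ that uses equally many vertices from $\J{1}{1}\cup A$ and $\J{2}{1}\cup B$ and whose two ends $u\in\I{1}{1}\setminus S$ and $u'\in\I{2}{1}\setminus S$ lie on opposite sides; one application of Lemma~\ref{lem:bip-path} with the \emph{prescribed} endpoints $u,u'$ then covers everything else, and concatenating with $P$ closes the cycle with no un-contraction to verify. I would rewrite your final step along these lines rather than trying to make the rotation argument rigorous.
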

	\begin{claimproof}
		We begin by constructing disjoint blue paths outside $S$ greedily as follows. Given a vertex~$x$ of $A$, choose two blue-neighbours in $\I{2}{1}\setminus S$, giving a three-vertex blue path containing $x$ with ends in $\I{2}{1}\setminus S$; extend this to a five-vertex blue path with ends in $\I{1}{1}\setminus S$ by choosing blue neighbours in $\I{1}{1}\setminus S$. This is possible by the condition on $A$, since $|\J{i}{j}|\le|\I{i}{j}|+ {5\sqrt{\gamma} n}$, and by Claim~\ref{cla:I-profiles}. Given a vertex $x$ of $B$, we create a three-vertex blue path containing $x$ with ends in $\I{1}{1}\setminus S$ by simply choosing two blue-neighbours of $x$ in $\I{1}{1}\setminus S$. Observe that at each step we have more than~$10$ choices for the next vertex to pick; since $|A\cup B|\le 2$ the paths we construct have in total at most $10$ vertices, so we can perform the construction vertex-disjointly.

		At this point we have $|A\cup B|$ disjoint blue paths with ends in $\I{1}{1}\setminus S$. If this is zero paths, choose a blue edge $uu'$ from $\I{1}{1}\setminus S$ to $\I{2}{1}\setminus S$ and let $P=(u,u')$. If it is one path, let $u$ be one end and let $u'$ be a blue-neighbour in $\I{2}{1}\setminus S$ not used on the path: this gives a blue path~$P$ from $u$ to $u'$. If it is two paths, suppose they go from $u$ to $v$ and $v'$ to $v''$. Choose a common blue-neighbour~$w$ of~$v$ and~$v'$ in~$\I{2}{1}\setminus S$, and a different blue neighbour~$u'$ of~$v''$ in $\I{2}{1}\setminus S$, which are not used on the paths: this gives a blue path $P$ from $u$ to $u'$. Each of these constructions is possible by Claim~\ref{cla:I-profiles}. In any case, $P$ uses the same number of vertices in $\J{1}{1}\cup A$ as in $\J{2}{1}\cup B$.

		We now apply Lemma~\ref{lem:bip-path} to find a blue path from $u$ to $u'$ covering what remains of $\J{1}{1}\setminus S$ and $\J{2}{1}\setminus S$. Specifically, the two sets we need to find a Hamilton path in are $\big(\J{1}{1}\setminus (S\cup V(P))\big)\cup\{u\}$ and $\big(\J{2}{1}\setminus (S\cup V(P))\big)\cup\{u'\}$, with $I'_{1,1}\setminus (S\cup V(P))$ the vertices which do not necessarily have many blue neighbours in $\J{1}{1}$. The required degree conditions hold by Claims~\ref{cla:I-profiles} and~\ref{cla:I'}, and since $S$ is small. By assumption $V(P)$ covers an equal number of vertices in $A\cup \J{1}{1}\setminus S$ and $B\cup\J{2}{1}\setminus S$, so that
		\[\big|\big(\J{1}{1}\setminus S\cup V(P)\big)\cup\{u\}\big|\ge \big|\big(\J{2}{1}\setminus S\cup V(P)\big)\cup\{u'\}\big|\] as required.
	\end{claimproof}

	We are now in a position to prove Claim~\ref{cla:nopart-structure}, which we do by considering a few cases depending on which exist of, and the coloured neighbourhoods of, $x_\r^*$ and $x_\b^*$.
	\begin{claimproof}[Proof of Claim~\ref{cla:nopart-structure}]
		To begin with, suppose that $x_\r^*$ does not exist. We apply Claim~\ref{cla:one-red} to find a red cycle $C_\r$ contained in $\J{1}{2}$ satisfying the properties listed there. We apply Claim~\ref{cla:one-blue}, with $S=A=B=\emptyset$, to find a blue cycle on vertices $\J{1}{1}\cup\J{2}{1}$. We now want to apply Claim~\ref{cla:one-blue} a second time, with $S=V(C_\r)$, to find a cycle on vertices $X^*\cup\J{2}{2}\cup\J{1}{2}\setminus S$. To do this, we assign $x_\b^*$, if it exists, to one of $A$ and $B$ such that the condition of Claim~\ref{cla:one-blue} is satisfied. This is possible for the following reason. If $x_\b^*$ exists, it links the blue components and in particular has at least $dn$ blue neighbours in $\J{1}{2}\cup\J{2}{2}$. So it has at least $\tfrac12dn$ blue-neighbours in one of these sets, and so (by Claim~\ref{cla:one-red}) in must have at least $\tfrac14dn$ blue-neighbours in one of $\J{1}{2}\setminus S$ and~$\J{2}{2}$. These three cycles form the desired partition.

		A very similar argument handles the case that $x_\r^*$ exists and has $\tfrac14dn$ red neighbours in $\J{1}{2}$; under this condition Claim~\ref{cla:one-red} allows us to choose $C_\r$ contained in $\J{1}{2}\cup\{x_\r^*\}$ covering $x_\r^*$, and we then obtain the remaining two blue cycles as above verbatim.

		Again, if $x_\r^*$ exists and has $\tfrac14dn$ blue neighbours in $\J{1}{2}$, we can perform a similar argument to the first. We obtain $C_\r$ contained in $\J{1}{2}$ and a blue cycle on vertices $\J{1}{1}\cup\J{2}{1}$ exactly as there; we obtain a blue cycle on vertices $X^*\cup\J{2}{2}\cup\J{1}{2}\setminus V(C_\r)$ by following the same argument as above, except that we add $x_\r^*$ to $A$.

		If $x_\r^*$ exists and has $\tfrac12dn$ neighbours in $\J{1}{2}$, at least half are red or half are blue and so one of these two cases occur.

		We are left to consider the case that $x_\r^*$ exists but has fewer than $\tfrac2dn$ neighbours in $\J{1}{2}$. Since $x_\r^*$ by definition has at least $dn$ red neighbours in $\J{1}{1}\cup\J{1}{2}$, in this case $x_\r^*$ must have at least $\tfrac12dn$ red neighbours in $\J{1}{1}$.

		Suppose first that $|\J{1}{1}|\ge|\J{2}{1}|+2$. We apply Claim~\ref{cla:one-red} to find a red cycle $C_\r$ whose vertices are in $\J{1}{2}$, apart from two in $\J{1}{1}$ and $x_\r^*$, satisfying the properties listed there. We apply Claim~\ref{cla:one-blue}, with $S=V(C_\r)$ and $A=B=\emptyset$, to find a blue cycle on vertices $\J{1}{1}\cup\J{2}{1}\setminus V(C_\r)$. We now want to apply Claim~\ref{cla:one-blue} a second time, with $S=V(C_\r)$, to find a cycle on vertices $(X^*\setminus\{x_\r^*\})\cup\J{2}{2}\cup\J{1}{2}\setminus S$. To do this, we assign $x_\b^*$, if it exists, to one of $A$ and $B$ such that the condition of Claim~\ref{cla:one-blue} is satisfied; this is possible for the same reason as in the first case above. These three cycles form the desired partition.

		If $|\J{1}{1}|=|\J{2}{1}|$, then we have the structure described in~\ref{nopart-structure:ii}.

		Finally, suppose $|\J{1}{1}|=|\J{2}{1}|+1$. If $x_\b^*$ does not exist, or it exists and has less than $\tfrac12dn$ blue neighbours in $\J{2}{1}$, we have the structure of~\ref{nopart-structure:iii}. So we only need to show that if $x_\b^*$ exists and has at least $\tfrac12dn$ blue neighbours then we obtain the desired three-cycle partition. We do this as follows. We first use Claim~\ref{cla:one-red} to find a red cycle $C_\r$ which contains $x_\r^*$, two vertices of $\J{1}{1}$, and vertices of $\J{1}{2}$ such that $|\J{1}{2}\setminus V(C_\r)|\le|\J{2}{2}|-2$. We now apply Claim~\ref{cla:one-blue}, with $A=B=\emptyset$, to find a blue cycle covering $(\J{1}{2}\cup\J{2}{2})\setminus V(C_\r)$. We apply Claim~\ref{cla:one-blue} again, with $A=\{x_\b^*\}$ and $B=\emptyset$, to find a blue cycle covering $(A\cup\J{1}{1}\cup\J{2}{1})\setminus V(C_\r)$; this second application is possible since $|A\cup \J{1}{1}\setminus V(C_\r)|=|\J{1}{1}|-2+1=|\J{2}{1}|$. This gives the desired partition into three cycles.
	\end{claimproof}
	As explained above, Claim~\ref{cla:nopart-structure} completes the proof.
\end{proof}


\section{Concluding Remarks}\label{sec:remarks}

There are several natural questions arising from our result.
First of all, we believe that the error term $\beta$ in Theorem~\ref{thm:main} can be removed, as conjectured by Pokrovskiy~\cite{Pok16}.
However, this will likely require a careful and technically difficult analysis of the underlying extremal cases.

One might also ask how many monochromatic cycles are necessary to partition a graph with minimum degree below $2n/3$.
Kor\'andi, Lang, Letzter and Pokrovskiy~\cite{KLLP20} showed that $r$-edge-coloured graphs of minimum degree $n/2 + \Omega(\log n)$ can be covered with at most $O(r^2)$ disjoint monochromatic cycles.
This is essentially tight in terms of the degree condition, as with a minimum degree bound below  $n/2 + O(\log n / \log \log n)$ there are colourings where the number of cycles can no longer be bounded in terms of $r$.
On the other hand, it is not known up to which minimum degree one can always partition into four monochromatic cycles.
\begin{problem}
Determine the smallest $\delta > 0$ for which every $2$-edge-coloured graph $G$ on $n$ vertices and $\delta(G) \geq \delta n$ can be partitioned into four monochromatic cycles.
\end{problem}

An old theorem of Ore~\cite{Ore60} states that every graph on $n$ vertices contains a Hamilton cycle, provided the graph satisfies $\deg(x) + \deg(y) \geq n$ for every two non-adjacent vertices~$x$ and $y$.
Bar\'{a}t and S\'{a}rk\"{o}zy~\cite{BS16} asked whether something similar is true for monochromatic cycle partitions.
They conjectured that any red and blue edge-coloured graph $G$  on $n$ vertices can be partitioned into a red and a blue monochromatic cycle, provided that  $\deg(x) + \deg(y) > 3n/2$ for every two non-adjacent vertices $x$ and $y$.
In support of their conjecture, they proved that under the stronger assumption $\deg(x) + \deg(y) > (3/2 + o(1))n$ almost all vertices can be covered.
We believe that our techniques are suitable to provide a covering of the remaining vertices.
\begin{problem}
For $\beta >0$ and sufficiently large $n$, prove that any red and blue edge-coloured graph $G$ on $n$ vertices can be partitioned into a red and a blue monochromatic cycle, provided that  $\deg(x) + \deg(y) > (3/2 + \beta)n$ for every two non-adjacent vertices $x$ and $y$.
\end{problem}

Very recently, Arras~\cite{Arras} considered an approximate analogue of this problem for
three cycles: He proved that if $\deg(x) + \deg(y)>4/3n+o(n)$ for each pair of
non-adjacent vertices $x$ and $y$, then all but $o(n)$ vertices can be
partitioned into three monochromatic cycles.
Note that this is an (approximate) generalization of our main result.

It would also be interesting to determine what happens if we have more than 2 colours. That is, we would like to know which minimum degree conditions guarantee a partition of an $r$-edge-coloured graph into $f(r)$ monochromatic cycles, where $f(r)$ is some function.
In this direction, Kor\'andi, Lang, Letzter and Pokrovskiy~\cite{KLLP20} showed for any fixed $\delta>1/2$ and growing $r$, there are families of $n$-vertex graphs of minimum degree $\delta n$ that cannot be partitioned into fewer than~$\Omega(r^2)$ monochromatic  cycles.
(In contrast to this, the complete graph can be partitioned into $O(r \log r)$ cycles~\cite{GRSS06}.)
However, it is not clear what happens for smaller $r$.
For instance, for $r=3$  it might be possible to show an analogue of our main theorem with one more colour and one more cycle.
\begin{conjecture}\label{conj:Ore}
	Any $3$-edge-coloured graph on $n$ vertices and of minimum degree at least $3n/4$ can be partitioned into $4$ cycles.
\end{conjecture}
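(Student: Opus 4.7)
The plan is to adapt the proof scheme of Theorem~\ref{thm:main} to three colours. Applying the regularity lemma produces a reduced multi-graph $\RdG$ on $m$ vertices (with up to three parallel edges between any pair, one per colour) of minimum degree at least $(3/4 - o(1))m$. Our goal is to find four monochromatic components $C_1, C_2, C_3, C_4 \subset \RdG$ whose union $C$ spans $\RdG$, is connected, satisfies a $\gamma$-robust Tutte condition in the sense of Definition~\ref{def:robustMatching}, and overlaps enough that the exceptional vertices can be distributed among them using a variant of Lemma~\ref{lem:distribute-exceptional-vertices}. Once such a structural lemma is established, the rest of the proof follows the pattern of Theorem~\ref{thm:main}: fix a perfect $2$-matching of $C$ via Theorem~\ref{thm:tutte-2-matching}, build monochromatic cycles that absorb the exceptional set using bridges between pairs of components, balance the cluster sizes through Lemma~\ref{lem:balancing}, and complete the partition by applying Lemma~\ref{lem:blowup} to each matching edge. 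As with Theorem~\ref{thm:main}, the regularity-based approach would naturally yield the approximate version of the conjecture, namely with minimum degree $(3/4 + \beta)n$.

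For the component-finding step, one starts from the observation that in an $r$-coloured graph of minimum degree $(1 - 1/(r+1) + \gamma)n$ a degree-counting argument generalising Claim~\ref{cla:cover-by-C1-C2} produces at most $r$ monochromatic components whose union spans the graph. For $r = 3$ this yields three components $D_1, D_2, D_3$ covering $\RdG$, and the structural lemma then asks us to adjoin a fourth (possibly empty) monochromatic $C_4$ to the union and verify that the resulting $C$ is $\gamma$-robust Tutte. The case analysis would split according to how many of the $D_i$ are themselves spanning (analogously to Subsection~\ref{cas:C1-spanning}) and to how the components intersect pairwise. Whenever the robust Tutte property fails, we expect to identify the colouring as close to a configuration mirroring Pokrovskiy's lower-bound constructions for four cycles, taking the form of a balanced partition of $V(\RdG)$ into roughly four equal parts with a rigid cross-cell colour pattern. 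Such extremal instances would be treated by bipartite Hamilton-type arguments in the spirit of Lemma~\ref{lem:bip-path} and Section~\ref{sec:extremal-colourings}.

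The hardest step is this structural component lemma. In the two-colour setting, the contracting-set argument at minimum degree $2n/3$ bounds by two the number of pairwise-disjoint monochromatic components meeting any stable set, which keeps the case distinctions in Section~\ref{sec:component-lemma} manageable. For three colours at minimum degree $3n/4$ the corresponding bound jumps to three, so one must simultaneously analyse arrangements with three or four large components per colour, disentangle multiple bipartite configurations within each colour class, and enumerate the associated extremal colourings, which will plausibly be more numerous and more intricate than the two configurations of Definition~\ref{def:extremal-colouring}. Designing this taxonomy and verifying that no colouring escapes both the robust-Tutte conclusion and all listed extremal cases is where the bulk of the work would lie. The balancing, exceptional-vertex distribution and blow-up steps should then go through with only minor book-keeping modifications to accommodate four target components and three bridge colours in place of three and two.
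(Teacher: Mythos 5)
This statement is an open conjecture in the paper (stated in Section~\ref{sec:remarks}), not a theorem; the authors give a construction showing the bound $3n/4$ would be tight but offer no proof, so there is nothing to compare your argument against. More importantly, what you have written is not a proof but a research programme, and its central step is missing. The analogue of Lemma~\ref{lem:components} for three colours --- finding four monochromatic components whose union is spanning, connected, $\gamma$-robust Tutte, and suitably overlapping, together with a complete classification of the extremal colourings that obstruct this --- is precisely the content you defer with phrases such as ``we expect to identify'' and ``would plausibly be more numerous.'' You yourself concede that ``designing this taxonomy and verifying that no colouring escapes'' is where the bulk of the work lies; until that case analysis is actually carried out (and each extremal configuration is handled by a separate argument in the spirit of Section~\ref{sec:extremal-colourings}), no part of the conjecture is established. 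Section~\ref{sec:component-lemma} of the paper already runs to several pages of delicate contracting-set arguments for two colours, and there is no a priori guarantee that the three-colour analogue even holds in the form you posit.

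There is a second, unavoidable gap: the conjecture asserts the exact threshold $3n/4$, while the regularity-plus-blow-up machinery you propose inherently costs an additive $\beta n$ in the degree hypothesis, as you note. The authors make the same point about their own Theorem~\ref{thm:main}, remarking that removing the error term $\beta$ ``will likely require a careful and technically difficult analysis of the underlying extremal cases'' beyond what the regularity method provides. So even a complete execution of your plan would prove only an approximate version of Conjecture~\ref{conj:Ore}, not the statement itself.
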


This would be tight, as the following easy example shows. For $m\in \mathbb N$, consider the graph on sets $A$, $B$, $C$, $D$, with $|A|=|C|=m+2$, $|B|=m$, $|D|=m+1$, having all edges except the ones inside $B$, inside $D$, and between $A$ and $C$. Colour all edges inside $A$, inside $C$, and between $B$ and $D$ red, colour all edges between $A$ and $B$, and between $C$ and $D$ green, and colour all edges between $A$ and $D$, and between $B$ and $C$ blue. This graph has $n=4m+5$ vertices, minimum degree  $3m+2=\lfloor3n/4\rfloor-1$, and cannot be partitioned into  less than four monochromatic cycles.

P\'osa proved that a graph $G$ with degree sequence $d_1 \leq \dots \leq d_n$ and $d_i > i$ for every $i < n/2$ contains Hamilton cycle.
We believe that the following analogue of this result for monochromatic cycle partitions is true. 
Very recently, Arras~\cite{Arras} obtained an approximate solution for this in the case $r=2$.

\begin{conjecture}
	There is a function $f$ with the following property.
	For $\beta > 0$, $r \geq2$ and $n$ sufficiently large, let  $G$ be a graph with degree sequence $d_1 \leq \dots \leq d_n$ such that $d_i \geq i + \beta n$ for every $i\leq n/2$.
	Then every $r$-edge-colouring of $G$ admits a partition into at most $f(r)$ monochromatic cycles.
\end{conjecture}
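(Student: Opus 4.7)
\bigskip

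\noindent\textbf{Proposed plan.} The natural strategy is to combine a colour-aware absorption of the low-degree vertices with the minimum-degree partitioning theorem of Kor\'andi, Lang, Letzter and Pokrovskiy~\cite{KLLP20}, which guarantees $O(r^2)$ monochromatic cycles once $\delta(G) \geq n/2 + \Omega(\log n)$. Under the P\'osa-type hypothesis, let $L$ be the set of vertices with degree below $(\tfrac12 + \tfrac{\beta}{2})n$. The assumption $d_i \geq i + \beta n$ for $i \leq n/2$ forces $|L| \leq n/2$, while every $v \in L$ still has $\deg(v) \geq \beta n$, so by pigeonhole $v$ has a dominant colour $c(v) \in [r]$ in which it sends at least $\beta n / r$ edges. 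Partition $L = L_1 \cup \cdots \cup L_r$ according to dominant colour.

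The heart of the plan is to cover each $L_c$ by a bounded number (depending only on $r$) of monochromatic $c$-coloured paths whose internal vertices all lie in the ``high-degree'' set $V(G) \setminus L$. One approach is to form an auxiliary graph $J_c$ on vertex set $L_c$ whose edges record pairs $uv$ possessing many common $c$-neighbours in $V(G) \setminus L$, so that such pairs can be stitched together by short monochromatic $c$-paths through the high-degree part; the goal is then to cover $V(J_c)$ by a bounded number of Hamiltonian paths of $J_c$, which when lifted back to $G$ yield a bounded number of monochromatic $c$-paths through $L_c$. A Ramsey/Hajnal--Szemer\'edi-style argument, combined with P\'osa rotation-extension applied within each colour, should achieve this.

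Once $L$ has been absorbed into $t = O_r(1)$ monochromatic paths $P_1,\ldots,P_t$, the residual graph $G' := G - \bigcup_i \big(V(P_i) \setminus \{\text{endpoints of } P_i\}\big)$ has minimum degree at least $|V(G')|/2 + \Omega(\beta |V(G')|)$, comfortably satisfying the hypothesis of~\cite{KLLP20}. Applying that theorem yields a partition of $V(G')$ into $O(r^2)$ monochromatic cycles, and a standard connection step in the spirit of Lemma~\ref{lem:connecting-path} glues each $P_i$ into a cycle of its own colour, producing a partition of $V(G)$ into $f(r) = O(r^2) + t$ monochromatic cycles.

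The main obstacle is controlling the number of paths needed to cover each $L_c$ by a function of $r$ alone, rather than something growing with $n$. A naive greedy construction could yield a cover whose size depends on $|L_c|$, which may be as large as $n/2$; the subtle issue is that two low-degree vertices can have essentially disjoint $c$-neighbourhoods, so they cannot always be joined by a length-two monochromatic path. Exploiting the full strength of the P\'osa-type condition to show that $J_c$ has bounded chromatic number, or equivalently admits a bounded path cover, is where we expect the technical heart of the argument to lie; this likely requires iterating the P\'osa rotation-extension method within each colour while carefully accounting for the residual degree sequence, in the spirit of recent P\'osa-type extensions of the regularity method.
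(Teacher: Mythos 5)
This statement is one of the open conjectures in Section~\ref{sec:remarks}; the paper contains no proof of it (and records that even for $r=2$ only an approximate version is currently known). So there is no proof of the paper's to compare yours against, and your proposal has to stand on its own. It does not: quite apart from the step you yourself flag as missing (covering each $L_c$ by $O_r(1)$ monochromatic paths), two of the steps you treat as routine are false.

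The most serious is the claim that the residual graph $G'$ ``comfortably'' satisfies the hypothesis of~\cite{KLLP20}. The condition $d_i\ge i+\beta n$ permits $|L|$ to be as large as about $(\tfrac12-\tfrac{\beta}{2})n$, and it says nothing about where the neighbours of a high-degree vertex lie. Concretely, let $L$ be an independent set of about $(\tfrac12-\tfrac{\beta}{2})n$ vertices, each joined to all but one vertex of $V(G)\setminus L$, and let $G[V(G)\setminus L]$ be a disjoint union of cliques of size about $\tfrac32\beta n$. After a trivial adjustment of constants this satisfies the P\'osa condition, every $L$-vertex has degree below $(\tfrac12+\tfrac{\beta}{2})n$, and yet $G-L$ has minimum degree only about $3\beta\,|V(G')|$ and is in fact disconnected into $\Theta(1/\beta)$ pieces. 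The degree-sequence hypothesis simply does not localise to the high-degree part once the low-degree vertices are removed, so the second phase of your plan collapses for small $\beta$. The first phase can fail too: a vertex $v\in L$ with $\deg(v)$ close to $\beta n$ may have \emph{all} of its neighbours inside $L$ (the P\'osa condition allows up to roughly $(\tfrac12-\beta)n$ such vertices), in which case $v$ is isolated in every auxiliary graph $J_c$ and cannot be stitched to $V(G)\setminus L$ by short paths at all. Two further points: even where the stitching does work, the independence number of $J_c$ is bounded only by $O(r/\beta)$ (and it is bounded independence number, via Gallai--Milgram, not bounded chromatic number, that yields a path cover), so the resulting number of cycles would depend on $\beta$, which the conjecture explicitly forbids; and closing each monochromatic path into a cycle of its own colour is not a routine application of Lemma~\ref{lem:connecting-path}, since the two endpoints need not lie in a common monochromatic component of what remains. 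As it stands, the proposal is a heuristic outline whose load-bearing steps either are unproved or fail on concrete examples.
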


Finally, recent contributions in extremal combinatorics have been concerned with finding sufficient conditions for spanning substructures that are in some sense `closer' to being necessary than assumptions on the minimum degree~\cite{CKL+16,HJL22,KM15,LS21}.
It would be very interesting to determine whether similarly abstract properties can be identified for monochromatic cycle partitioning, although we do not have a concrete suggestions of how such properties may look like.


\bibliographystyle{amsplain}
\bibliography{monochromatic-cover}

\providecommand{\bysame}{\leavevmode\hbox to3em{\hrulefill}\thinspace}
\providecommand{\MR}{\relax\ifhmode\unskip\space\fi MR }
\providecommand{\MRhref}[2]{%
  \href{http://www.ams.org/mathscinet-getitem?mr=#1}{#2}
}
\providecommand{\href}[2]{#2}
\begin{thebibliography}{10}

\bibitem{All08}
P.~Allen, \emph{Covering two-edge-coloured complete graphs with two disjoint
  monochromatic cycles}, Combin. Probab. Comput. \textbf{17} (2008), 471--486.

\bibitem{blowup}
P.~{Allen}, J.~{B{\"o}ttcher}, H.~{H{\`a}n}, Y.~{Kohayakawa}, and Y.~{Person},
  \emph{{Blow-up lemmas for sparse graphs}}, arXiv:1612.00622, 2016.

\bibitem{Arras}
P.~Arras, \emph{{O}re- and {P\'o}sa-type conditions for partitioning
  2-edge-coloured graphs into monochromatic cycles}, arXiv:2202.06388, 2022.

\bibitem{BBG+14}
J.~Balogh, J.~Bar\'{a}t, D.~Gerbner, A.~Gy\'arf\'as, and G.~N. S\'ark\"ozy,
  \emph{Partitioning 2-edge-colored graphs by monochromatic paths and cycles},
  Combinatorica \textbf{34} (2014), no.~5, 507--526.

\bibitem{BS16}
J.~Bar\'{a}t and G.~N. S\'{a}rk\"{o}zy, \emph{Partitioning $2$-edge-colored
  ore-type graphs by monochromatic cycles}, J. Graph Theory \textbf{81} (2016),
  no.~4, 317--328.

\bibitem{BT10}
S.~Bessy and S.~Thomass\'e, \emph{Partitioning a graph into a cycle and an
  anticycle: a proof of {L}ehel{'s} conjecture}, J. Combin. Theory Ser. B
  \textbf{100} (2010), 176--180.

\bibitem{Chvatal}
V.~Chv\'{a}tal, \emph{On {H}amilton's ideals}, J. Combin. Theory Ser. B
  \textbf{12} (1972), 163--168.

\bibitem{CKL+16}
B.~Csaba, D.~K\"{u}hn, A.~Lo, D.~Osthus, and A.~Treglown, \emph{Proof of the
  1-factorization and {H}amilton decomposition conjectures}, Mem. Amer. Math.
  Soc. \textbf{244} (2016), no.~1154, v+164.

\bibitem{DN17}
L.~DeBiasio and L.~Nelsen, \emph{Monochromatic cycle partitions of graphs with
  large minimum degree}, J. Combin. Theory Ser. B \textbf{122} (2017),
  634--667.

\bibitem{ESSS17}
M.~Elekes, D.~T. Soukup, L.~Soukup, and Z.~Szentmikl{\'o}ssy,
  \emph{Decompositions of edge-colored infinite complete graphs into
  monochromatic paths}, Discrete Math. \textbf{340} (2017), no.~8, 2053--2069.

\bibitem{EGP91}
P.~Erd\H{o}s, A.~Gy\'arf\'as, and L.~Pyber, \emph{Vertex coverings by
  monochromatic cycles and trees}, J. Combin. Theory Ser. B \textbf{51} (1991),
  90--95.

\bibitem{GLL+22}
F.~Garbe, R.~Lang, A.~Lo, R.~Mycroft, and N.~Sanhueza-Matamala, \emph{{L}ehel's
  conjecture for $3$-uniform hypergraphs}, in preparation.

\bibitem{Gya83}
A.~Gy{\'a}rf{\'a}s, \emph{Vertex coverings by monochromatic paths and cycles},
  J. Graph Theory \textbf{7} (1983), 131--135.

\bibitem{Gya16}
\bysame, \emph{Vertex covers by monochromatic pieces -- a survey of results and
  problems}, Discrete Math. \textbf{339} (2016), no.~7, 1970--1977.

\bibitem{GRSS06}
A.~Gy{\'a}rf{\'a}s, M.~Ruszink{\'o}, G.~N. S{\'a}rk{\"o}zy, and
  E.~Szemer{\'e}di, \emph{An improved bound for the monochromatic cycle
  partition number}, J. Combin. Theory Ser. B \textbf{96} (2006), 855--873.

\bibitem{Hax97}
P.~E. Haxell, \emph{Partitioning complete bipartite graphs by monochromatic
  cycles}, J. Combin. Theory Ser. B \textbf{69} (1997), 210--218.

\bibitem{HJL22}
E.~Hurley, F.~Joos, and R.~Lang, \emph{Sufficient conditions for perfect
  tilings}, arXiv:2201.03944, 2022.

\bibitem{KM15}
P.~Keevash and R.~Mycroft, \emph{A geometric theory for hypergraph matching},
  Mem. Amer. Math. Soc. \textbf{233} (2015), no.~1098, vi+95.

\bibitem{KSS97}
J.~Koml\'os, G.~N. S\'ark\"ozy, and E.~Szemer\'edi, \emph{Blow-up lemma},
  Combinatorica \textbf{17} (1997), no.~1, 109--123.

\bibitem{KS96}
J.~Koml\'os and M.~Simonovits, \emph{Szemer\'edi's regularity lemma and its
  applications in graph theory}, Combinatorics, {P}aul {E}rd\H os is eighty,
  {V}ol.\ 2 ({K}eszthely, 1993), Bolyai Soc. Math. Stud., vol.~2, J\'anos
  Bolyai Math. Soc., Budapest, 1996, pp.~295--352.

\bibitem{KLLP20}
D.~Kor{\'a}ndi, R.~Lang, S.~Letzter, and A.~Pokrovskiy, \emph{Minimum degree
  conditions for monochromatic cycle partitioning}, J. Combin. Theory Ser. B
  \textbf{146} (2021), 96--123.

\bibitem{KMN+18}
D.~Kor\'{a}ndi, F.~Mousset, R.~Nenadov, N.~\v{S}kori\'{c}, and B.~Sudakov,
  \emph{Monochromatic cycle covers in random graphs}, Random Structures
  Algorithms \textbf{53} (2018), no.~4, 667--691.

\bibitem{KO14}
D.~K{\"u}hn and D.~Osthus, \emph{Hamilton cycles in graphs and hypergraphs: an
  extremal perspective}, Proc. ICM 2014 (Seoul, Korea), vol.~4, 2014,
  pp.~381--406.

\bibitem{LL21}
R.~Lang and A.~Lo, \emph{Monochromatic cycle partitions in random graphs},
  Comb. Probab. Comput. \textbf{30} (2021), no.~1, 136--152.

\bibitem{LS21}
R.~Lang and N.~Sanhueza-Matamala, \emph{Sufficient conditions for
  {H}amiltonicity in dense graphs}, arXiv:2110.14547, 2021.

\bibitem{LSS17}
R.~Lang, O.~Schaudt, and M.~Stein, \emph{Almost partitioning a 3-edge-coloured
  {$K_{n,n}$} into 5 monochromatic cycles}, SIAM J. Discrete Math \textbf{31}
  (2017), no.~2, 1374--1402.

\bibitem{LS17}
R.~Lang and M.~Stein, \emph{Local colourings and monochromatic partitions in
  complete bipartite graphs}, European J. Combin. \textbf{60} (2017), 42--54.

\bibitem{Let19}
S.~Letzter, \emph{Monochromatic cycle partitions of 2-coloured graphs with
  minimum degree {$3n/4$}}, Electron. J. Combin. \textbf{26} (2019), no.~1,
  Paper No. 1.19, 67.

\bibitem{Let20}
\bysame, \emph{Monochromatic connected matchings in almost complete graphs},
  arXiv:2009.03119, 2020.

\bibitem{Lov79}
L.~Lov\'{a}sz, \emph{Combinatorial problems and exercises}, North-Holland
  Publishing Co., Amsterdam-New York, 1979.

\bibitem{LRS98}
T.~{\L}uczak, V.~R\"odl, and E.~Szemer\'edi, \emph{Partitioning two-coloured
  complete graphs into two monochromatic cycles}, Combin. Probab. Comput.
  \textbf{7} (1998), 423--436.

\bibitem{Ore60}
O.~Ore, \emph{Note on {H}amilton circuits}, Amer. Math. Monthly \textbf{67}
  (1960), 55.

\bibitem{Pok14}
A.~Pokrovskiy, \emph{Partitioning edge-coloured complete graphs into
  monochromatic cycles and paths}, J. Combin. Theory Ser. B \textbf{106}
  (2014), 70--97.

\bibitem{Pok16}
\bysame, \emph{Partitioning a graph into a cycle and a sparse graph},
  arXiv:1607.03348, 2016.

\bibitem{Rad78}
R.~Rado, \emph{Monochromatic paths in graphs}, Ann. Discrete Math. \textbf{3}
  (1978), 191--194.

\bibitem{Sar11}
G.~N. S{\'a}rk{\"o}zy, \emph{Monochromatic cycle partitions of edge-colored
  graphs}, J. Graph Theory \textbf{66} (2011), 57--64.

\bibitem{Sch03}
A.~Schrijver, \emph{Combinatorial optimization. {P}olyhedra and efficiency.
  {V}ol. {B}}, Algorithms and Combinatorics, vol.~24, Springer-Verlag, Berlin,
  2003.

\bibitem{Sze76}
E.~Szemer\'edi, \emph{Regular partitions of graphs}, Probl\`emes combinatoires
  et th\'eorie des graphes ({C}olloq. {I}nternat. {CNRS}, {U}niv. {O}rsay,
  {O}rsay, 1976), Colloq. Internat. CNRS, vol. 260, CNRS, Paris, 1978,
  pp.~399--401.

\end{thebibliography}

\end{document}